\newtheorem{thm}{Theorem}[section]
\newtheorem{cor}[thm]{Corollary}
\newtheorem{lem}[thm]{Lemma}
\newtheorem{clm}{Claim}
\newtheorem{prop}[thm]{Proposition}
\theoremstyle{definition}
\newtheorem{defn}[thm]{Definition}
\numberwithin{equation}{section}
\newcommand{\mb}[1]{\mathbb{#1}}
\newcommand{\tb}[1]{\textbf{#1}}
\newcommand{\mf}[1]{\mathfrak{#1}}
\newcommand{\infgen}[1]{\langle\langle{#1}\rangle\rangle}
\newcommand{\hes}{\mathcal{H}}
\newcommand{\diam}{\operatorname{diam}}
\newcommand{\im}{\operatorname{im}}
\renewcommand{\phi}{\varphi}
\renewcommand{\int}{\operatorname{int}}
\begin{document}

%%%%% To ease editing, for IMPAN journals add:

\baselineskip=17pt

%%%%%%%%%%%

%% In the running head, replace first names by initials
%% and give an abbreviation of the title.

%\title[Planar Continua]{Concerning Fundamental Groups of Locally Connected Subsets of the Plane}

%\author[G.R. Conner]{Gregory Conner}
%\address{G.R. Conner\\ Department of Mathematics\\ Brigham Young University\\292-A TMCB\\
%Provo UT 84602; USA}
%\email{conner@math.byu.edu}

%\author[C. Kent]{Curtis Kent}
%\address{C. Kent\\ Department of Mathematics\\ Vanderbilt University\\Stevenson Center 1212
%Nashville TN 37240; USA}
%\email{curt.kent@vanderbilt.edu}

\date{}
\title{Fundamental groups of locally connected subsets of the plane}
\author{Greg Conner and Curt Kent}
\maketitle

\begin{abstract}
We show that every homomorphism from a one-dimensional Peano continuum to a planar Peano continuum is induced by a continuous map up to conjugation. We then prove that the topological structure of the space of points at which a planar Peano continuum is not semi-locally simply connected is determined solely by the algebraic structure of its fundamental group.  Furthermore, we demonstrate how to reconstruct the topological structure of the space of points at which a planar Peano continuum is not semi-locally simply connected using only the subgroup lattice of its fundamental group.
\end{abstract}

%\subjclass[2010]{Primary 57N05; Secondary 55P10, 54E45,20F34}

%\keywords{Peano continuum, fundamental group, planar}

\section*{Introduction }

The Hawaiian earring is the one-point compactification of countably many disjoint open arcs. Its fundamental group, which we will denote by $\mathbb H$, is called the Hawaiian earring group.

The fundamental groups of planar and one-dimensional Peano continua have been studied extensively.  In \cite{hig}, G. Higman studies the inverse limit of finite rank free groups which he calls  the unrestricted free product of countably many copies of $\mb Z$. There he proves that this group is not free and that each of its free quotients has finite rank. He considers a subgroup $P$ of the unrestricted product which turns out to be a Hawaiian earring group but he does not prove it there.  In \cite{cf}, Curtis and Fort show that the Menger curve has fundamental group which embeds in the inverse limit of finite rank free groups.  As well in \cite{mm}, Morgan and Morrison study the Hawaiian earring group.  Cannon and Conner in \cite{cc1}and \cite{cc3} studied the fundamental groups of one-dimensional spaces. In \cite{at}, Akiyama, Dorfer, Thuswaldner, and Winkler studied the fundamental group of the Sierpinski-Gasket.

Katsuya Eda \cite{eda2} was the first to prove results concerning homomorphisms between the fundamental groups of one-dimensional Peano continua.  He proved that every homomorphism from $\mb H$ to $\mb H$ is conjugate to a homomorphism which is induced by a continuous map. Summers gave a combinatorial proof of the same result in \cite{cs}. In \cite{eda}, Eda was able to extend his original result to show that a homomorphism from $\mb H$ to the fundamental group of a one-dimensional metric space is conjugate to a homomorphism which is induced by a continuous map.

In Section \ref{sec1}, we prove that homomorphisms from $\mb H$ to the fundamental group of a planar Peano continuum are conjugate to homomorphisms induced by continuous maps.  This is then used to prove the following result.

\textbf{Theorem \ref{cont1dim}} \emph{ Let $X$ be a one-dimensional Peano continuum and $Y$ a planar Peano continuum.  Then for every homomorphism $\phi: \pi_1(X,x)\to \pi_1(Y,y_0)$ then there exists a path $\alpha: (I,0,1)\to (Y,y_0,y)$ and a continuous function $f:X\to Y$ such that $\hat \alpha\circ\phi = f_*$.}

Eda \cite{eda3} proved the analogue of Theorem \ref{cont1dim} when both $X$ and $Y$ are one-dimensional Peano continua.  In Section \ref{sec2}, we will show the necessary modifications of his proof to be able to extend it to the case when $Y$ is planar.  The second author has also shown that the theorem holds when $X$ is allowed to be a planar Peano continuum and $Y$ is either a planar or one-dimensional Peano continuum \cite{ck1}.

In \cite{eda}, Eda showed that the fundamental group of a one-dimensional continuum which is not semilocally simply connected at any point determines the homeomorphism type of the space.  Eda and Conner show that the fundamental group of a one-dimensional continuum which is not semilocally simply connected at any point can be used to reconstruct the original space with its topology \cite{ce}. We prove the following analogue for planar Peano continua.

\textbf{Theorem \ref{bad}} \emph{If $X$ is a planar Peano continuum, the
homeomorphism type of $B(X)$ is completely determined by $\pi_1(X,x_0)$,
where $B(X)$ is the subspace of points at which $X$ is not semilocally simply
connected.}

The proof that we present in Section \ref{sec3} is for a planar Peano continuum; however, with a few insignificant changes our proof works for both planar and one-dimensional Peano continua. Our proof shows that what is really necessary for a results of this type isn't the one-dimensionality of the spaces involved but the fact that essential loops are cannot be homotoped to much (see Lemma \ref{homotoped off} for an example of what this means in the planar case).  Thus types of results are possible for any class of spaces where essential loops have this property.

\section{Definitions}

A Peano continuum is a compact, connected, locally path connected, metric space.  For a metric space $(X,d)$, let $B_\epsilon^X(x_0) = \{x\in X \mid d(x, x_0)<\epsilon\}$ be the open $\epsilon$-ball about $x_0$ and $S_\epsilon^X(x_0) = \{x\in X \mid d(x, x_0)=\epsilon\}$ be the $\epsilon$-sphere about $x_0$.  When the underling space is understood, we will simply write $B_\epsilon(x_0)$ or $S_\epsilon(x_0)$.

The one-point compactification of a sequence of disjoint arcs can be realized in the plane as the union of circles centered at $(0,\frac1n)$ with radius $\frac1n$.  We will use $\textbf{E}$ to denote this subspace of the plane and $\mathbf{a}_n$ to denote the circle centered at $(0,\frac1n)$ with radius $\frac1n$.  Then $\pi_1(\textbf{E},(0,0)) = \mb H$. At times it will be convenient to consider certain subspaces of $\textbf{E}$ and certain subgroups of $\mb H$.  We will let $\textbf{E}_n = \bigcup\limits_{i\geq n} \mathbf{a}_i$, $\textbf{E}_e = \bigcup\limits_{i \text{ even}}\mathbf{a}_i$ and $\textbf{E}_o= \bigcup\limits_{i \text{ odd}}\mathbf{a}_i$.  Then let $\mb H_n$, $\mb H_e$, and $\mb H_o $ be their respective fundamental groups which are all isomorphic to $\mb H$.

Cannon and Conner have shown that $\mathbb H$ is generated in the sense of infinite products by a countable set of generators corresponding to the circles $\mathbf{a}_n$, where an infinite product is legal if each $\mathbf{a}_n$ is
transversed only finitely many times. For more details see \cite {cc1},\cite{cc3}. We will refer to this infinite generating set for the fundamental group of $\textbf{E}$ as $\{a_n\}$, i.e. $a_n$ represents the canonical path which transverses counterclockwise $\mathbf{a}_n$ one time.  We will frequently denote
the base point $(0,0)$ of $\textbf{E}$ by just $0$.

Let  $\{\mathbf b_n\}$ be a null sequence of loops based at $x_0$ in some metric space $X$.  Then any function $f: (\textbf{E},0)\to (X,x)$ which sends $\mathbf{a}_n$ continuously to $\mathbf b_n$ and sends $0$ to $x$ is continuous.  Often when defining functions from $\textbf{E}$ to $X$, we will only be interested in the property that $f_*(a_n)=[\mathbf b_n]$. Therefore to simplify the construction of such functions; when defining a function $f$ we will only state that $f$ maps $\mathbf{a}_n$ to $\mathbf b_n$ (for every $n$) meaning that $f$ maps $\mathbf{a}_n$ continuously to $ \mathbf b_n$ and sends $0$ to the base point of $ \mathbf b_n$ such that $f_*(a_n)=[\mathbf b_n]$.

We will use $[g]$ to represent the homotopy class relative to endpoints of the path $g$.  Then a continuous function $f$ induces an homomorphism on fundamental groups which we will denote by $f_*$. A loop \emph{$f$ is a reparametrization of $g$} if there exists continuous maps $\theta_1, \theta_2:[0,1]\to [0,1]$ such that $\theta_i$ is nondecreasing, $\theta_i(0)=0$ and $\theta_i(1)=1$ for $i=1,2$, and $f\circ\theta_1 = g\circ\theta_2$.  This is an equivalence relation and we will write $f=_pg$.

Suppose that $f_i:I\to X$ is a sequence of paths such that $f_i(1) = f_{i+1}(0)$.  Then $f_1*f_2$ will denote the standard concatenation of paths and $\prod_i f_i$ the infinite concatenation when it is defined.  We use $\overline f_1(t)$ to denote the path $\overline f_1(t) = f_1(1-t)$.  For a path $\alpha:(I,0,1)\to (X,x_0,x_1)$, let $\widehat \alpha: \pi_1(X, x_0) \to \pi_1(X, x_1)$ by the standard change of base point isomorphism, i.e. $\widehat \alpha([g]) = [\overline \alpha*g*\alpha]$.  This isomorphism has inverse $\widehat{\overline{\alpha}}$.

\begin{defn}
Let $X$ be a one-dimensional space.  We say that a path $f : I \to X$ is \emph{reducible} if there is an open arc $(x,y)\subset I$ such that $f|_{[x,y]}$ is a non-degenerate null-homotopic loop.  A loop $f$ is \emph{reduced} if it  is not reducible. A constant loop is, by definition reduced and every reparamerization  of a reduced path is reduced.  \end{defn}

\begin{thm}[Theorem 3.9 \cite{cc3}]\label{reducedloop-thm}
Let $f:I\to X$ be a continuous map into a one-dimensional space.  There exist a maximal (under set inclusion) set of open intervals $\bigl\{ (s_i, t_i)\bigr\}_{i\in J}$ with disjoint closures such that, for each $i\in J$, $f|_{[s_i,t_i]}$ is a null-homotopic subloop of $f$.

Define $f_r:I\to X$ by $f_r(t) = f(t)$ for $t\not\in \bigcup_{i\in J} (s_i, t_i)$ and $f_r(t) = f(s_i)$ for $t\in (s_i,t_i)$.  Then $f_r$ is the unique (up to reparametrization) reduced loop which is homotopic to $f$.
\end{thm}

We will use $f_r$ to denote the path where every null-homotopic subloop of $f$ is replaced by a constant path, in which case we will call $f_r$ a \emph{reduced representative} of $f$ or $[f]$.  When considering one-dimensional spaces, we will use $[\cdot]_r$ to denote a reduced representative of the path class of $[\cdot]$.

\begin{defn}Let $X$ be a one-dimensional space. Let $g:I\to X$ be a reduced representative for the path class $[g]$. Then we say that $a:I\to X$ is a \emph{head} for $g$ if there exists $b:I\to X $ such that $g =_p
a*b$. We will write $a \stackrel{h} \rightarrow g$. Additionally, we say that $b:I\to X$ is a \emph{tail} for $g$ if there exist $c:I\to X$ such that $g =_p c*\overline b$.  We will write $b\stackrel {t}\rightarrow g $.

We say that $t:I\to X$ is a \emph{head-tail} for a reduced path $g:I\to X$ if there exists $b:I\to X $ such that $g =_p
t*g*\overline t$, i.e. $t$ is a head and a tail for $g$.  We will write $t\stackrel{h-t} \longrightarrow g$.
\end{defn}

Since $g$ is a reduced path; the paths $a$, $b$, and $c$ are necessarily reduced paths.

\section{Homomorphisms from $\mb H$}\label{sec1}

We will use the following theorem for one-dimensional spaces.

\begin{thm}[Eda \cite{eda}]\label{hto1d}
Let $\phi:\mathbb H \to\pi_1(X,x_0)$ a homomorphism into the fundamental group of a one-dimensional Peano continuum $X$. Then there exists a continuous function $f:(\textbf{E},0) \to (X,x_1)$ and a path $T:(I,0,1)\to (X,x_0,x_1)$, with the property that $f_* =\widehat T \circ\phi$.  Additionally; if the image of $\phi$ is uncountable, then $T$ is unique up to homotopy rel endpoints.
\end{thm}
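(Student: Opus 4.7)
The plan is to construct $T$ and $f$ by analyzing the reduced loop representatives of $\phi([a_n])$, exploiting existence and uniqueness of reduced paths in one-dimensional spaces (Cannon-Conner). For each $n$, let $g_n$ be the reduced loop at $x_0$ representing $\phi([a_n])$. If the $g_n$ already formed a null sequence at $x_0$, one could take $T$ constant and define $f$ by sending $\mathbf{a}_n$ to $g_n$ using the convention from the Definitions section. In general the $g_n$ may share a long common initial segment whose length does not shrink, so we must conjugate by a carefully chosen path $T$, which should be a head-tail of $g_n$ for all sufficiently large $n$.

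To produce $T$, I would exploit the tree-like organization of reduced paths in a one-dimensional space. For each $\varepsilon > 0$, I expect there to be a path $T_\varepsilon$ starting at $x_0$ such that for all sufficiently large $n$, the loop $g_n$ admits a reduced factorization $T_\varepsilon * h_n^\varepsilon * \overline{T_\varepsilon}$ with $\diam(h_n^\varepsilon) < \varepsilon$; as $\varepsilon \to 0$ the $T_\varepsilon$ should stabilize into a well-defined path $T$ from $x_0$ to a limit point $x$. Setting $h_n = [\overline{T} * g_n * T]_r$ then yields a null sequence of loops at $x$. With $\{h_n\}$ in hand, define $f:(\textbf{E},0)\to(X,x)$ by sending $\mathbf{a}_n$ to $h_n$, so that $f_*(a_n) = \widehat{T}\circ\phi(a_n)$ by construction, and extend the identity $f_* = \widehat{T}\circ\phi$ to all of $\mb H$ by compatibility with infinite products of generators.

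The main obstacle is verifying the null-sequence property of the $h_n$, and this is where $\phi$ being a homomorphism together with one-dimensionality enters decisively. For each $k$, the image $\phi\bigl(\prod_{i \geq k} a_i\bigr)$ is a single element of $\pi_1(X,x_0)$ with a unique reduced representative; if a subsequence of the $h_n$ had diameters bounded below, the concatenation of the corresponding $g_n$ could not reduce to a single path representative, contradicting existence of that reduced representative. For the uniqueness clause, when $\im(\phi)$ is uncountable one has uncountably many distinct reduced loops $g_n$ present in the construction, and the common head-tail $T$ is then forced to be maximal: any alternative conjugating path $T'$ would have to agree with $T$ on arbitrarily long initial segments in order to simultaneously strip off cofinitely many $g_n$, which pins $T$ down up to homotopy rel endpoints. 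The heart of the argument is thus the interaction between the infinite-product structure of $\mb H$ and the rigidity of reduced paths in one-dimensional spaces.
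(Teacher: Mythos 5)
The paper offers no proof of this statement: it is quoted from Eda \cite{eda} and used as a black box (the paper's original contribution is the planar analogue, Theorem \ref{contplanar}). Judged on its own, your sketch identifies the right ingredients but has two genuine gaps. The first is the null-sequence step. You argue that if a subsequence of the conjugated loops $h_n$ had diameters bounded below, then ``the concatenation of the corresponding $g_n$ could not reduce to a single path representative'' of $\phi\bigl(\prod_{i\ge k}a_i\bigr)$. But $\phi$ only respects finite products, so the sole a priori link between $W_k:=[\phi(\prod_{i\ge k}a_i)]_r$ and the $g_n$ is the telescoping identity $W_k=(g_k*\cdots*g_m*W_{m+1})_r$. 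The tail $W_{m+1}$ is an unknown, potentially enormous path, and nothing prevents it from cancelling all of $g_m$, then $g_{m-1}$, and so on; indeed $W_{m+1}=(\overline{g_m}*W_m)_r$ holds identically, so ``large $g_n$ with uniformly bounded $W_k$'' is not formally contradictory. Forcing the essential cores of the $g_n$ to survive reduction is the real content of the theorem; in the literature (and in this paper's own Lemma \ref{key} for the planar analogue) one tests $\phi$ on elements $\prod_n a_{i_n}^{r_n+s_n}$ with exponents chosen large relative to weights of the already-built prefix and of $T$, and controls cancellation via Lemmas \ref{toobig1} and \ref{toobig2}. This cannot be recovered from the elements $\prod_{i\ge k}a_i$ alone, and your sketch omits it.

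The second gap is the final step, ``extend the identity $f_*=\widehat T\circ\phi$ to all of $\mathbb H$ by compatibility with infinite products,'' which is circular. The $a_n$ generate only a countable free subgroup of $\mathbb H$, and two homomorphisms out of $\mathbb H$ agreeing on every $a_n$ need not coincide in general ($\mathbb H$ has nontrivial quotients killing every $a_n$, e.g.\ via $\mathbb H\to\mathbb Z^{\mathbb N}\to\mathbb Z^{\mathbb N}/\bigoplus\mathbb Z$); agreement on all of $\mathbb H$ follows only once one knows $\widehat T\circ\phi$ itself respects infinite products, which is essentially the conclusion being proved. What is needed is the stronger statement that for every neighborhood $U$ of $x$ there is an $N$ such that \emph{every} element of $\widehat T\circ\phi(\mathbb H_N)$ --- not merely the generators --- has a representative in $U$; this is exactly the form of the lemma the paper establishes just before Theorem \ref{contplanar}. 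Finally, the existence and stabilization of the paths $T_\varepsilon$ is only asserted (``I expect''); that part is more routine, but it too depends on first controlling the cores.
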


\subsection{Delineation}\label{delin}

To prove Theorem \ref{contplanar}, we will use an upper semicontinuous decomposition of the planar Peano continuum to get a continuous map into a one-dimensional Peano continuum which is injective on fundamental groups. This allows us to, in some degree, reduce the planar case to the one-dimensional case.  If $\pi_k$ is this decomposition map, we show how to lift the path $T$ such that $\widehat T\circ\pi_{k*}\circ\phi$ is induced be a continuous map. Then for $\alpha$ a lift of $T$, we show that $\widehat \alpha \circ \phi$ is induced by a continuous function.

\begin{defn}
Let $k$ be a line in the plane and $X$ a planar set. Let $\pi_k: X \to X/G$ be a decomposition map where the nontrivial decomposition elements of $G$ are the maximal line segments contained in $X$ which are parallel to $k$.  We will use $X_k$ to denote the decomposition space corresponding to $\pi_k$ and refer to $\pi_k$ as a \emph{delineation map}.
\end{defn}

The following is Theorem 1.4 in \cite{cc4}.

\begin{thm}[Cannon \& Conner]
If $X$ is a planar set and $k$ is any line in the plane; then $\pi_k$ is upper semicontinuous and $X_k$ is a one-dimensional Peano continuum.  In addition; if $X$ is a planar Peano continuum, then the induced homomorphism of fundamental groups is injective.

\end{thm}

Let $h: C\to D$ be a function.  Then a subset $C'$ of $C$ is \emph{$h$-saturated} if $C' = h^{-1}(D')$ for some $D'\subset D$.

\begin{lem}\label{continuous maps}
Let $X$ be a planar set and $\pi_k: X \to X_k$, $\pi_l: X \to X_l$ be delineation maps for non-parallel lines $k,l$ respectively.

If $f:Z\to X$ is a map from a first countable space $Z$ such that $\pi_k\circ f$ and $\pi_l\circ f$ are continuous, then $f$ is continuous.
\end{lem}

\begin{proof}
Let $k, l$ be non-parallel lines in the plane with slopes $m_k,m_l$ respectively.  Let $U$ be an open parallelogram in the plane with sides parallel to  $k$ and $l$.
Chose $c_{k,1}< c_{k,2}$ and $c_{l,1}< c_{l,2}$ such that $U = U_k\cap U_l$ where $U_k =\bigl\{ (x,y) \mid m_kx + c_{k,1} \leq y\leq m_kx + c_{k,2}\}$ and $U_l =\bigl\{ (x,y) \mid m_lx + c_{l,1} \leq y\leq m_lx + c_{l,2}\}$.

Since $U_k\cap X$ is $\pi_k$-saturated and open in $X$, $\pi_k(U_k\cap X)$ is open in $X_k$.  Similarly, $\pi_l(U_l\cap X)$ is open in $X_l$.  Then

\begin{align*}
 f^{-1} (U \cap X) &= f^{-1}(U_k\cap X)\cap f^{-1}(U_l\cap X)\\
&= \bigl(\pi_k\circ f\bigr)^{-1}\bigl( \pi_k(U\cap X)\bigr)\cap\bigl(\pi_l\circ f\bigr)^{-1}\bigl( \pi_l(U\cap X)\bigr)
\end{align*}

which is open since $\pi_k\circ f$ and $\pi_l\circ f$ are continuous.

This completes the proof since such sets from a basis for the topology of $X$.
\end{proof}

\begin{cor}\label{continuous maps-corrollary}
Let $X$ be a planar set and $\pi_k: X \to X_k$, $\pi_l: X \to X_l$ be delineation maps for non-parallel lines $k,l$ respectively.

A  sequence $x_n$ of points in $X$ converges to $x_0$ if and only if  $\pi_k(x_n)$ and $\pi_l(x_n)$ converge to $\pi_k(x_0)$ and $\pi_l(x_0)$, respectively.

\end{cor}

\begin{proof}
The one direction is trivial since delineation maps are continuous.  Suppose that $\pi_k(x_n)$ and $\pi_l(x_n)$ converge to $\pi_k(x_0)$ and $\pi_l(x_0)$, respectively.  Let $Z= \{0\}\cup\{ \frac 1n\}$ and $f: Z\to X$ such that $f(\frac 1i) = x_i$ and $f(0) = x_0$.  Then $\pi_k\circ f$ and $\pi_l\circ f$ are continuous.  Hence by $f$ is continuous and $x_n $ converges to $x_0$.

\end{proof}

\begin{lem} \label{kreduced}
Let $X$ be a planar Peano continuum.  If $g:I\to X$ is a path and $\pi_k\circ g$ has reduced representative $f$ then there exists $\tilde g:I\to X$ such that $\pi_k\circ \tilde g =_p f$ and $g$ is homotopic rel-endpoints to $\tilde g$.
\end{lem}

\begin{proof}  If $\pi_k\circ g$ is reduced, we are done.  Otherwise there exists an interval $[c,d]$ such that $\pi_k\circ g\bigl|_{[c,d]}$ is a non-constant null-homotopic subloop.  Then $\pi_k\circ g(c) = \pi_k\circ g(d)$ which implies that the line segment $\overline{g(c)g(d)}$ is contained in $X$. The loop $g\bigl|_{[c,d]}*\overline{g(d)g(c)}$ maps to $\pi_k\circ
g\bigl|_{[c,d]}$ and hence must be null-homotopic since $\pi_{k*}$ is injective. Therefore $g$ is homotopic to $ g'$ where the subpath $g\bigl|_{[c,d]}$ is replaced by the line segment $\overline{g(c)g(d)}$.

Suppose that $\bigl\{J_i = (c_i,d_i)\bigr\}$ is a maximal set of disjoint open subintervals of $I$ such that $\pi_k\circ g\bigl|_{\overline{J_i}}$ is a non-constant null-homotopic loop.  Let $l_i$ be a parametrization of the line segment from $g(c_i)$ to $g(d_i)$.  Let  $\tilde g$ be the path $g$ where each subpath $g|_{[c_i,d_i]}$ is replace by $l_i$. Then $\tilde g$ defines a continuous path such that $g(t) =\tilde g(t)$ for $t\not\in \cup_i J_i$.

We need to show that $g$ is homotopic to $\tilde g$.  Since $g$ is uniformly continuous, $\diam{\bigr(g \bigl|_{[c_i,d_i]}\bigl )}$ must converge to zero.

By Lemma \ref{cutoff}, there exists homotopies $H_i:I\times[c_i,d_i] \to X$ with the property that $H_i\bigr|_{\{0\}\times[c_i,d_i]} = g\bigr|_{[c_i,d_i]}$, $H_i\bigr|_{\{1\}\times[c_i,d_i]} = l_i$, and $\diam{H_i(I\times[c_i,d_i])} \to 0$.

Lemma \ref{aalem1} implies that $g$ is homotopic to $\tilde g$.  By our choice of $\{J_i\}$, $\pi_k\circ\tilde g$ is reduced which completes the proof.
\end{proof}

\begin{defn}If $g$ maps to a reduced path under $\pi_k$, we will say that $g$ is reduced with respect to $k$ or $g$ is $k$-reduced.  For any path $g$; if $\tilde g$ is obtained from $g$ as in Lemma \ref{kreduced} then we will say $\tilde g$ is obtained by reducing $g$ with respect to $k$.\end{defn}

\begin{lem}\label{freely_k_reduced-lem}
If $g:S^1\to X$ is a loop and $\pi_k\circ g$ has cyclically reduced representative $f$ then there exists $\tilde g:S^1\to X$ such that $\pi_k\circ \tilde g =_p f$ and $g$ is freely homotopic to $\tilde g$.
\end{lem}

\begin{proof}
The proof is the same as in Lemma \ref{kreduced}, except we choose $\bigl\{J_i = (c_i,d_i)\bigr\}$ to be a maximal set of disjoint open subsets of $S^1$ which are homeomorphic to open intervals such that $\pi_k\circ g\bigl|_{\overline{J_i}}$ is a non-constant null-homotopic loop.
\end{proof}

\subsection{Weight Function}

Before we proceed, we need to introduce a weight function.  This weight function is a discrete version of the oscillation function defined by Cannon, Conner, and Zastrow in \cite{ccz}.  The discrete version was also used
in \cite{ck}, since it is preserved under nerve approximations.

\subsubsection{Weight function}
Let $X$ be a normal topological space. For a path $f: I\to X$ and $U, V$ disjoint open subsets of $X$,
let $r_f :f^{-1} (U\cup V) \to \{-1,1\}$ by $r_f (s) = 1$ if $f(s)\in U$ and $r_f (s) = -1$ if $f(s)\in V$. Let $\overline w^V_U (f)= \sup\big(- \sum\limits_i r_f(s_i)\cdot r_f(s_{i+1})\big)$ taken over all increasing countable subsets of $f^{-1} (U\cup V)$.  For any collection consisting of 0 or 1 point, we will consider the sum to be 0.

If the image of two consecutive points in our countable subset of $f^{-1} (U\cup V)$ are contained in the same open set, then the sum would increase by deleting one.  Thus the supremum is obtained by choosing an increasing sequence of points from $f^{-1} (U\cup V)$ whose image alternates between $U$ and $V$. Therefore $\overline w$ counts the number of times that the image of $f$ alternates between $U$ and $V$. If $f$ is continuous and $U$, $V$ have disjoint closures, then its image is compact and can only alternate between sets with disjoint closures finitely many times.  So the supremum is actually realized for some finite set of points. If $U'\subset U$ and $V'\subset V$, then $\overline w^{V'}_{U'} (f)\leq \overline w^V_U (f)$.

\begin{defn}
{The \emph{weight} of $f$ with respect to subsets $A$ and $B$ of
$X$ with disjoint closures is $w^A_B (f) = \inf \overline w^V_U
(f)$ taken over all possible separations $U$ and $V$ of
$\overline A$ and $\overline B$. If $[f]$ is a homotopy
equivalence class of functions, then $w^A_B([f]) =
\inf\limits_{f\sim f'}\{w^A_B (f')\}$.}
\end{defn}

Suppose that $\theta:I \to I$ is nondecreasing function such that $\theta(0)=0$ and $\theta(1)=1$.  Then  $\overline w^U_V(f) =\overline w^U_V(f\circ\theta)$.  Thus $\overline w$ is preserved under reparameterization.

If $f:I\to X$ is a map into a one-dimensional space and $f\bigl|_{\overline J_i}$ is a null-homotopic subloop.  Then $\overline w^U_V(f) \geq\overline w^U_V(f')$ where $f'$ is the path obtained by replacing the subpath  $f\bigl|_{\overline J_i}$ of $f$ by a constant subpath.  Thus we obtain $\overline w^U_V([f]) =\overline w^U_V(f_r)$.

The set $\{\overline w^U_V (f) \ | \ U, V \text{are a separation of } \overline A, \overline B\}$ is a subset of the natural numbers and hence has a minimum. Thus we may chose an open separation $U$, $V$ such that $w^A_B (f) = \overline w^V_U (f)$. For continuous $f$, $f^{-1}(U \cup V)$ can be partitioned into a finite collection of
disjoint open sets, $I_i$, in $I$ with a natural ordering ($I_i \leq I_j$ if $x\leq y $ for all $x\in I_i$ and $y\in I_J$) such that $f(I_i)\subset U$ or $f(I_i)\subset V$. If for some $i$, $f(I_i)\subset U\backslash \overline A$ (or $f(I_i)\subset V\backslash \overline B$), then there would exist an open set contained in $U$ (or $V$) containing $ \overline A$ (or $\overline B$)  which did not intersect $f(I_i)$ and thus alternate fewer times. Therefore, $f(I_i)$
must intersect $\overline A$ (or $\overline B$). So points which realize the weight can be chosen in the closures of $A$ and $B$. Thus there exists a finite increasing set of points $\{s_i\}$ which can be chosen to have image in the closures of $A$ and $B$ such that $w^A_B (f) = \overline w^V_U (f) = \sum\limits_i -r_f(s_i)\cdot r_f(s_{i+1})$.  We will sometimes write $w^A_B (f) = \sum\limits_i -r_f(s_i)\cdot r_f(s_{i+1})$.  This implicitly implies a choice of $U$ and $V$ to define $r_f$. However; if $\{s_i\}$ are chosen to have image in the closure of $A$ and $B$, $r_f(b_i)$ is the same for every choice of $U$ and $V$.  Thus we will ignore this choice at times.

%\begin{lem}\label{concatenation}
%Let $f: [0,2]\to X$ be a continuous function.   Then $w^{A}_{B}\bigl(f\bigr) \leq w^{A}_{B}\bigl(f|_{[0,1]}\bigr) +w^{A}_{B}\bigl(f|_{[1,2]}\bigr) +1$
%\end{lem}

\begin{lem}\label{preserve0}
Let $X$ be a planar Peano continuum and $k$ a line in the plane.  Suppose that $A$ and $B$ are disjoint closed $\pi_k$-saturated sets and $A_k$,$B_k$ their respective images under $\pi_k$. Then $w^A_B(g) = w^{A_k}_{B_k} (\pi_k\circ g)$.
\end{lem}

This follows directly from the fact that the weight can be realized by a finite set of points.

\begin{lem} \label{lem6}
If $g$ is an essential loop in a one-dimensional space $X$, there exist sets $O',O''$ with disjoint closures such that, for all $r$, $w^{O'}_{O''}([g]^r) \geq r$.  In addition; if $X = \pi_k(Y)$ for some planar Peano continuum $Y$ and some line $k$ in the plane, then $O'$ and $0''$ can be chosen to be the images of disjoint closed half planes of $Y$ with boundaries parallel to $k$.
\end{lem}

\begin{proof}  We may assume $g$ is a reduced path since the weight of the reduced path is less than or equal to
the weight of all paths in its homotopy class.

Since the set of head-tails for $g$ has a natural ordering which is bounded there exists a maximal head-tail, $t$ for $g$ where $g = t*f*\overline t$ such that $f*f$ is reduced. Hence $w^A_B\bigl([g]^r\bigr)\geq w^A_B\bigl([f]^r\bigr) = r\Bigl(w^A_B\bigl([f]\bigr)\Bigr)$ for all $A$ and $B$. Since $g$ is essential, $f$ is essential which implies $f$ is a non-degenerate path. Then for any two distinct points $O'$ and $O''$ in the image of $f$, we have $w^{O'}_{O''}\bigl([f]\bigr)\neq 0$.  Thus $w^{O'}_{O''}\bigl([g]^r\bigr)\geq r\Bigl(w^{O'}_{O''}\bigl([f]\bigr)\Bigr) \geq r$, for any $r$.

Suppose that $X = \pi_k(Y)$ for some planar Peano continuum $Y$ and some line $k$ in the plane.  Then $\pi_k^{-1}(O'), \pi_k^{-1}(O'')$ are disjoint line segments in the plane.  Hence there exits disjoint closed half planes $A, B$ such that $\pi_k^{-1}(O')\subset A$ and $ \pi_k^{-1}(O'')\subset B$.  Thus $w^{\pi_k(A)}_{\pi_k(B)}\bigl([g]^r\bigr)\geq r\Bigl(w^{\pi_k(A)}_{\pi_k(B)}\bigl([f]\bigr)\Bigr)\geq r$, for any $r$.
\end{proof}

For $f$ and $g$ as in the proof of Lemma \ref{lem6}, we will say that $f$ is the \emph{core} of $g$.

\begin{lem}{ \label{preserve}
Let $X$ be a planar Peano continuum and $k$ a line in the plane.  The delineation map, $\pi_{k}$, preserve weights of homotopy classes on disjoint closed saturated sets, i.e. $w^A_B([g]) = w^{A_k}_{B_k} ([\pi_k\circ g])$.}
\end{lem}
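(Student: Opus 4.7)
The plan is to upgrade the preceding path-level lemma ($w^A_B(g) = w^{A_k}_{B_k}(\pi_k\circ g)$) to the level of homotopy classes, using Lemma \ref{kreduced} together with the one-dimensionality of $X_k$.

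I would first dispatch the inequality $w^A_B([g]) \geq w^{A_k}_{B_k}([\pi_k\circ g])$ directly. For any representative $g' \simeq g$ rel endpoints, the composite $\pi_k\circ g'$ is a representative of $[\pi_k\circ g]$, and the previous lemma gives $w^A_B(g') = w^{A_k}_{B_k}(\pi_k\circ g') \geq w^{A_k}_{B_k}([\pi_k\circ g])$. Taking the infimum over such $g'$ yields the desired inequality.

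For the reverse inequality, I would exploit the fact that $X_k$ is one-dimensional. The observation noted just after the definition of $\overline w$ (replacing a nullhomotopic subloop by a constant subpath can only decrease $\overline w$) implies that in the one-dimensional target $X_k$ the infimum defining $w^{A_k}_{B_k}([\pi_k\circ g])$ is realized by the reduced representative $f$ of $\pi_k\circ g$. Lemma \ref{kreduced} then produces a path $\tilde g : I \to X$ with $\pi_k\circ \tilde g = f$ up to reparametrization, and its proof shows that $\tilde g$ is homotopic rel endpoints to $g$. Combining the path-level lemma with reparametrization invariance of $\overline w$, I get
\[
w^A_B([g]) \leq w^A_B(\tilde g) = w^{A_k}_{B_k}(\pi_k\circ\tilde g) = w^{A_k}_{B_k}(f) = w^{A_k}_{B_k}([\pi_k\circ g]),
\]
which completes the other direction.

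The only real content lies in producing a lift $\tilde g$, in the homotopy class of $g$, whose image under $\pi_k$ is exactly the reduced representative $f$; this is precisely what Lemma \ref{kreduced} provides, and it succeeds because the injectivity of $\pi_{k*}$ (Cannon--Conner, Theorem 1.4 of \cite{cc4}) allows every nullhomotopic subloop of $\pi_k\circ g$ to be filled in $X$ by a line segment parallel to $k$. Every other step is either the preceding path-level lemma or a reparametrization bookkeeping observation, so I expect the full proof to be essentially a one-paragraph combination of these two inputs.
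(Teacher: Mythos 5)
Your proof is correct and follows essentially the same route as the paper's: both directions combine the path-level equality $w^A_B(g')=w^{A_k}_{B_k}(\pi_k\circ g')$ with the lift $\tilde g$ from Lemma \ref{kreduced} and the fact that in the one-dimensional space $X_k$ the reduced representative realizes the weight of the homotopy class. The only cosmetic difference is that you obtain the inequality $w^A_B([g])\geq w^{A_k}_{B_k}([\pi_k\circ g])$ directly from the definition of the class weight as an infimum, whereas the paper routes it through $\pi_k\circ\tilde g$; the content is identical.
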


\begin{proof}
Let $\tilde g$ be the path homotopic to $g$ such that
$\pi_k\circ\tilde g$ is reduced.  Then $w^{A_k}_{B_k}([\pi_k\circ
g])  = w^{A_k}_{B_k}(\pi_k\circ \tilde g)= w^A_B(\tilde g) \geq
w^A_B([g])$. For $g'$ homotopic to $g$, $w^A_B(g')=
w^{A_k}_{B_k}(\pi_k\circ g') \geq w^{A_k}_{B_k}(\pi_k\circ
 \tilde g)= w^{A_k}_{B_k}([\pi_k\circ g])$. Then $w^A_B([g])\geq
 w^{A_k}_{B_k}([\pi_k\circ g])$.

Thus $w^A_B([g]) = w^{A_k}_{B_k}([\pi_k\circ g])$.

\end{proof}

Lemma \ref{preserve} implies that a necessary condition for $g$ to be $k$-reduced is that it have minimal weight in its path class with respect to all disjoint half-planes $A$ and $B$ with boundaries
parallel to $k$.

In fact this condition is also sufficient. Suppose $g$ has minimal weight in its path class with respect to all subsets $A$ and $B$ with boundaries parallel to $k$. If $\pi_k\circ g$ is not reduced, then there exists $g(c)$ and $g(d)$ such that $\overline{g(c)g(d)}$ is in contained in $X$ and $\pi_k\circ g\bigr|_{[c,d]}$ is null-homotopic but not constant.
Then $g\bigr|_{[c,d]}$ must not be contained in the line segment $\overline{g(c)g(d)}$. However, then $g$ is homotopic to $\tilde g$ where $g\bigr|_{[c,d]}$ is replace by $\overline{g(c)g(d)}$ and the weight of $\tilde g$ is strictly less than the weight of $g$ for some disjoint half-planes with boundaries parallel to $k$.

This characterization of begin reduced implies that if $\tilde g$ is obtained by reducing $g$, a $k$-reduced path, with respect to $l$; then $\tilde g$ is $(k,l)$-reduced (reduced with respect to both $k$ and $l$).

\begin{lem}
Suppose that $f:[0,3]\to Y$ is a path into a one-dimensional space $Y$ for $i\in 0,1,2$.  Then for any $A$ and $B$ with disjoint closures, $w^A_B \bigl([f]\bigr)\leq w^A_B \bigl(f|_{[0,1]}\bigr) +w^A_B \bigl(f|_{[1,2]}\bigr) + w^A_B \bigl(f|_{[2,3]}\bigr)+2$.
\end{lem}

\begin{proof}
Fix a set of maximal disjoint open intervals $\bigr\{(a_i,b_i)\bigl\}_{i\in J}$ such that $f|_{[a_i, b_i]}$ is a non-degenerate null homotopic loop.  Then define $f_r$ as in Theorem \ref{reducedloop-thm}.  Choose $\{s_i\}$ such that $w^A_B (f_r) = \sum\limits_{j=1}^{n} -r_{f_r}(s_j)\cdot r_{f_r}(s_{j+1})$.  Since $f_r$ is constant an the intervals $(a_i, b_i)$, we may assume that $s_j \not\in \bigcup\limits_{i\in J} (a_i, b_i)$ for all $j$.  Then $ \sum\limits_{j=1}^{n} -r_{f_r}(s_j)\cdot r_{f_r}(s_{j+1}) = \sum\limits_{j=1}^{n} -r_{f}(s_j)\cdot r_{f}(s_{j+1})$ and

\begin{align*}
\sum\limits_{j=1}^{n} -r_{f}(s_j)\cdot r_{f}(s_{j+1}) & \leq  \sum\limits_{s_{j+1}\leq 1} - r_{f}(s_j)\cdot r_{f}(s_{j+1}) +  \sum\limits_{s_j\geq 1\atop \newline s_{j+1}\leq 2} -r_{f}(s_j)\cdot r_{f}(s_{j+1}) \\
 &\hspace{2cm} + \sum\limits_{s_j\geq 2} -r_{f}(s_j)\cdot r_{f}(s_{j+1}) + 2 \\
&\leq  w^A_B \bigl(f|_{[0,1]}\bigr) +w^A_B \bigl(f|_{[1,2]}\bigr) + w^A_B \bigl(f|_{[2,3]}\bigr)+2
\end{align*}

\end{proof}

\begin{lem}\label{different delineations}
Suppose that $\alpha, \beta :I \to X$ are $(k,l)$-reduced paths such that $\pi_k \circ \alpha =_p \pi_k \circ \beta$.  Then $w^A_B(\alpha)\leq w^A_B(\beta)+4$ for any  $A$ and $B$ which are disjoint closed half-planes with boundary parallel to $l$.
\end{lem}

\begin{proof}
Since $\pi_k \circ \alpha =_p \pi_k \circ \beta$, the line segments $\overline{\alpha(0)\beta(0)}$ and $\overline{\beta(1)\alpha(1)}$ are contained in $X$.

\begin{clm}
$\overline{\alpha(0)\beta(0)}$ and $\overline{\beta(1)\alpha(1)}$ are $(k,l)$-reduced paths and $w^A_B(\overline{\alpha(0)\beta(0)}),w^A_B(\overline{\beta(1)\alpha(1)}) \leq 1$ for any  $A$ and$ B$ which are disjoint closed half-planes with boundary parallel to $l$.
\end{clm}

\begin{proof}[Proof of claim.]
Notice that $\overline{\alpha(0)\beta(0)}$ and $\overline{\beta(1)\alpha(1)}$ are line segments in $X$ parallel to $k$; hence, their images in $X_k$ are degenerate and reduced.  Any two non-parallel line segments intersect at most at a single point.  Thus the line segments are also $l$-reduced and the weight condition follows.
\end{proof}

However, $\overline{\alpha(0)\beta(0)}*\beta * \overline{\beta(1)\alpha(1)}$ is not necessarily $l$-reduced.

Fix $A$ and $ B$ which are disjoint closed half-planes with boundary parallel to $l$.  Define $f:[0,3]\to X$ by $f|_{[0,1]} =_p \overline{\alpha(0)\beta(0)}$, $f|_{[1,2]} =_p \beta$, and $f|_{[2,3]} =_p \overline{\beta(1)\alpha(1)}$.  The previous lemma together with the claim gives us the desired inequality.
\end{proof}

\subsection{Induced by a continuous map}

For $\phi:\mathbb H \to \pi_1(X,x_0)$ a fixed homomorphism into the
fundamental group of a planar Peano continuum and each line $k$ in the plane, we will use $T_k$ to denote the path such that $\widehat {T}_k \circ (\pi_{k*}\circ\phi)$ is induced by a continuous map.

The key to being able to reduced the planar case to the
one-dimensional case is the following proposition.

\begin{prop}\label{pheadtail} {For $k$ a line in the plane, there
exists $\alpha_k$ a path in $X$ such that $\pi_k(\alpha_k) = T_k$.}
\end{prop}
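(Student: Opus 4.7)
The plan is to construct a lift $\alpha_k: I \to X$ of $T_k$ under the delineation map $\pi_k$. Although $\pi_k$ is not a covering map, its fibers are contractible (singletons or closed line segments parallel to $k$), $\pi_{k*}$ is injective on $\pi_1$ (Theorem 1.4 in \cite{cc4}), and $X$ is planar; these structural features should permit the lift.

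Concretely, I would fix coordinates so that $k$ is vertical and build the lift by approximation. Using uniform continuity of $T_k$, partition $I$ into subdivisions $0 = t_0 < t_1 < \dots < t_n = 1$ whose $T_k$-images are very small in $X_k$. Starting from $x_0$ the basepoint in $\pi_k^{-1}(T_k(0))$, iteratively select $x_{i+1} \in \pi_k^{-1}(T_k(t_{i+1}))$ close to $x_i$ (possible by upper semi-continuity of the decomposition, which keeps nearby fibers spatially close to $\pi_k^{-1}(T_k(t_i))$), and connect consecutive $x_i, x_{i+1}$ by short paths $\gamma_i$ in $X$ using its local path-connectedness. The concatenation $\gamma_0 * \dots * \gamma_{n-1}$ is an approximate lift $\alpha_k^{(0)}$ whose $\pi_k$-image closely tracks $T_k$.

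The main obstacle is then to adjust this approximate lift so that its projection equals $T_k$ up to reparametrization. A delicate point occurs whenever $T_k$ passes through a point $p \in X_k$ whose fiber is a non-trivial vertical segment: the lift must ``dwell'' on the fiber while $\pi_k \circ \alpha_k$ remains constant at $p$, which corresponds to reparametrizing $T_k$ with a constant sub-path at $p$; this is compatible with the paper's convention that paths are identified up to reparametrization. For the non-dwelling portions, if $\pi_k \circ \gamma_i$ differs from $T_k|_{[t_i, t_{i+1}]}$, their concatenation-inverse is a small loop in $X_k$ which, for fine enough partitions, lifts to a null-homotopic loop in $X$ by the injectivity of $\pi_{k*}$ and the planar/cell-like arguments of \cite{cc4}; one uses this to homotope $\gamma_i$ rel endpoints so that its projection matches $T_k|_{[t_i, t_{i+1}]}$ exactly. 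An alternative, more conceptual strategy would be to invoke a general path-lifting theorem for monotone upper-semi-continuous decompositions of planar Peano continua with arc-like fibers, which can be extracted from the techniques of \cite{cc4}; either way, the combination of planarity, contractibility of fibers, and injectivity of $\pi_{k*}$ is what makes the lift exist.
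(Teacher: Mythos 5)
Your strategy treats $T_k$ as an arbitrary path in $X_k$ and tries to lift it through the decomposition map $\pi_k$ by a general lifting principle. This is where the argument breaks: $\pi_k$ does not have the path-lifting property, even up to reparametrization, so no such general theorem can be extracted from \cite{cc4}. A concrete obstruction: let $k$ be vertical, let $S=\{0\}\times[0,1]$, and let $X$ be $S$ together with a null sequence of horizontal whiskers $[0,\epsilon_n]\times\{q_n\}$ attached at a dense set of heights $q_n$. This is a planar Peano continuum, and $X_k$ is the wedge of a null sequence of arcs at the point $p=\pi_k(S)$. The path in $X_k$ that successively traverses whiskers whose heights oscillate between $0$ and $1$ and ends at $p$ is continuous, but any lift of it must visit the base points $(0,q_{n_i})$ at times converging to the right endpoint, hence cannot converge; so no lift exists. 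The same phenomenon defeats your approximate-lift construction (upper semicontinuity only confines a nearby fiber to a neighborhood of a possibly long fiber, so consecutive choices $x_i$, $x_{i+1}$ may be forced to opposite ends of that fiber and the approximations need not converge as the partition is refined), and the correction step is also unjustified: injectivity of $\pi_{k*}$ controls homotopy classes rel endpoints, not the realization of a specific parametrized path in $X_k$ as an exact $\pi_k$-image. Note that Lemma \ref{kreduced} is not a lifting lemma; it starts from a path already living in $X$ and only realizes the reduced representative of its projection.

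The missing idea is that one must use what $T_k$ actually is. Since $\widehat{T}_k\circ\pi_{k*}\circ\phi=f_*$ for a continuous $f$, one has $(\pi_{k*}\circ\phi(a))_r=(T_k*f_*(a)*\overline{T}_k)_r$ for every $a\in\mathbb H$, and the paper (Lemma \ref{key} and Corollary \ref{corkey}) constructs, via the weight function and suitably large powers $b_{i_n}^{r_n+s_n}$, a single $a$ for which no cancellation can absorb the terminal $\overline{T}_k$; thus $T_k$ is a tail of the reduced representative of $\pi_{k*}\circ\phi(a)$. Only then does Lemma \ref{kreduced} apply, to a representative of the loop class $\phi(a)$ in $X$, producing a $k$-reduced path $\tilde g$ in $X$ with $\pi_k\circ\tilde g=(\pi_{k*}\circ\phi(a))_r$; the desired $\alpha_k$ is the terminal subpath of $\tilde g$ lying over $T_k$. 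Your proposal contains none of this, and since the general lifting claim it relies on is false, the gap is essential rather than technical.
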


To prove this proposition we will construct a single word
$a\in\mathbb H$ such that $T_k\stackrel{t}\rightarrow
(\pi_k\circ\phi(a))_r$.

\begin{lem}\label{toobig1}
Let $X$ be a one-dimensional space. Suppose $\alpha,\beta,\gamma:[0,1]\to X$ are reduced paths such that $\alpha(1)= \beta(0)$, $\beta(1) = \gamma(0)$, and $\beta*\gamma$ is a reduced path.  If there exists $A$, $B$ such that $w^{A}_{B}(\alpha) < w^{A}_{B} (\beta)$ then $(\alpha*\beta)_r*\gamma$ is a reduced path.
\end{lem}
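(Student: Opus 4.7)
The plan is to argue by contradiction. Assume $(\alpha*\beta)_r*\gamma$ is not reduced. Using uniqueness of reduced representatives in a one-dimensional space (\cite{cc3}), I would first write $(\alpha*\beta)_r=\alpha_1*\beta_1$ with a single cancellation piece $\delta$ satisfying $\alpha=\alpha_1*\delta$ and $\beta=\overline{\delta}*\beta_1$. Because $\beta_1*\gamma$ is obtained from the reduced path $\beta*\gamma$ by restricting past $\overline{\delta}$, it is itself reduced; the product $\alpha_1*\beta_1=(\alpha*\beta)_r$ is reduced by construction.

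Any reducing subloop of $\alpha_1*\beta_1*\gamma$ cannot lie inside $\alpha_1*\beta_1$ or inside $\beta_1*\gamma$, so it must span across the middle factor. Concretely, there are a tail $\alpha_1'\stackrel{t}{\rightarrow}\alpha_1$ and a head $\gamma_1'\stackrel{h}{\rightarrow}\gamma$ for which $\alpha_1'*\beta_1*\gamma_1'$ is a nonconstant nullhomotopic loop, giving $\beta_1*\gamma_1'\simeq\overline{\alpha_1'}$. Both sides are reduced (the left as a subpath of the reduced $\beta*\gamma$; the right as the reverse of a subpath of the reduced $\alpha$), so by uniqueness of reduced representatives they agree up to reparametrization. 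Hence $\alpha_1'=\overline{\gamma_1'}*\overline{\beta_1}$.

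Substituting back and using $\overline{\beta_1}*\delta=\overline{\overline{\delta}*\beta_1}=\overline{\beta}$, I obtain the decomposition
\[
\alpha \;=\; (\text{head of }\alpha_1)\,*\,\overline{\gamma_1'}\,*\,\overline{\beta_1}\,*\,\delta \;=\; (\text{head of }\alpha_1)\,*\,\overline{\gamma_1'}\,*\,\overline{\beta}.
\]
The oscillation count $\overline{w}^V_U$ is super-additive under concatenation (no alternation internal to a factor is destroyed in the combined path), and this passes to the infima defining $w^A_B$; reversal also preserves alternation counts. Combining these facts with the decomposition above yields $w^A_B(\alpha)\geq w^A_B(\overline{\beta})=w^A_B(\beta)$, contradicting the hypothesis $w^A_B(\alpha)<w^A_B(\beta)$.

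The hard part is the first structural step: showing that the reduced form of $\alpha*\beta$ is genuinely of the shape $\alpha_1*\beta_1$ with a cancellation piece $\delta$ appearing as a tail of $\alpha$ and the reverse of a head of $\beta$, and then carefully locating the hypothetical reducing subloop so that $\beta_1$ is entirely engulfed. Once this combinatorial setup is pinned down, the identification $\alpha_1'=\overline{\gamma_1'}*\overline{\beta_1}$ is immediate from uniqueness of reduced representatives, and the final weight estimate is an almost automatic super-additivity computation.
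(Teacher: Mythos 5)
Your proof is correct and takes essentially the same route as the paper's: the paper's (two-sentence) argument likewise observes that the only way $(\alpha*\beta)_r*\gamma$ can fail to be reduced is if $\alpha = \gamma'*\overline{\beta}$ for some path $\gamma'$, whence $w^{A}_{B}(\beta)\leq w^{A}_{B}(\alpha)$ for all $A$, $B$. Your write-up simply supplies the cancellation and subloop-engulfing details that the paper leaves implicit.
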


\begin{proof}
The only way $(\alpha*\beta)_r*\gamma$ might not be a reduced path is if $\alpha = \gamma' * \overline \beta $ for some non-constant path $\gamma'$.  This would imply $w^{A}_{B}(\beta) \leq w^{A}_{B} (\alpha)$ for all $A$, $B$.
\end{proof}

\begin{lem}\label{toobig2}
Let $X$ be a one-dimensional space. Suppose $\alpha,\beta,\gamma:[0,1]\to X$ are reduced paths such that $\alpha(1)= \beta(0)$, $\beta(1) = \gamma(0)$, and $\alpha*\beta$ is a reduced path. If  $\diam(\gamma) < \diam(\beta)$ then $\alpha*(\beta*\gamma)_r$ is a reduced path.
\end{lem}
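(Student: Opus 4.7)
The plan is to mirror the proof of Lemma~\ref{toobig1}: argue by contradiction, reducing a failure of reducedness of $\alpha*(\beta*\gamma)_r$ to the statement that $\overline{\beta}$ is a head of $\gamma$, which the diameter hypothesis forbids. I would start by putting $(\beta*\gamma)_r$ in its standard one-dimensional cancellation form: by the existence and uniqueness of reduced representatives (Cannon and Conner, \cite{cc3}), there exist reduced paths $\beta_1, \eta, \gamma_1$ with $\beta = \beta_1*\eta$, $\gamma = \overline{\eta}*\gamma_1$, and $(\beta*\gamma)_r = \beta_1*\gamma_1$ already reduced. Since $\alpha*\beta = \alpha*\beta_1*\eta$ is reduced by hypothesis, its initial segment $\alpha*\beta_1$ is reduced as well.

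Next, assume for contradiction that $\alpha*\beta_1*\gamma_1$ is not reduced, so it contains a nontrivial nullhomotopic subloop. Because each of $\alpha$, $\beta_1$, $\gamma_1$ is reduced and because both $\alpha*\beta_1$ and $\beta_1*\gamma_1$ are reduced, this subloop cannot be contained in any single piece nor in either adjacent pair; it must span all three and therefore take the form $\alpha|_{[s,1]}*\beta_1*\gamma_1|_{[0,t]}$ with $s<1$ and $t>0$. Combined with the one-dimensional uniqueness of reduced representatives, the nullhomotopy gives $\alpha|_{[s,1]}*\beta_1 = \overline{\gamma_1|_{[0,t]}}$ up to reparametrization. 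Reading off tails on both sides, $\overline{\beta_1}$ is a head of $\gamma_1|_{[0,t]}$, hence of $\gamma_1$, so $\gamma_1 = \overline{\beta_1}*\gamma_2$ for some $\gamma_2$. Unraveling,
\[
\gamma = \overline{\eta}*\gamma_1 = \overline{\eta}*\overline{\beta_1}*\gamma_2 = \overline{\beta_1*\eta}*\gamma_2 = \overline{\beta}*\gamma_2,
\]
so $\overline{\beta}$ is a head of $\gamma$. The image of $\beta$ therefore lies in that of $\gamma$, forcing $\diam(\beta) \le \diam(\gamma)$ and contradicting the hypothesis.

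The main obstacle is the case analysis of where the nontrivial nullhomotopic subloop lives, together with the passage from nullhomotopy of $\alpha|_{[s,1]}*\beta_1*\gamma_1|_{[0,t]}$ to a reduced-path equality up to reparametrization. This last step is exactly where one-dimensionality of $X$ is indispensable and relies on Cannon and Conner's existence and uniqueness of reduced representatives; the remainder is bookkeeping with heads and tails, plus the trivial-$\beta_1$ edge case in which $\overline{\beta} = \overline{\eta}$ is already visibly a head of $\gamma$.
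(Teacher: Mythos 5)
Your proof is correct and takes essentially the same route as the paper: the paper's one-line argument for Lemma~\ref{toobig1}, which it says to adapt here, likewise reduces any failure of reducedness to $\overline{\beta}$ being a head of $\gamma$ and then rules this out by the hypothesis (diameter here, weight there). You have simply supplied the cancellation bookkeeping that the paper leaves implicit.
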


The proof follows is similar to the proof of Lemma \ref{toobig1}.

In Lemma \ref{toobig1}, the condition that there exists $A$, $B$ such that $w^{A}_{B}(\alpha) < w^{A}_{B} (\beta)$ can be replaced by $\diam(\alpha)< \diam(\beta)$.  The analogous change for Lemma \ref{toobig2} is also valid.

\begin{lem}\label{key}
Let $Y$ be a one-dimensional Peano continuum.  Let $f:\textbf{E}\to Y $ be
a continuous function enjoying the property that $\im(f_*)$ is uncountable
and let $T$ be any path from $f(0)$ to $y\in Y$. Then there exists $a\in\mathbb H$ such that no non-constant head of $T$ is a tail for $f_*(a)_r$, i.e. $f_*(a)_r * T$ is reduced.
\end{lem}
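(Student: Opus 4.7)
The plan is to proceed by contradiction, using the uncountability of $\im(f_*)$ to break any structural hypothesis that every $a \in \mathbb H$ has a bad ending. First I would replace $T$ by its reduced representative $T_r$, observing that the set of non-constant heads of $T$ depends only on the homotopy class relative to endpoints; so WLOG $T$ is reduced, and ``$f_*(a)_r * T$ reduced'' is equivalent to ``no non-constant head of $T$ is a tail of $f_*(a)_r$''. Now suppose for contradiction that for every $a \in \mathbb H$, $f_*(a)_r = g_a * \overline{h_a}$ with $h_a$ a non-constant head of $T$ and the indicated decomposition reduced.

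The next step is to extract a rigid structure from this assumption. By Lemma \ref{lem6}, because $\im(f_*)$ is uncountable one can choose $g \in \mathbb H$ with $f_*(g)$ essential and a pair of disjoint closed sets $O', O''$ in $Y$ such that $w^{O'}_{O''}(f_*(g^r)_r) \geq r$ for all $r$. Writing $f_*(g)_r = t * \gamma * \overline t$ with $t$ the maximal head-tail and $\gamma$ the core, one gets $f_*(g^r)_r = t * \gamma^r * \overline t$ (reduced) for every $r$, so the terminal segment $\overline t$ is independent of $r$. Applying the standing hypothesis to $a = g^r$ for $r$ large, the head $h_{g^r}$ must contain (up to length) the fixed $t$, while the complementary piece $g_{g^r}$ absorbs all the unbounded weight: $w^{O'}_{O''}(g_{g^r}) \geq r - w^{O'}_{O''}(T)$. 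Thus $t$ itself must be a head of $T$, which pins the maximal head-tail of every essential $f_*(g)_r$ to sit inside the reduced $T$.

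To close the contradiction I would vary $g$. Since $\im(f_*)$ is uncountable, a Cannon--Conner style argument produces two essential elements $g_1, g_2 \in \mathbb H$ whose associated head-tails $t_1, t_2$ are genuinely different as heads of $T$ (for instance by choosing $g_1, g_2$ whose images have disjoint supports away from a small neighborhood of $f(0)$). Consider $a = g_1 * g_2^{-1}$: applying Lemma \ref{toobig1} (or its diameter analog) to the concatenation, the reduction of $f_*(g_1)_r * f_*(g_2^{-1})_r$ cannot erase the $\overline{t_2}$-side ending because the weight/diameter across the junction is controlled, so $f_*(a)_r$ ends in a segment built from the mismatched pieces of $t_1$ and $t_2^{-1}$. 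Such an ending cannot itself be the reverse of a single head of $T$ (as $t_1$ and $t_2$ are incompatible initial segments of $T_r$), contradicting the standing assumption.

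The main obstacle is the final step: making precise that ``mixing'' two different head-tails $t_1, t_2$ via $g_1 * g_2^{-1}$ really produces a tail of $f_*(a)_r$ which is not a head of $T$. This is where Lemmas \ref{toobig1} and \ref{toobig2} earn their keep, since the danger is that the reduction of $f_*(g_1)_r * f_*(g_2)_r^{-1}$ cascades beyond its natural junction and rewrites the ending of $f_*(a)_r$ into a genuine head of $T$ after all. Controlling this requires choosing $g_1, g_2$ so that both a weight inequality (for Lemma \ref{toobig1}) and a diameter inequality (for Lemma \ref{toobig2}) hold across the junction, which the uncountability of $\im(f_*)$ together with Lemma \ref{lem6} permits by scaling up to a sufficiently large power of one factor.
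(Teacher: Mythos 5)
Your proposal has a genuine gap, and it appears at the two places where the contradiction is supposed to close. First, from the standing assumption you only get that \emph{some} non-constant head $h_{g^r}$ of $T$ is a tail of $f_*(g^r)_r$; since a tail of $t*\gamma^r*\overline{t}$ may be an arbitrarily short initial segment $t|_{[0,s]}$ of $t$, nothing forces $h_{g^r}$ to contain $t$, and the inference ``thus $t$ itself must be a head of $T$'' does not follow. Second, and more fatally, the final step asks for two head-tails $t_1,t_2$ that are ``incompatible initial segments of $T_r$.'' But heads of a fixed reduced path $T_r$ are exactly its initial segments, and any two initial segments of a single path are comparable (one is an initial segment of the other), so no such incompatibility exists; the contradiction mechanism cannot fire. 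Underlying both problems is that your argument never confronts the real difficulty: the forbidden cancellation between $f_*(a)_r$ and $T$ can occur at an arbitrarily small scale near the junction point, so ruling it out with a single finite combination such as $g_1*g_2^{-1}$ would require controlling the terminal segment of $f_*(a)_r$ at every scale, which your weight/diameter estimates across one junction do not provide.

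The paper's proof is a direct construction rather than a contradiction. It writes $f_*(a_n)_r=t_n*b_n*\overline{t}_n$ with $b_n$ the core, passes to a subsequence with controlled diameters, and chooses exponents $r_n$ (so that $w^{A_n}_{B_n}(b_{i_n}^{r_n})>w^{A_n}_{B_n}(T)$) and $s_n$ (to dominate the weight of the already-built prefix), setting $a=\prod_n a_{i_n}^{r_n+s_n}$. The claim in that proof shows, via Lemmas \ref{toobig1} and \ref{toobig2}, that the displayed concatenation is already reduced, so the reduced form of $f_*(a)$ ends, at every scale $m$, in a block $b_{i_m}^{r_{i_m}}$ followed by a remainder of small diameter. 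Any non-constant tail of $f_*(a)_r$ that were a head of $T$ would then have to contain some full block $b_{i_{m_0}}^{r_{i_{m_0}}}$ as a subpath of $T$, contradicting the choice of $r_{i_{m_0}}$. If you want to salvage your approach, you would essentially have to reproduce this infinite-product construction; the weight and diameter comparisons you invoke are the right tools, but they must be applied along an infinite sequence of junctions, not a single one.
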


This can be derived as a result of Lemma 6.1 in \cite {eda}.  However; the proof of Proposition \ref{pmaximalheadtail} depends heavily on the construction of $a$, so we will present a proof here for completeness.

\begin{proof}
Let $b_n$ and $t_n$ be subpaths of $f_*(a_n)_r$ such that $f_*(a_n)_r =_p t_n* b_n*\overline{t}_n$ where $b_n$ is the core of $f_*(a_n)_r $. We may assume that $b_n$ is an essential loop, by passing to a subsequence if necessary.

Since $f$ is continuous, the paths $t_n$ and $f_*(a_n)_r$ must both form null sequences.  Hence, we may choose an increasing sequence $i_n$ such that  $\diam{(t_{i_n})}$ is decreasing and $\diam{(f_*(a_n)_r)} < \dfrac{\diam{(t_{i_{n-1}})}}2$.

There exists an $r_n$ such that for some $A_n$ and $ B_n$, $w^{A_n}_{B_n}(T) < w^{A_n}_{B_n}(b_{i_n}*\cdots* b_{i_n})$, the concatenation of $r_n$ copies of $b_{i_n}$.

We will inductively define integers $s_n$ and homotopy classes $c_n$ in $\mathbb H$.  Let $s_1 = 0$,  $c_1$ be the identity in $\mathbb H$, and suppose $s_i$ and $c_i$ are defined for $1\leq i<n$.

Again, there exists an $s_n$ such that for some $A'_n$ and $ B'_n$, $w^{A'_n}_{B'_n}(f_*(\prod\limits_{i=1}^{n-1}(c_i))_r*t_{i_n}) < w^{A'_n}_{B'_n}(b_{i_n}*\cdots* b_{i_n})$, the concatenation of $s_n$ copies of $b_{i_n}$.  Let $c_n = a_{i_n}^{r_n+s_n}$.  Then $f_*(c_n)_r = t_n* b_n^{r_n+s_n}*\overline{t}_n$.  To simplify notation, let $t_{i_0}, b_{i_0}$ be constant paths and $r_{i_0}, s_{i_0}=0$.

Let $a = \prod\limits_{n=1}^\infty c_n$ and note that this is a well-define homotopy class in $\mathbb H$.  Since $f$ is a continuous maps; $ \prod\limits_{n=1}^\infty t_n* b_n^{r_n+s_n}*\overline{t}_n$ is a well defined path and  $f_*(a) = \Bigl[\prod\limits_{n=1}^\infty t_n* b_n^{r_n+s_n}*\overline{t}_n\Bigr]$.  Our objective with the next three claims is to understand the backtracking that appears in the path $ \prod\limits_{n=1}^\infty t_n* b_n^{r_n+s_n}*\overline{t}_n$.

\begin{clm}\label{claim1}
For $m\geq 2$, \begin{equation}\label{1}  t_{i_1}*b_{i_1}^{s_1} *\Bigl(\prod\limits_{j=2}^{m} b_{i_{j-1}}^{r_{j-1}} * (\overline t_{i_{j-1}} * t_{i_j} * b_{i_j}^{s_{i_j}})_r \Bigr) \end{equation} is a reduced path.
\end{clm}

Since $t_{i_0},b_{i_0}$ are constant paths and $r_{i_0}=0$, Equation (\ref{1}) is the same (up to reparameterization) as
 $$ \prod\limits_{j=1}^{m} b_{i_{j-1}}^{r_{j-1}} * (\overline t_{i_{j-1}} * t_{i_j} * b_{i_j}^{s_{i_j}})_r.$$

\begin{proof}[Proof of Claim \ref{claim1}]
We will proceed by induction on $m$.  The case $m=1$ is trivial. By inducting on $m$, suppose that  that $\prod\limits_{j=1}^{m} b_{i_{j-1}}^{r_{j-1}} * (\overline t_{i_{j-1}} * t_{i_j} * b_{i_j}^{s_{i_j}})_r $ is reduced.

Then $$\Bigl(\prod\limits_{j=1}^m b_{i_{j-1}}^{r_{j-1}} * (\overline t_{i_{j-1}} * t_{i_j} * b_{i_j}^{s_{i_j}})_r \Bigr) * b_{i_m}^{r_{i_m}} * \overline t_{i_m} $$ is a reduced path by Lemma \ref{toobig1} with $\alpha =\Bigl(\Bigl(\prod\limits_{j=1}^{m-1} b_{i_{j-1}}^{r_{j-1}} * (\overline t_{i_{j-1}} * t_{i_j} * b_{i_j}^{s_{i_j}})_r \Bigr)* b_{i_{m-1}}^{r_{m-1}}*\overline t_{i_{m-1}}*t_{i_{m}}\Bigr)_r $, $\beta = b_{i_{m}}^{s_{i_{m}}}$, and $\gamma= b_{i_m}^{r_{i_m}}*\overline t_{i_m}$.  Here we are also using the inductive hypothesis.

We can complete the proof by applying Lemma \ref{toobig2}, one more time, with \\ $\alpha = \Bigl(\prod\limits_{j=1}^m b_{i_{j-1}}^{r_{j-1}} * (\overline t_{i_{j-1}} * t_{i_j} * b_{i_j}^{s_{i_j}})_r \Bigr) * b_{i_m}^{r_{i_m}}$, $\beta = \overline t_{i_m}$, and $\gamma= t_{i_2}*b_{i_2}^{s_{i_2}}$.

\end{proof}

\begin{clm}\label{claim3}For $m\geq 2$,
\begin{equation}\label{4}b_{i_m}^{r_{i_m}} * \overline t_{i_m} * f_*\bigl(\prod\limits_{i={m+1}}^\infty c_i\bigr)_r\end{equation} is a reduced path.
\end{clm}

This follow by applying Lemma \ref{toobig2} with $\alpha = b_{i_m}^{r_{i_m}}$, $\beta = t_{i_m}$ and $\gamma = f_*\bigl(\prod\limits_{i={m+1}}^\infty c_i\bigr)_r$.

\begin{clm}\label{claim2} For all $m\geq 2$,
\begin{equation}\label{2}f_*(a)_r =_p t_{i_1}*b_{i_1}^{s_1} *\Bigl(\prod\limits_{j=2}^m b_{i_{j-1}}^{r_{j-1}} * (\overline t_{i_{j-1}} * t_{i_j} * b_{i_j}^{s_{i_j}})_r  \Bigr)* b_{i_m}^{r_{i_m}} * \overline t_{i_m} * f_*\bigl(\prod\limits_{i={m+1}}^\infty c_i\bigr)_r  \end{equation} where the equality implies that the right hand side is reduced.

\end{clm}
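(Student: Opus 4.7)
My plan is to prove the claim by induction on $m$, starting with the base case $m = 2$. Throughout, I will exploit the fact that each $f_*(c_n) = t_{i_n} * b_{i_n}^{r_n+s_n} * \overline t_{i_n}$ is itself a reduced path: this is because $t_{i_n}$ is the maximal head-tail of $f_*(a_{i_n})_r$ and $b_{i_n}$ is its core, so every positive power of $b_{i_n}$ is reduced and concatenates with $t_{i_n}$ in a reduced fashion. I will also split each $b_{i_n}^{r_n+s_n}$ as $b_{i_n}^{s_n}*b_{i_n}^{r_n}$ in order to assemble the three kinds of chunk appearing on the right-hand side.

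For the base case I would start from $f_*(a)=f_*(c_1)*f_*(c_2)*f_*(\prod_{i\geq 3}c_i)$, substitute the expressions for $f_*(c_1)$ and $f_*(c_2)$, split the powers of $b_{i_1}$ and $b_{i_2}$ as above, and regroup to obtain the asserted three-block form. The equality of homotopy classes is then a formal regrouping. The induction step is the main content: assuming the formula holds at stage $m$, I would expand the last block $(b_{i_m}^{r_m}*\overline t_{i_m}*f_*(\prod_{i\geq m+1}c_i))_r$ by peeling off $c_{m+1}$, substituting $f_*(c_{m+1})=t_{i_{m+1}}*b_{i_{m+1}}^{r_{m+1}+s_{m+1}}*\overline t_{i_{m+1}}$, splitting $b_{i_{m+1}}^{r_{m+1}+s_{m+1}}$, and regrouping to isolate the new middle chunk $b_{i_m}^{r_m}*(\overline t_{i_m}*t_{i_{m+1}}*b_{i_{m+1}}^{s_{m+1}})_r$ together with the new tail $(b_{i_{m+1}}^{r_{m+1}}*\overline t_{i_{m+1}}*f_*(\prod_{i\geq m+2}c_i))_r$.

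The analytic content comes from the two weight inequalities used to define $r_n$ and $s_n$: $w^{A_n}_{B_n}(T)<w^{A_n}_{B_n}(b_{i_n}^{r_n})$ and $w^{A'_n}_{B'_n}(f_*(\prod_{i<n}c_i)_r * t_{i_n})<w^{A'_n}_{B'_n}(b_{i_n}^{s_n})$, together with the diameter gap $\diam(f_*(a_n)_r)\leq \tfrac12\diam(t_{i_{n-1}})$. Combined with Lemmas \ref{toobig1} and \ref{toobig2} (and the diameter variant noted immediately after Lemma \ref{toobig2}), these force that when we reduce $(\overline t_{i_{j-1}}*t_{i_j}*b_{i_j}^{s_j})$, the outgoing end retains at least one full (indeed most) copy of the core $b_{i_j}$, and that the tail chunk $(b_{i_{m+1}}^{r_{m+1}}*\overline t_{i_{m+1}}*f_*(\prod_{i\geq m+2}c_i))_r$ still begins with the full $b_{i_{m+1}}^{r_{m+1}}$ (because everything that follows has strictly smaller weight than a single copy of $b_{i_{m+1}}$ with respect to the separation $A_{m+1},B_{m+1}$).

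The main obstacle is the bookkeeping at the three interfaces: (i) between $(t_{i_1}*b_{i_1}^{s_1})_r$ and $b_{i_1}^{r_1}$, (ii) between $b_{i_{j-1}}^{r_{j-1}}$ and $(\overline t_{i_{j-1}}*t_{i_j}*b_{i_j}^{s_j})_r$, and (iii) between $(\overline t_{i_{j-1}}*t_{i_j}*b_{i_j}^{s_j})_r$ and $b_{i_j}^{r_j}$. At each interface I must verify that the reduction carried out inside one chunk never eats through an entire copy of the neighboring $b_{i_\ell}$ power, so no further cancellation crosses the boundary. In each case this is exactly the hypothesis of Lemma \ref{toobig2} (or its mirror Lemma \ref{toobig1}) with $\beta$ taken to be the buffering power of $b_{i_\ell}$ and $\alpha$ or $\gamma$ the adjacent, provably lighter or smaller piece. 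Chaining these estimates through the finite product gives the reduced decomposition for $m+1$, completing the induction.
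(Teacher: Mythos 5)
Your proposal is correct and takes essentially the same route as the paper: induction on $m$, with Lemmas \ref{toobig1} and \ref{toobig2} applied at the chunk interfaces, the $s_n$-weight inequality buffering cancellation from the left, and the diameter gap together with the choice of $r_n$ protecting the right-hand ends and the infinite tail. The only cosmetic difference is that the paper first splits off the infinite tail block (via Lemma \ref{toobig2} and then Lemma \ref{toobig1}) and reduces to an induction on the finite prefix, whereas you carry the tail along inside a single induction on the full identity.
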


\begin{proof}[Proof of Claim \ref{claim2}.]
Since $ b_{i_m}$ is cyclically reduced and Equation (\ref{4}) is reduced,
$$b_{i_{m}}^{s_{i_{m}}} * \Bigl (b_{i_{m}}^{r_{i_{m}}} * \overline t_{i_{m}} * f_*\bigl(\prod\limits_{i={m+1}}^\infty c_i\bigr)\Bigr)_r $$ is a reduced path.

This give us that \begin{equation}\label{3} \Bigl(t_{i_1}*b_{i_1}^{s_1} *\Bigl[\prod\limits_{j=2}^{m} \bigl(b_{i_{j-1}}^{r_{j-1}} * (\overline t_{i_{j-1}} * t_{i_j} * b_{i_j}^{s_{i_j}})_r \Bigr]\Bigr)_r * \Bigl(b_{i_m}^{r_{i_m}} * \overline t_{i_m} * f_*\bigl(\prod\limits_{i={m+1}}^\infty c_i\bigr)\Bigr)_r \end{equation} is a reduced path by applying Lemma \ref{toobig1} with \\ $\alpha = \Bigl(t_{i_1}*b_{i_1}^{s_1} *\Bigl(\prod\limits_{j=2}^{m-1} \bigl(b_{i_{j-1}}^{r_{j-1}} * (\overline t_{i_{j-1}} * t_{i_j} * b_{i_j}^{s_{i_j}})_r \Bigr)*b_{i_{m-1}}^{r_{m-1}}*\overline t_{i_{m-1}} * t_{i_m}\Bigr)_r$, $\beta = b_{i_{m}}^{s_{i_{m}}} $, and \\ $\gamma=  \Bigl(b_{i_{m}}^{r_{i_{m}}} * \overline t_{i_{m}} * f_*\bigl(\prod\limits_{i={m+1}}^\infty c_i\bigr)\Bigr)_r  $.

Since Equations (\ref{1}), (\ref{4}), and (\ref{3}) are reduced; Equation (\ref{2}) is reduced.

\end{proof}

We can now complete the proof of Lemma \ref{key}.  Suppose that $T|_{[0,s]}$ is a non-constant tail of $f_*(a)_r$.  Then we can choose ${m_0}$ sufficiently large such that $\diam\Bigl(b_{i_{m_0-1}}^{r_{i_{m_0-1}}} * \overline t_{i_{m_0-1}} * f_*\bigl(\prod\limits_{i={{m_0}}}^\infty c_i\bigr)\Bigr) < \diam(T|_{[0,s]})$.  However, then Claim \ref{claim2} for $m= {m_0}$ would imply that $b_{m_0}^{r_{m_0}}$ is a subpath of $T$ which contradicts our choice of $r_{m_0}$.

\end{proof}

\begin{cor}\label{corkey}Let $X$ be a planar Peano continuum and $\phi: \mathbb H \to \pi_1(X, x_0)$ be a homomorphism with uncountable image. If $k$ is a line in the plane and $\widehat T_k\circ \pi_{k*}\circ\phi = f_*$ for some continuous map $f:\textbf{E} \to X_k$, then  $T_k$ is a tail for $\bigl(\pi_{k*}\circ \phi(a)\bigr)_r$ where $a$ is chosen as in Lemma \ref{key}.
\end{cor}

\begin{proof}
Note that $(\pi_{k*}\circ \phi(a))_r = (T_k*f_*(a)_r*\overline T_k)_r$.
Then Lemma \ref{toobig1} and Proposition \ref{key} imply that $(T_k*f_*(a))_r* \overline T_k $ is a reduced path.

\end{proof}

Then Proposition \ref{pheadtail} follows from Corollary \ref{corkey}
and Lemma \ref{kreduced}.

\begin{prop}\label{pmaximalheadtail}Let $X$ be a planar Peano continuum, $\phi: \mathbb H \to \pi_1(X, x_0)$ be a homomorphism with uncountable image, and $k_1, k_2$ lines in the plane. Then there exists a path $\alpha$ in $X$ such that $\pi_{k}(\alpha) = T_{k}$ where $T_{k}$ is the path such that $\widehat T_{k}\circ \pi_{k*}\circ\phi$ is induced by a continuous map and $k\in \{k_1,k_2\}$.
\end{prop}

\begin{proof}
If $k_1, k_2$ are parallel this is an immediate consequence of Corollary \ref{corkey}.  Hence we will assume that $k_1, k_2$ are non-parallel.

Let $f_j$ by the continuous map such that  $f_{j*} = \widehat T_{k_j} \circ (\pi_{{k_j}*}\circ\phi)$.

We will now show how to modify the proof of Proposition \ref{key} to find a single $a$ such that $T_{k_j}$ is tail for $\bigl(\pi_{{k_j}*}\circ \phi(a)\bigr)_r$ for either $j=1,2$.

Let $b_{n,j}$ and $t_{n,j}$ be subpaths of $f_{j*}(a_n)_r$ such that $f_{j*}(a_n)_r =_p t_{n,j}* b_{n,j}*\overline{t}_{n,j}$  where $b_{n,j}$ is the core of $f_{j*}(a_n)_r $.

We may assume that $b_{n,j}$ is an essential loop for both $j=1,2$; by passing to a subsequence if necessary.  Since $f_j$ is continuous, the paths $t_{n,j}$ and $f_{j*}(a_n)_r$ must both form null sequences.  We may choose an increasing sequence $i_{n}$ such that  $\diam{(t_{i_{n},j})}$ is decreasing and $\diam{(f_{j*}(a_n)_r)} < \dfrac{\diam{(t_{i_{n-1},j})}}2$ for all $n\geq i_{n}$.  The subsequence $i_n$ does not depend on $j$.

%Let $\alpha_{n,j}$ be a path in $X$ such that $\pi_{k_j}\circ \alpha_{n,j} = _p b_{n,j}$ and $b_{n,j}' = \pi_{k_{j^c}}\circ \alpha_{n,j}$.

There exists an $r_{n,j}$ such that for some $A_{n,j}$ and $ B_{n,j}$,
$$w^{A_{n,j}}_{B_{n,j}}(T_{k_j}) < r_{n,j}\leq  w^{A_{n,j}}_{B_{n,j}}(b_{i_{n,j}}*\cdots* b_{i_{n,j}}),$$
the concatenation of $r_{n,j}$ copies of $b_{i_{n,j}}$.

Fix $\beta_{k_j}$ a $(k_1,k_2)$-reduced path such that $\pi_{k_j}\circ \beta_{k_{j}}=_p T_{k_j}$.  There exists an $r_{n,j}^{'}$ such that for some $A_{n,j}^{'}$ and $ B_{n,j}^{'}$ disjoint half planes with boundary parallel to $k_j$, we have
$$w^{A_{n,j}^{'}}_{B_{n,j}^{'}}(\pi_{k_j}\circ \beta_{k_{j^c}}) +4 < r_{n,j}'\leq  w^{A_{n,j}^{'}}_{B_{n,j}^{'}}(b_{i_{n,j}}*\cdots* b_{i_{n,j}})$$
the concatenation of $r_{n,j}'$ copies of $b_{i_{n,j}}$, where $j^c = \{1,2\}\backslash \{j\}$

Let $r_n = \max \{r_{n,1}, r_{n,2}, r_{n,1}', r_{n,2}'\}$.

We will inductively define integers $s_{n,j}$ and homotopy classes $c_n$ in $\mathbb H$.  Let $s_{1,j} = 0$ for $j = 1,2$ and  $c_1$ be the identity in $\mathbb H$.  Suppose $s_{n,j}$ and $c_i$ are defined for $1\leq i<n$ and $j = 1,2$.

Again, there exists an $s_{n,j}$ such that for some $A^{''}_{n,j}$ and $ B^{''}_{n,j}$, $$w^{A^{''}_{n,j}}_{B^{''}_{n,j}}(f_{j*}(\prod\limits_{i=1}^{n-1}(c_i))_r*t_{i_n,j}) < w^{A^{''}_{n,j}}_{B^{''}_{n,j}}(b_{i_n,j}*\cdots* b_{i_n,j}),$$
the concatenation of $s_{n,j}$ copies of $b_{i_n,j}$ for $j= 1,2$.  Let $s_n = \max \{s_{n,1}, s_{n,2}\}$ and $c_n = a_{i_n}^{r_n+s_n}$.  Let $a = \prod\limits_{n=1}^\infty c_n$.

Fix $h:I \to X$, a $(k_1,k_2)$-reduced path representative of $\phi(a)$. We can then apply the remainder of the proof of Lemma \ref{key} without change to find terminal supaths $\alpha_1, \alpha_2$ of $h$ mapping to $\overline{T}_{k_1}, \overline{T}_{k_2}$ respectively.

We may assume that $\alpha_1 =_p \gamma *\alpha_2$ for some subpath $\gamma$ of $h$. Suppose that $\pi_{k_2}\circ \gamma$ is non-degenerate.  By our construction of $a$,  $b_{i_n, 2}^{r_n}$ is a subpath of $\pi_{k_2}\circ \gamma$ for some $n$.

Since $\pi_{k_1}\circ \alpha_1 =_p \pi_{k_1}\circ \beta_{k_1}$, we can apply Lemma \ref{different delineations} and Lemma \ref{preserve0} to obtain $ w^{A_{n,j}^{'}}_{B_{n,j}^{'}}( \pi_{k_2}\circ \alpha_1) \leq w^{A_{n,j}^{'}}_{B_{n,j}^{'}}(\pi_{k_2}\circ \beta_{k_{1}}) +4 $.

Hence $w^{A_{n,j}^{'}}_{B_{n,j}^{'}}( \pi_{k_2}\circ \gamma)\leq w^{A_{n,j}^{'}}_{B_{n,j}^{'}}( \pi_{k_2}\circ \alpha_1) < r_n$ and $r_n \leq w^{A_{n,j}^{'}}_{B_{n,j}^{'}}(b_{i_n,2}^{r_n}) \leq w^{A_{n,j}^{'}}_{B_{n,j}^{'}}( \pi_{k_2}\circ \gamma)$ which is the desired contradiction.

Thus $\pi_{k_2}\circ \alpha_1 =_p T_{k_2}$ and $\pi_{k_1}\circ \alpha_1 =_p T_{k_1}$ as desired.

\end{proof}

We will now show that $\alpha$ is the path such that $\widehat\alpha
\circ\phi$ is induced by a continuous map.

\begin{lem}  Let $\alpha_k:I\to X$ be a path with the property that $\pi_k\circ\alpha_k = T_k$, up to reparametrization. Let $U$ be a $\pi_k$-saturated
neighborhood of $\pi_k^{-1}(T_k(1))$. For sufficiently large
$n$, every element of $\widehat\alpha\circ\phi(\mathbb H_n)$ has a representative in $U$.

\end{lem}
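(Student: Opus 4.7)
The plan is to leverage continuity of the factored map $f:\textbf{E}\to X_k$ satisfying $f_* = \widehat{T}_k\circ \pi_{k*}\circ\phi$, together with Lemma \ref{kreduced}, to lift loop representatives from $X_k$ back up into $U\subset X$. Throughout, the fact that $U$ is $\pi_k$-saturated will be what converts image containment in $X_k$ into image containment in $X$.

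First I would set $V := \pi_k(U)$. Since $\pi_k$ is a quotient map and $U$ is open and $\pi_k$-saturated, $V$ is open in $X_k$ with $\pi_k^{-1}(V) = U$, and $V$ contains $f(0) = T_k(1)$. Because $\{\textbf{E}_n\}$ forms a neighborhood basis at $0\in\textbf{E}$ and $f$ is continuous at $0$, there exists $n_0$ with $f(\textbf{E}_n)\subset V$ for all $n\geq n_0$. Then for any $x\in\mathbb H_n$ with $n\geq n_0$ and any representative $\sigma$ of $x$ lying in $\textbf{E}_n$, the loop $f\circ\sigma$ represents $f_*(x)$ and is contained in $V$.

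Next, fix a representative $h$ of $\phi(x)$ and set $g = \overline{\alpha_k}*h*\alpha_k$, a representative of $\widehat{\alpha_k}\circ\phi(x)$. Then $\pi_k\circ g = \overline{T}_k*(\pi_k\circ h)*T_k$ represents $\widehat{T}_k\circ\pi_{k*}\circ\phi(x) = f_*(x)$ in $\pi_1(X_k,T_k(1))$. Since $X_k$ is a one-dimensional Peano continuum, the reduced representative of $[\pi_k\circ g] = [f\circ\sigma]$ is unique up to reparametrization; moreover, reduction in a one-dimensional space replaces nullhomotopic subloops by constant paths and hence can only shrink the image. Thus the reduced representative $\rho$ of $\pi_k\circ g$ lies in the image of $f\circ\sigma$ and in particular inside $V$.

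Finally, I would apply Lemma \ref{kreduced} to $g$: this yields a path $\tilde g$ homotopic to $g$ such that $\pi_k\circ\tilde g = \rho$ up to reparametrization. Hence $\pi_k\circ\tilde g$ takes values in $V$, and saturation gives $\tilde g(I)\subset \pi_k^{-1}(V) = U$. Thus $\tilde g$ is a representative of $\widehat{\alpha_k}\circ\phi(x)$ with image in $U$, completing the proof. The main obstacle is really the middle paragraph: verifying that the reduced representative of $\pi_k\circ g$ inherits the image containment from $f\circ\sigma$ and not merely represents the same class. Once this image-preservation observation is secured, the rest is bookkeeping around the saturation hypothesis and Lemma \ref{kreduced}.
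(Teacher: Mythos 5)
Your proof is correct, but it takes a genuinely different route from the paper's. The paper argues through the weight function: it picks a smaller open saturated neighborhood $U'$ with closure in the interior of $U$, uses continuity of the map inducing $\widehat{T}_k\circ\pi_{k*}\circ\phi$ to get $w^{A_k}_{B_k}(\widehat{T}_k\circ\pi_{k*}\circ\phi(b))=0$ for $b\in\mathbb H_n$ with $n$ large (where $A=\pi_k^{-1}(T_k(1))$ and $B=\overline{U'}^{\,c}$), pulls this back to $w^A_B(\widehat\alpha\circ\phi(b))=0$ via the weight-preservation Lemma \ref{preserve} applied to a $k$-reduced representative, and then invokes the fact that a loop of weight zero against $(A,U^c)$ has a representative in $U$. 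You instead transfer the smallness by image containment: since reduction in the one-dimensional space $X_k$ only deletes nullhomotopic subloops and reduced representatives are unique, the reduced representative of $[\pi_k\circ g]=[f\circ\sigma]$ has image inside $f(\textbf{E}_n)\subset\pi_k(U)$, and Lemma \ref{kreduced} lifts it to a homotopic (rel endpoints) representative $\tilde g$ of $\widehat{\alpha_k}\circ\phi(x)$ whose $\pi_k$-image lies in $\pi_k(U)$, hence by saturation $\tilde g(I)\subset U$. Both arguments rest on the same pillars (injectivity of $\pi_{k*}$, Lemma \ref{kreduced}, uniqueness of reduced representatives), but yours bypasses the weight machinery and is arguably more direct; the paper's version has the advantage of reusing the weight formalism it has already set up. One small point to tidy: the hypothesis only says $U$ is a saturated \emph{neighborhood}, and $\pi_k(U)$ is open only when $U$ is; since the decomposition is upper semicontinuous you should first replace $U$ by the open saturated union of all decomposition elements contained in $\operatorname{int}(U)$, which still contains $\pi_k^{-1}(T_k(1))$ --- the paper makes the same implicit reduction.
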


\begin{proof}
If $g$ is a loop based at a point $y\in\pi_k^{-1}(x)$ and
$w^{\pi_k^{-1}(x)}_{U^c}([g])=0$, then $g$ has a representative in $U$.

Let $U$ be a $\pi_k$-saturated neighborhood of $\pi_k^{-1}(T_k(1))$.
Let $U'$ by an open $\pi_k$-saturated neighborhood of
$\pi_k^{-1}(T_k(1))$ with closure contained in the interior of $U$.
We must show that, for some sufficiently large $n$,
$w^A_B(\widehat\alpha\circ\phi(b))=0$ for all $b\in \mathbb H_n$ where
$A=\pi_k^{-1}(T_k(1))$ and $B= \overline {U'}^c$.

Since $\widehat T_k\circ\pi_{k*}\circ\phi$ is induced by a
continuous map, there exists an $N$ such that, for all $n>N$,
$\widehat T_k\circ\pi_{k*}\circ\phi(H_n)
\subset\pi_1(\pi_k({U'}),T_k(1))$. Hence $w^{A_k}_{B_k}(\widehat
T_k\circ\pi_{k*}\circ\phi(b)) = 0$ for all $b\in \mathbb H_n$ where $n>N$.

For $b\in\mathbb H_n$, where $n>N$, let $f$ be a $k$-reduced representative
of $\widehat\alpha\circ\phi(b)$. Then the $w^A_B
(\widehat\alpha\circ\phi(b)) \leq w^A_B( f)  = w^{A_k}_{B_k}
(\pi_k\circ f) = w^{A_k}_{B_k}(\widehat T_k\circ\pi_{k*}\circ
\phi(b))= 0$.

\end{proof}

\begin{thm} \label{contplanar}Let $\phi:\mathbb H \to\pi_1(X,x_0)$ a
homomorphism into the fundamental group of a planar Peano continuum
$X$. Then there exists a continuous function $f:(\textbf{E},0) \to
(X,x)$ and a path $\alpha:(I,0,1)\to (X,x_0,x)$, such that $f_* =\widehat\alpha \circ \phi$.  Additionally, if the image of $\phi$ is uncountable then $\alpha$ is unique up to homotopy relative to its endpoints.
\end{thm}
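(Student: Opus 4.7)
The plan is to combine Proposition \ref{pmaximalheadtail} with the preceding lemma. Choose two non-parallel lines $k$ and $l$ in the plane; Proposition \ref{pmaximalheadtail} then supplies a path $\alpha:(I,0,1)\to (X,x_0,x)$ (with $x=\alpha(1)$) such that $\pi_k\circ\alpha=T_k$ and $\pi_l\circ\alpha=T_l$ up to reparametrization. For each $n$, pick a loop $\mathbf b_n$ at $x$ representing $\widehat\alpha\circ\phi(a_n)$, and define $f:(\textbf{E},0)\to (X,x)$ by sending $\mathbf a_n$ to $\mathbf b_n$. Once I verify that $\{\mathbf b_n\}$ can be chosen as a null sequence, the convention from the definitions section gives continuity of $f$ together with $f_*(a_n)=\widehat\alpha\circ\phi(a_n)$.

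For the null-sequence property I would apply the preceding lemma simultaneously with respect to $k$ and $l$. The key geometric fact is that, because $k$ and $l$ are non-parallel, the fibers $\pi_k^{-1}(\pi_k(x))$ and $\pi_l^{-1}(\pi_l(x))$ are line segments through $x$ in transverse directions, hence meet in exactly $\{x\}$. By compactness of these fibers, any $\pi_k$-saturated neighborhood $U_k$ of the first and $\pi_l$-saturated neighborhood $U_l$ of the second can be shrunk so that $U_k\cap U_l$ lies inside any preassigned neighborhood of $x$. For each large $n$, I would take $\mathbf b_n$ to be a $(k,l)$-reduced representative of $\widehat\alpha\circ\phi(a_n)$, which exists by the remark at the end of Subsection \ref{delin}. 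The weight computation from the lemma, applied once for $k$ and once for $l$, shows that $\mathbf b_n\subseteq U_k\cap U_l$ for $n$ large, and hence $\diam(\mathbf b_n)\to 0$.

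To upgrade $f_*(a_n)=\widehat\alpha\circ\phi(a_n)$ to the identity $f_*=\widehat\alpha\circ\phi$ on all of $\mathbb H$, I would apply $\pi_{k*}$ and use injectivity. From $\pi_k\circ\alpha=T_k$ we get $\pi_{k*}\circ\widehat\alpha=\widehat{T_k}\circ\pi_{k*}$, so $\pi_{k*}\circ\widehat\alpha\circ\phi=\widehat{T_k}\circ\pi_{k*}\circ\phi$ is, by the defining property of $T_k$, induced by a continuous map $g_k:\textbf{E}\to X_k$ furnished by Theorem \ref{hto1d}; and $\pi_{k*}\circ f_*=(\pi_k\circ f)_*$. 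The two continuous maps $\pi_k\circ f$ and $g_k$ into the one-dimensional Peano continuum $X_k$ agree on the generators $a_n$, so Eda's one-dimensional machinery forces the induced homomorphisms to coincide on all of $\mathbb H$; injectivity of $\pi_{k*}$ then promotes the identity to $\pi_1(X,x)$. For uniqueness of $\alpha$ when $\im(\phi)$ is uncountable, any competing path $\alpha'$ must agree with $\alpha$ after $\pi_k$ and after $\pi_l$ by the one-dimensional uniqueness clause, forcing $\alpha(1)=\alpha'(1)$ via transversality; then $[\alpha*\overline{\alpha'}]$ lies in the kernel of the injective $\pi_{k*}$, so $\alpha\simeq\alpha'$ rel endpoints.

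The principal obstacle I anticipate is the null-sequence step: the preceding lemma controls $\pi_k$- and $\pi_l$-saturated neighborhoods separately, so the delicate point is the compactness argument exploiting transversality of the two decomposition fibers through $x$, together with arranging a single $(k,l)$-reduced representative on which both weight inequalities apply. The extension from generators to all of $\mathbb H$ is a secondary difficulty, requiring a careful invocation of the uniqueness aspects of the one-dimensional theory and of the injectivity of the delineation homomorphism.
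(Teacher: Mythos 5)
Your proposal is correct and follows essentially the same route as the paper: obtain $\alpha$ from Proposition \ref{pmaximalheadtail}, use the preceding lemma for both $k$ and $l$ together with the fact that the two decomposition fibers through $\alpha(1)$ meet only at $\alpha(1)$ to place $(k,l)$-reduced representatives in $U_k\cap U_l$, and derive uniqueness from the uniqueness of $T_k$, $T_l$ and the injectivity of $\pi_{k*}$. You merely make explicit two points the paper leaves implicit, namely the compactness/transversality argument for shrinking $U_k\cap U_l$ and the passage from agreement on the generators $a_n$ to agreement on all of $\mathbb H$ via $\pi_{k*}$.
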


\begin{proof}
For $k$ and $l$ nonparallel lines in the plane, there exists a path $\alpha$ in $X$ such that $\pi_k(\alpha) = T_k$ and $\pi_l(\alpha) = T_l$, by Lemma \ref{pmaximalheadtail}.

It is sufficient to show that for any neighborhood $U$ of
$\alpha(1)$ there exists an $N$ such that every element of $\widehat\alpha \circ \phi
(\mathbb H_n)$ has a representative in $U$.

This is done by finding $U_l$
and $U_k$ such that $U_k\cap U_l\subset U$ and $U_l$ is a
$\pi_l$-saturated neighborhood of $\pi_l^{-1}\circ\pi_l(\alpha(1))$
and $U_k$ is a $\pi_k$-saturated neighborhood of $\pi_k^{-1}\circ\pi_k(\alpha(1))$.  The uniqueness follows from the uniqueness of $T_k$ and the injectivity of $\pi_{k*}$.
\end{proof}

\section{Homomorphisms induced by continuous functions}\label{sec2}

Throughout this section, we will assume all paths are from the unit interval $[0,1]$.  We will use $j$ for the inclusion map of subsets of $X$ into $X$. When there is no chance of confusion, we will allow the domain of $j$ to change without comment.

\begin{defn}
Let $\phi: \pi_1(X,x_0)\to \pi_1(Y,y_0)$ be a homomorphism. We will say $x$ is $\phi$-bad if $\phi\circ\hat\alpha\circ j_*(\pi_1(B_\epsilon(x),x))$ is nontrivial for all $\epsilon>0$ where $\alpha$ is a path from $x_0$ to $x$.  We will say that $X$ is $\phi$-bad, if it is at each point. We will write $B_\phi(X)$ for the set of $\phi$-bad points of $X$.  If $X=Y$ and $\phi= \text{id}$, then this definition is the definition given in Section \ref{sec3} for the \emph{bad} set of $X$ and we will simply write $B(X)$.

Note this is independent of the chosen $\alpha$.
\end{defn}

Cannon and Conner in \cite {cc4} defined $B(X)$ for a certain class of subset of the plane which where homotopy equivalent to planar Peano continua.  It is a trivial exercise to see that these definitions are compatible.  Conner and Eda in \cite{ce} defined a similar set for one-dimensional spaces which they referred to as the set of \textit{wild} points.  While both terms have appeared in the literature, here we will use the term bad.

The following theorem, while not explicitly stated, follows from the proof of Theorem 1.2 in \cite{eda3}.

\begin{thm}[Eda, \cite{eda3}]\label{retractEda-thm}

Let $\phi:\pi_1(X, x_0) \to G$ be a homomorphism from the fundamental group of a one-dimensional Peano continuum $X$ to a group $G$ which is a subgroup of an inverse limit of free groups.  Suppose that $x_0 \in B_\phi(X)$.  Then there exists a retraction $r: (X,x_0) \to (X,x_0)$ such that $ B_\phi(X)  = B\bigl(r(X)\bigr)$ and $\phi([f]) = \phi([r\circ f])$ for all loops $f$ based at $x_0$.

\end{thm}
%=  B_{\phi\circ r_*}(X)

\begin{prop}\label{corr} Let $X$ and $Y$ be one-dimensional or planar Peano continua.  Suppose that $\phi : \pi_1(X, x_0) \to \pi_1(Y, y_0)$ is a homomorphism between their fundamental groups. Then for every path $\alpha: (I,0,1)\to (X,x_0,x)$ such that $x\in B_{\phi}(X)$, there exists a unique (up to homotopy rel endpoints) path $\psi(\alpha): (I,0,1)\to (Y,y_0,y)$  where $\psi(\alpha)$ depends only on the homotopy class (rel endpoints) of $\alpha$  and $y$ depends only on $x$ which enjoys the following property.

\begin{equation}\label{univ_property}\forall g: (\textbf E, 0) \to (X,\alpha(1)),\hspace{1mm} h:(\textbf E, 0) \to (Y,\psi(\alpha)(1))  \text{ such that } \phi\circ\hat{\overline{\alpha}}\circ g_* =\hat{\overline{\psi(\alpha)}}\circ h_*.\end{equation}
\end{prop}

%\begin{prop}\label{corr'} Let $X$ and $Y$ be one-dimensional or planar Peano continua. Let $\phi : \pi_1(X, x_0) \to \pi_1(Y, y_0)$ be a homomorphism . For every path $\alpha: (I,0,1)\to (X,x_1,x_2)$ such that $x_i\in B_{\phi}(X)$, there exists a unique (up to homotopy rel endpoints) path $\mf a: (I,0,1)\to (Y,y_1,y_2)$  where $\mf a$ depends only on the homotopy class (rel endpoints) of $\alpha$  and $y_i$ depends only on $x_i$ which enjoys the property that for every continuous map $g: (\textbf E, 0) \to (X,\alpha(1))$ there exists $h:(\textbf E, 0) \to (Y,\mf a(1))$ such that $\phi\circ\hat{\overline{\alpha}}\circ g_* =\hat{\overline{\mf a}}\circ h_*$. \end{prop}

If $X$ and $Y$ one-dimensional, then this is Lemma 5.1 in \cite{eda}.  The proof we present here is essentially that same as Eda's proof.  We give the proof for the planar case here to establish notation and to show the insignificant changes required in the planar situation.

\begin{proof}
Since $x\in B_\phi(X)$, there exists a sequence of loops, $\{\mathbf b_i\}$, based at $x$ which converge to $x$ with the property that $[\alpha*\mathbf b_i*\overline \alpha]$ (or the conjugation by any other path from $x_0$ to $x$) map non-trivially under $\phi$.  Let $g$ be a continuous  map which sends $\mathbf a_i$ to $\mathbf b_i$.  Theorem \ref{contplanar} implies that there exists a unique $\psi(\alpha):(I,0,1)\to (Y,y_0,y)$ and an $h:(\textbf{E},0)\to (Y,y)$ such that $\phi\circ\widehat{\overline\alpha}\circ g_* = \widehat{\overline {\psi(\alpha)}}\circ h_*$.

Suppose $g':( \textbf{E}, 0)\to (X,x)$ is any continuous function.  Let $g'':(\tb E, 0)\to (X,x)$ be the function which sends $\textbf{E}_e$ to $X$ by $g$ and sends $\textbf{E}_o$ to $X$ by $g'$.  Notice $g''$ is
continuous since $g(0)= g'(0) = x$.  Then there exists $\mf c$ and $h'':(\textbf{E},0)\to (Y,y)$ such at $\phi\circ\widehat{\overline\alpha}\circ g''_* = \widehat{\overline{\mf c}} \circ h''_*$.

Then $ \widehat{\overline {\psi(\alpha)}}\circ h_* = \phi\circ \widehat{\overline{\alpha}}\circ g_* = \phi\circ
\widehat{\overline{\alpha}}\circ (g''|_{\textbf{E}_e})_* = \widehat{\overline{\mf c}} \circ (h''|_{\textbf{E}_e})_*$. Then the
uniqueness in Theorem \ref{contplanar} implies that $\psi(\alpha)$ is homotopic to $\mf c$.  Hence $\phi\circ
\widehat{\overline{\alpha}}\circ g'_* = \phi\circ \widehat{\overline{\alpha}}\circ (g''|_{\textbf{E}_o})_* =
\widehat{\overline{\psi(\alpha)}} \circ (h''|_{\textbf{E}_o})_*$.

Now we must only show that $y$ depends only on $x$. Suppose $\alpha':(I,0,1)\to (X,x_0,x)$ is a path which is not necessarily
homotopic to $\alpha$.  Then for some path $\psi(\alpha')$ and $h'$, we have $\phi\circ\widehat{\overline\alpha'}\circ g_* = \widehat{\overline{\psi(\alpha')}} \circ h'_*$.

\begin{align*}
\widehat{\overline{\psi(\alpha')}} \circ h'_*(u) &= \phi\circ\widehat{\overline\alpha'}\circ g_*(u)\\
&= \phi\circ\hat{\overline{\alpha'*\overline{\alpha}}}\circ\hat{\overline {\alpha}}\circ g_*(u)\\
&= \phi(\alpha'*\overline{\alpha})\cdot\phi\circ\hat{\overline{\alpha}}\circ g_*(u)\cdot (\phi(\alpha'*\overline{\alpha}))^{-1} \\ &= \phi(\alpha'*\overline{\alpha})\cdot \widehat{\overline {\psi(\alpha)}}\circ h_*(u)\cdot \phi^{-1}(\alpha'*\overline{\alpha})
\end{align*}

Hence $h'(\mathbf{a}_n)$ is freely homotopic to $h(\mathbf{a}_n)$ for all $i$.

If $\psi(\alpha')(1)\neq \psi(\alpha)(1)$, then we can find $n$ sufficiently large such that $h(\mathrm{a}_n)$ and $ h'(\mathrm{a}_n)$ can be separated by disjoint balls in $Y$. This contradictions Lemma \ref{homotoped off}.  (We can choose $n$
sufficiently large such that the disjoint balls satisfy the hypothesis of Lemma \ref{homotoped off}.)
\end{proof}

\begin{thm}[Meistrup, \cite{precm}]\label{arcreduced}
Every one-dimensional Peano continuum is homotopy equivalent to a one-dimensional Peano continuum $X$ such that $X$ is a finite connected graph or $X\backslash B(X)$ is null sequence of disjoint open arcs with end points in $B(X)$.
\end{thm}

Such a continuum is called \emph{arc-reduced}.

We are now ready to prove:

\begin{thm}\label{cont1dim}
Let $\phi: \pi_1(X,x_0)\to \pi_1(Y,y_0)$ be a homomorphism where $X$ is a
one-dimensional Peano continuum and $Y$ is a planar Peano continua.  Then there exists a path $T: (I,0,1)\to (Y,y_0,y)$ and a
continuous function $f:X\to Y$ such that $\hat T\circ\phi = f_*$.
\end{thm}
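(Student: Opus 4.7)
The plan is to adapt Eda's original proof of the one-dimensional version, replacing Eda's one-dimensional Hawaiian earring theorem and separation arguments with Theorem \ref{contplanar}, Proposition \ref{corr}, and Lemma \ref{homotoped off}. One-dimensionality of $X$ is still freely available (only the target has been weakened), so reduced representatives of paths in $X$ and their heads/tails still behave as in Cannon--Conner. I first define $f$ pointwise: for $x\in B_\phi(X)$, Proposition \ref{corr} applied to any path $\alpha$ from $x_0$ to $x$ produces a point $\mf a(1)\in Y$ depending only on $x$, and I set $f(x):=\mf a(1)$. For $x\notin B_\phi(X)$, I choose a neighborhood $U_x$ on which $\phi\circ\widehat{\overline{\alpha}}\circ j_*$ annihilates $\pi_1(U_x)$; this forces $f(x)$ to be a canonical value, constructed either as the limit of $f(x_n)$ along a sequence of bad points $x_n\to x$ or by directly lifting a path ending at $x$ (using that small loops at $x$ are $\phi$-trivial). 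Taking $T:=\mf a$ for $\alpha$ the constant path at $x_0$ yields the desired path from $y_0$ to $f(x_0)$.

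Next, I would verify continuity of $f$. Away from $B_\phi(X)$ the argument is routine once the neighborhoods $U_x$ have been chosen. At a bad point $x$, suppose $x_n\to x$; if $f(x_n)\not\to f(x)$ then after passing to a subsequence, $f(x)$ and the $f(x_n)$ can be separated in $Y$ by disjoint planar balls. A single Hawaiian-earring map at $x$ assembled from small loops at $x$ together with arcs reaching the $x_n$ then yields, via Proposition \ref{corr}, two candidate target paths whose endpoints are forced to coincide, contradicting the separation. This is precisely the step where Lemma \ref{homotoped off} takes over the role played by Eda's one-dimensional separation argument.

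Finally, I would establish the identity $\widehat T\circ\phi=f_*$. Using the uniqueness of reduced representatives in the one-dimensional space $X$, any loop $\gamma$ at $x_0$ can be analyzed as a combination of arcs between bad points together with Hawaiian-earring type loops at bad points. Applying the equation $\phi\circ\widehat{\overline{\alpha}}\circ g_*=\widehat{\overline{\mf a}}\circ h_*$ from Proposition \ref{corr} at each bad point and tracking basepoint changes yields $f_*([\gamma])=\widehat T(\phi([\gamma]))$; the uniqueness clause in Theorem \ref{contplanar} handles coherence across the various choices. The principal obstacle is continuity of $f$ at a bad point approached by sequences with divergent target behavior: Eda's original proof exploited the rigidity of one-dimensional $Y$, but here $Y$ is only planar and we must route the separation step through Lemma \ref{homotoped off}. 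A secondary subtlety is the consistent definition of $f$ on non-bad points that are accumulated by bad points, which requires the homotopy-class-dependence clause in Proposition \ref{corr}.
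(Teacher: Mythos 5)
Your overall strategy is the one the paper follows (adapt Eda, feed Theorem \ref{contplanar} and Proposition \ref{corr} into the bad points, use Lemma \ref{homotoped off} for the separation arguments), but there are two genuine gaps. First, you skip the reduction that makes the definition of $f$ off the $\phi$-bad set possible at all: Eda's argument, which the paper reuses verbatim, first replaces $X$ by an arc-reduced continuum (Meilstrup) and then deformation retracts so that $B(X)=B_\phi(X)$ and $X\setminus B(X)$ is a null sequence of open arcs with endpoints in $B(X)$. Only then is $f$ on a non-bad point defined, namely by transporting each arc $m_i$ to a chosen $(k,l)$-reduced representative $\mf m_i$ of $[\overline{\psi(\beta_i)}*\psi(\beta_i*m_i)]$ (Lemmas \ref{eda3.3}, \ref{eda3.4} give independence of $\beta_i$). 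Your claim that $\phi$-triviality of small loops ``forces $f(x)$ to be a canonical value'' is not correct: only the homotopy class of $f$ on such a region is forced, the pointwise values are a choice, and your fallback of taking limits of $f(x_n)$ over bad $x_n\to x$ fails whenever $x$ is not in the closure of $B_\phi(X)$.

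Second, your verification of $\widehat T\circ\phi=f_*$ by decomposing a loop into ``arcs between bad points together with Hawaiian-earring type loops at bad points'' and inducting over the pieces does not go through: a reduced loop can meet $B(X)$ in a Cantor set, so there is no such finite or even countable concatenation to induct on. The paper instead proves two separate homotopies and composes them: Lemma \ref{psi2phi} shows $\psi(\alpha)\simeq\phi([\alpha])_r*T$ directly from Proposition \ref{corr} plus the uniqueness clause of Theorem \ref{contplanar} (no decomposition of $\alpha$ needed), and Proposition \ref{psi2f} shows $T*(f\circ\alpha)\simeq\psi(\alpha)$ via an explicit homotopy whose continuity rests on Lemma \ref{lem1d} and whose identification in the planar case is obtained by pushing everything down to the one-dimensional decomposition spaces $X_k$, $X_l$ and invoking injectivity of $\pi_{k*}$. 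That delineation step is the planar-specific ingredient your outline is missing; without it you have no way to compare the pointwise-defined $f\circ\alpha$ with the algebraically defined $\psi(\alpha)$ inside a planar target. Your continuity argument at bad points (separating balls plus Lemma \ref{homotoped off}) is essentially the paper's wedge-point argument and is fine.
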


When both $X$ and $Y$ are one-dimensional, this is Theorem 1.2 in Eda's recent paper \cite{eda3}.  We will show how to adapt Eda's to the case where $Y$ is a planar Peano continuum.

\subsection{Proof of  Theorem \ref{cont1dim}}

Let $\phi: \pi_1(X,x_0)\to \pi_1(Y,y_0)$ be a homomorphism where $X$ is a one-dimensional Peano continuum and $Y$ is a planar Peano continua.

\textbf{Observation:} Let $r:X\to X$ be any retraction of $X$ such that $\ker{r_*}\subset \ker{\phi}$.  Then there exists a unique homorophism $\phi': \pi_1\bigl(r(X), r( x_0)\bigr)\to \pi_1(Y, y_0)$ such that $\phi\bigl([\alpha]\bigr) = \phi'\bigl([r\circ\alpha]\bigr)$ for every loop $\alpha$ in $X$.  In addition, if there exists a path $T: (I,0,1)\to (Y,y_0,y)$ and a continuous function $f':r(X)\to Y$ such that $\hat T\circ\phi' = f'_*$; then $$\hat T\circ\phi = \hat T\circ(\phi'\circ r_*) = f'_*\circ r_* = (f'\circ r)_*.$$

Thus by Theorem \ref{retractEda-thm} and  Theorem \ref{arcreduced}, we may assume that $B(X) = B_\phi(X)$ and $X$ is arc-reduced. Since we are only concerned about $\phi$ up to composition with a change of base point isomorphism, we may assume that $x_0\in B(X)$.

Fix $k,l$ non-parallel lines in the plane.  Then there exists paths $T_k,T_l$ in $\pi_k(Y), \pi_l(Y)$ respectively and functions $f_k: X \to \pi_k(Y)$,$f_l: X \to \pi_l(Y)$   such that $\hat T_k\circ\pi_{k*}\circ\phi = f_{k*}$ and $\hat T_l\circ\pi_{l*}\circ\phi = f_{l*}$

Since $X$ is arc-reduced, $X\backslash B(X)$ is a null sequence of disjoint open arcs, $A_i$.  For each i, fix a homeomorphism $m_i: (0,1)\to A_i$. Then extend $m_i$ to $[0,1]$ by sending $0,1$ to the corresponding endpoints of $A_i$ in $B(X)$.  Without loss of generality, we may assume that $f_k, f_l$ take each $m_i$ to a reduced path in $\pi_k(Y), \pi_l(Y)$ respectively.

To simplify notation; we will use $[f]_r$ to represent the reduced representative of $[f]$ when $f$ is a path in a one-dimensional continuum and $[f]_{k,l}$ to represent a $(k,l)$-reduced representative of $f$ when $f$ is a path a planar Peano continuum.

Proposition \ref{corr} gives us a function $\psi$ from the set of reduced paths with initial point $x_0$ and terminal point in $B(X)$ to the set of paths in $Y$ from $y_0$ to points in $B(Y)$.  We want to extended $\psi$ to reduced paths from $x_0$ to
arbitrary points of $X$.

Let $\mf m_i$ be a $(k,l)$-reduced representative of $[\overline{\psi(\beta_i)} * \psi(\beta_i*m_i)]$ where $\beta_i$ is a path from $x_0$ to $m_i(0)$.  The next two lemmas will show that $\mf m_i$ is independent of our chosen $\beta_i$.

The proofs we present here for the next two lemmas is essentially that same as Eda's proofs of the corresponding lemmas.

\begin{lem}\label{eda3.3}[Lemma 3.3, \cite{eda3}]
Let $\alpha, \beta $ be paths from $x_0$ to $x\in B(X)$.  Then $\phi(\alpha*\overline\beta)= [\psi(\alpha)*\overline{\psi(\beta)}]$.

\end{lem}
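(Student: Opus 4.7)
The plan is to fix a continuous map $g:(\textbf{E},0)\to (X,x)$ and apply Proposition~\ref{corr} twice, once along $\alpha$ and once along $\beta$. This produces continuous maps $h_\alpha,h_\beta:(\textbf{E},0)\to (Y,y)$ together with paths $\mf a,\mf b$ from $y_0$ to $y$ representing $\psi(\alpha)$ and $\psi(\beta)$ (their terminal points agree since $y$ depends only on $x=\alpha(1)=\beta(1)$), enjoying
\[
\phi\circ\widehat{\overline{\alpha}}\circ g_*=\widehat{\overline{\mf a}}\circ h_{\alpha*},\qquad
\phi\circ\widehat{\overline{\beta}}\circ g_*=\widehat{\overline{\mf b}}\circ h_{\beta*}.
\]
Crucially, I would choose $g$ so that the image of $\phi\circ\widehat{\overline{\alpha}}\circ g_*$ is uncountable: using $x\in B_\phi(X)$, I would pick a null sequence of loops $\mathbf{b}_n$ at $x$ with $\phi\circ\widehat{\overline{\alpha}}([\mathbf{b}_n])\ne 1$ and let $g$ send $\mathbf{a}_n$ to $\mathbf{b}_n$, exactly as in the proof of Proposition~\ref{corr}. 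This is the one place where the hypothesis on $x$ is used.

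The heart of the argument is a simple identity in $\pi_1(X,x_0)$: for every loop $u$ in $\textbf{E}$ at $0$,
\[
[\alpha*g\circ u*\overline{\alpha}]=[\alpha*\overline{\beta}]\cdot[\beta*g\circ u*\overline{\beta}]\cdot[\beta*\overline{\alpha}].
\]
Let $\eta$ be a loop at $y_0$ representing $\phi([\alpha*\overline{\beta}])$. Applying $\phi$ to both sides and substituting the two identities above yields
\[
\widehat{\overline{\mf a}}\circ h_{\alpha*}(u)=[\eta]\cdot\widehat{\overline{\mf b}}\circ h_{\beta*}(u)\cdot[\eta]^{-1}=\widehat{\overline{\eta*\mf b}}\circ h_{\beta*}(u)
\]
for every $u$. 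Thus the pair $(\eta*\mf b, h_\beta)$, like $(\mf a, h_\alpha)$, realizes the homomorphism $\phi\circ\widehat{\overline{\alpha}}\circ g_*$ in the form guaranteed by Theorem~\ref{contplanar}.

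To finish I would invoke the uniqueness clause of Theorem~\ref{contplanar}, which is applicable because of the uncountable-image condition secured in the first step. This forces $\mf a\simeq\eta*\mf b$ rel endpoints, whence $[\eta]=[\mf a*\overline{\mf b}]=[\psi(\alpha)*\overline{\psi(\beta)}]$. Taking inverses yields the desired identity $\phi([\beta*\overline{\alpha}])=[\psi(\beta)*\overline{\psi(\alpha)}]$. I expect the only genuine obstacle to be the bookkeeping around the uncountable-image condition; once that is in place everything else is a formal manipulation of the conjugation identity together with the output of Proposition~\ref{corr}.
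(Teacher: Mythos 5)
Your proposal is correct and follows essentially the same route as the paper: apply the correspondence of Proposition \ref{corr} along both $\alpha$ and $\beta$, rewrite $\widehat{\overline{\alpha}}$ as $\widehat{\overline{\alpha*\overline{\beta}}}\circ\widehat{\overline{\beta}}$ to produce the conjugation by $\phi(\alpha*\overline{\beta})$, and invoke the uniqueness clause of Theorem \ref{contplanar} to identify $\psi(\alpha)$ with $\phi(\alpha*\overline{\beta})_r*\psi(\beta)$. The only difference is that you explicitly arrange the uncountable-image hypothesis before citing uniqueness, a point the paper leaves implicit; that is a welcome bit of extra care, not a divergence.
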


\begin{proof}
For all continuous $h: (\tb E,0)\to (X,x)$, there exists $g,g': (\tb E,0)\to (Y,y)$ such that $\phi\circ\hat{\overline \beta}\circ h_* = \hat{\overline{\psi(\beta)}}\circ g_*$ and $\phi\circ\hat{\overline \alpha}\circ h_* = \hat{\overline{\psi(\alpha)}}\circ g'_*$. Then for all $u\in\mb H$,
\begin{align*}
\hat{\overline{\psi(\alpha)}} \circ g'_*(u) &= \phi\circ\hat{\overline {\alpha}}\circ h_*(u)\\
&= \phi\circ\hat{\overline{\alpha*\overline{\beta}}}\circ\hat{\overline {\beta}}\circ h_*(u)\\
&= \phi\bigl([\alpha*\overline{\beta}]\bigr) \cdot \phi\circ\hat{\overline{\beta}}\circ h_*(u)\cdot \bigl(\phi\bigl([\alpha*\overline{\beta}]\bigr)\bigr)^{-1} \\
&= \phi(\alpha*\overline{\beta})\cdot \hat{\overline{\psi(\beta)}}\circ g_*(u)\cdot \bigl(\phi(\alpha*\overline{\beta})\bigr)^{-1} \\ &=\hat{\overline{\phi(\alpha*\overline\beta)_{k,l}*\psi(\beta)}}\circ g_*(u)
\end{align*}

Then Theorem \ref{contplanar} implies that $\psi(\alpha)$ is homotopic to $\phi(\alpha*\overline\beta)_{k,l}*\psi(\beta)$ which completes the proof.
\end{proof}

\begin{lem}\label{eda3.4}[Lemma 3.4, \cite{eda3}]
The path $\mf m_i$ is independent (up to homotopy rel endpoints) of the path $\beta_i$ from $x_0$ to the initial point of $m_i$.
\end{lem}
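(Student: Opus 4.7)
My plan is to show that the path-homotopy class $[\overline{\psi(\beta_i)} * \psi(\beta_i*m_i)]$ is independent of $\beta_i$, after which the statement follows because $(k,l)$-reduction is a function on path-homotopy classes. The key input will be Lemma \ref{eda3.3}, applied twice: once to the pair of paths ending at $m_i(0)$ and once to the pair ending at $m_i(1)$. Both endpoints lie in $B(X)=B_\phi(X)$ (the former by hypothesis on $\beta_i$, the latter because $X$ is arc-reduced so the boundary of the arc $A_i$ lies in $B(X)$), so $\psi$ is defined on all of $\beta_i$, $\beta'_i$, $\beta_i*m_i$, $\beta'_i*m_i$.

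First I would fix a second path $\beta'_i$ from $x_0$ to $m_i(0)$ and apply Lemma \ref{eda3.3} to obtain
\[
\phi(\beta_i*\overline{\beta'_i})=[\psi(\beta_i)*\overline{\psi(\beta'_i)}],
\qquad
\phi\bigl((\beta_i*m_i)*\overline{(\beta'_i*m_i)}\bigr)=[\psi(\beta_i*m_i)*\overline{\psi(\beta'_i*m_i)}].
\]
Since $m_i*\overline{m_i}$ is null-homotopic rel endpoints, the two left-hand sides agree in $\pi_1(Y,y_0)$, giving the equality of loop classes
\[
[\psi(\beta_i)*\overline{\psi(\beta'_i)}]=[\psi(\beta_i*m_i)*\overline{\psi(\beta'_i*m_i)}].
\]
Prepending $\overline{\psi(\beta_i)}$ and appending $\psi(\beta'_i*m_i)$ (which is legal because, by the uniqueness in Proposition \ref{corr}, $\psi(\beta_i)$ and $\psi(\beta'_i)$ share their terminal point, as do $\psi(\beta_i*m_i)$ and $\psi(\beta'_i*m_i)$) and then cancelling the resulting inverse pairs yields
\[
[\overline{\psi(\beta_i)}*\psi(\beta_i*m_i)]=[\overline{\psi(\beta'_i)}*\psi(\beta'_i*m_i)].
\]

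Finally, since the $(k,l)$-reduced representative is a well-defined operation on path-homotopy classes (any two $(k,l)$-reduced representatives of the same class agree up to reparametrization, as follows from the characterization of $(k,l)$-reduction via minimal weight together with the injectivity of $\pi_{k*}$), this equality of classes forces the same $\mf{m_i}$ in both cases. I do not anticipate any serious obstacle: the argument is almost purely formal once Lemma \ref{eda3.3} is in hand, and the only subtlety to watch for is keeping track of endpoints so that the various concatenations and cancellations are legitimate as equalities in the fundamental groupoid rather than merely in $\pi_1(Y,y_0)$.
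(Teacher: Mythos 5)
Your proof is correct and is essentially the paper's argument: both apply Lemma \ref{eda3.3} twice (once to the pair of paths ending at $m_i(0)$ and once to the pair ending at $m_i(1)$), use that $m_i*\overline{m_i}$ is null-homotopic, and rearrange to conclude $[\overline{\psi(\beta_i)}*\psi(\beta_i*m_i)]=[\overline{\psi(\beta'_i)}*\psi(\beta'_i*m_i)]$. The paper organizes this as a single computation showing the four-fold concatenation is trivial, while you equate the two $\phi$-values and cancel, but the content is identical; your explicit attention to endpoints and to uniqueness of the $(k,l)$-reduced representative is a harmless (and welcome) elaboration of what the paper leaves implicit.
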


\begin{proof}
Let $\beta$, $\gamma$ be paths from $x_0$ to the initial point of
$m_i$.  Then by Lemma \ref{eda3.3},
\begin{align*}
\Bigl(\overline{\psi(\gamma)}* \psi(\gamma*m_i) * \overline{\psi(\beta*m_i)} * \psi(\beta)\Bigr)& =_{0,1}
\overline{\psi(\gamma)}*\Bigl( \psi(\gamma*m_i) * \overline{\psi(\beta*m_i)} \Bigr)* \psi(\beta)\\
& =_{0,1} \overline{\psi(\gamma)} * \phi([\gamma*m_i * \overline{m_i}*\overline{\beta}])_{k,l}  *\psi(\beta) \\
&=_{0,1} \overline{\psi(\gamma)} * \phi([\gamma*\overline{\beta}])_{k,l} \cdot \psi(\beta) \\
&=_{0,1} \overline{\psi(\gamma)}*\psi(\gamma)*\psi(\overline{\beta}) * \psi(\beta) =_{0,1} x_0
\end{align*}

where $x_0$ is the constant path.

\end{proof}

We can now extended $\psi$ to reduced paths from $x_0$ to arbitrary points of $X$.  Let $\alpha:(I,0,1)\to (X, x_0, x)$ be a reduced path.  If $x\in B(X)$, then $\psi(\alpha)$ is already defined.  Otherwise, $x = m_i(t_0)$ for some $i$ and $t_0$.  Then $\alpha$, up to re-parametrization, is $\alpha'*m_i|_{[0,t_0]}$ or $\alpha'*\overline{m_i}|_{[0,1-t_0]}$ where $\alpha'$ is uniquely determined and is a path between points in $B(X)$.  Then we will define $\psi(\alpha)$ to be $\psi(\alpha')*\mf m_i|_{[0,t_0]}$ if $\alpha =\alpha'*m_i|_{[0,t_0]}$ or $\psi(\alpha')*\overline{\mf m_i}|_{[0,1-t_0]}$ if $\alpha = \alpha'*\overline{m_i}|_{[0,1-t_0]}$.

We are now ready to define our function between $X$ and $Y$ which will induce $\phi$ up to conjugation.  Let $f:X \to Y$ by $f(x)= \psi(\alpha)(1)$ for a path $\alpha$ from $x_0$ to $x$.  If $x\in B(X)$, this is independent of $\alpha$ by Proposition \ref{corr}.  If $x\in X\backslash B(X)$, this is independent of $\alpha$ by Lemma \ref{eda3.4}.  By construction, $f|_{m_i}$ is continuous.

\begin{lem}\label{composition}
Let $x\in B(X)$ and $\alpha: (I,0,1) \to (X,x_0,x)$. Then $\pi_k\circ \psi(\alpha) =_{0,1} T* f_k\circ \alpha$ and $\pi_k\circ f(x) = f_k(x)$.
\end{lem}

\begin{proof}
By construction $\psi(\alpha)$ is the unique path satisfying (\ref{univ_property}) for the homomorphism $\phi$ .  By construction $f(x)= \psi(\alpha)(1)$.

Fix $g: (\textbf E, 0) \to \bigl(X,\alpha(1)\bigr)$.

By Proposition \ref{corr}, there exists $h:(\textbf E, 0) \to \bigl(Y,\psi(\alpha)(1)\bigr)$ such that $\phi\circ\hat{\overline{\alpha}}\circ g_* =\hat{\overline{\psi(\alpha)}}\circ h_*$.  Hence

\begin{align*}
\pi_{k*}\circ\phi\circ\hat{\overline{\alpha}}\circ g_*
& = \pi_{k*}\circ\hat{\overline{\psi(\alpha)}}\circ h_{*}\\
& = \hat{\overline{\pi_{k}\circ \psi(\alpha)}}\circ \pi_{k*}\circ h_{*}\\
& = \hat{\overline{\pi_{k}\circ \psi(\alpha)}}\circ \bigl(\pi_{k}\circ h\bigr)_*
\end{align*}

and, as well,

\begin{align*}
\pi_{k*}\circ\phi\circ\hat{\overline{\alpha}}\circ g_*
& = \hat{\overline{T}}\circ f_{k*}\circ \hat{\overline{\alpha}}\circ g_*\\
& = \hat{\overline{T}}\circ \hat{\overline{f_k\circ \alpha}}\circ \bigl(f_k\circ g\bigr)_* \\
& =  \hat{\overline{T*f_k\circ \alpha}} \circ \bigl(f_k\circ g\bigr)_*.
\end{align*}

The uniqueness conditions from Proposition \ref{corr} imply that $f_k(x) = f_k\circ \alpha(1) = \pi_{k}\circ \psi(\alpha)(1)= \pi_k\circ f(x)$ and $\pi_k\circ \psi(\alpha) =_{0,1} T* f_k\circ \alpha$.
\end{proof}

\begin{lem}\label{almost composition}
$\pi_k\circ f\circ m_i = _{0,1} f_k\circ m_i$
\end{lem}

\begin{proof}
By Lemma \ref{composition},
\begin{align*}
\pi_k\circ f \circ m_i = \pi_k \circ\mf m_i & = _{0,1} \overline{\pi_k \circ \psi(\beta_i)}* \pi_k \circ \psi(\beta_i* m_i) \\
& =_{0,1} \bigl(\overline{T*f_k\circ \beta_i}\bigr) * \bigl(T* (f_k\circ \beta_i)* (f_k \circ m_i)\bigr)\\
& =_{0,1} f_k\circ m_i
\end{align*}
\end{proof}

Note that both Lemma \ref{composition} and Lemma \ref{almost composition} hold when $k$ is replaced by $l$.

\begin{lem}
$f$ is continuous.

\end{lem}

\begin{proof}
By Lemma \ref{continuous maps} and \ref{composition}, we know that $\pi_k\circ f$ and $\pi_l\circ f$ are continuous of $B(X)$.
Since $m_i(I)$ from a null sequence in $X$, $f_k\bigl(m_i(I)\bigr)$ and $f_l\bigl(m_i(I)\bigr)$ both form null sequence of sets in $Y_k,Y_l$ respectively.  Since $\mf m_i$ is $(k,l)$-reduced and $f_k\circ m_i$ is reduced, $\pi_k\bigl(f\bigl(m_i(I)\bigl)\bigl)) = f_k\bigl(m_i(I)\bigr)$ and $\pi_l\bigl(f\bigl(m_i(I)\bigl)\bigl) = f_l\bigl(m_i(I)\bigr)$. Then Lemma \ref{continuous maps-corrollary} implies that $f\bigl(m_i(I)\bigr)$ forms a null sequence of sets in $Y$.  Thus $f$ is continuous on $\bigcup_i m_i(I)$ which completes the proof.
\end{proof}

\begin{lem}
$\pi_{k*}\circ f_* = f_{k*}$
\end{lem}

\begin{proof}
Suppose that $\alpha: (I,0,1) \to (X, x_0, x_0)$.   Choose open subinterval $I_j$ of $I$ with the canonical ordering such that $\cup I_j = I\backslash \alpha^{-1}\bigl(B(X)\bigr)$.  Then $\pi_{k}\circ f\circ \alpha|_{\overline I_j}$ is a reparametrization of  $f_{k}\circ \alpha|_{\overline I_j}$ and $\pi_{k}\circ f = f_{k}$ on $B(X)$ which completes the proof.
\end{proof}

All that remains to show is that $\phi = \hat{\overline {\alpha}}\circ f_*$ for some path $\alpha$ from $y_0$ to $f(x_0)$.  As in Proposition \ref{pmaximalheadtail}, we can find a path $\alpha$ such that $\pi_k(\alpha)= _{0,1} T_k$.  Then

\begin{align*}
\pi_{k*}\circ \phi &= \hat{\overline{T_k}}\circ f_{k*}\\
& = \hat{\overline{T_k}}\circ \pi_{k*}\circ f_* \\
& = \pi_{k*} \circ \hat {\overline{\alpha}} \circ f_*.
\end{align*}

Since $\pi_{k*}$ is injective $\phi = \hat {\overline{\alpha}} \circ f_*$ and the uniqueness of $\alpha$ follows from the uniqueness of $T_k$ which completes the proof of Theorem \ref{contplanar}.

\section{Recovering the bad set}\label{sec3}

Recall that Eda and Conner in \cite{ce} showed that the fundamental group of a one-dimensional continuum which is not semilocally simply connected at any point can be used to reconstruct the original space.  The construction that was used there is similar to the one we will present here for the planar case.

\begin{defn}
We will denote the set of points at which $X$ is not semilocally simply connected by $B(X)$, the \emph{bad set of $X$}.
\end{defn}

\begin{lem} \label{fixed}Let be $X$ a planar Peano continuum.  Then $B(X)$ is fixed by every self-homotopy of $X$.\end{lem}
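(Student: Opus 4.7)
The plan is to establish the following stronger pointwise version: any self-map $f\colon X\to X$ homotopic to $\mathrm{id}_X$ should restrict to the identity on $B(X)$. From this, any self-homotopy $H\colon X\times I\to X$ with $H_0=\mathrm{id}_X$ will satisfy $H(x,t)=x$ for every $x\in B(X)$ and $t\in I$, because each time-slice $H_t$ is homotopic to $\mathrm{id}_X$. I will argue the pointwise claim by contradiction.

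Assume $f\simeq\mathrm{id}_X$ via a homotopy $H$, but $f(x_0)=y_0\neq x_0$ for some $x_0\in B(X)$. I will choose disjoint open balls $U\ni x_0$ and $V\ni y_0$ small enough that $\overline U\cap\overline V=\emptyset$, that $f(U)\subseteq V$ (possible by continuity of $f$), and that the pair $(U,V)$ meets the hypothesis of Lemma \ref{homotoped off}. Because $x_0\in B(X)$, the image of $\pi_1(U,x_0)$ in $\pi_1(X,x_0)$ is nontrivial, so there exists a loop $\gamma\colon I\to U$ based at $x_0$ with $[\gamma]\neq 1$.

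The composition $H\circ(\gamma\times\mathrm{id}_I)\colon I\times I\to X$ is a free homotopy in $X$ from $\gamma$ to the loop $f\circ\gamma\subseteq V$, with common side track $T(t):=H(x_0,t)$ from $x_0$ to $y_0$. In particular, $\gamma$, an essential loop contained in $U$, is freely homotopic in $X$ to a loop contained in $V$, whose closure is disjoint from $\overline U$. This is precisely the configuration prohibited by Lemma \ref{homotoped off}, which forbids an essential loop in a planar Peano continuum from being freely homotoped into a region sufficiently separated from its basepoint by disjoint balls; the resulting contradiction finishes the proof. The main obstacle will be arranging $U$ and $V$ to satisfy the hypothesis of Lemma \ref{homotoped off} in full detail; this is the step where planarity of $X$ is essential and mirrors the role played by the same lemma at the end of the proof of Proposition \ref{corr}.
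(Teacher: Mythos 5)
Your argument is correct and is essentially the paper's own proof: both pick a small disk neighborhood of $x\in B(X)$ with boundary not contained in $X$ (Lemma \ref{alem2} supplies this), small enough that its image under $f$ is disjoint from it, take an essential loop in that neighborhood (which exists since $X$ is not semilocally simply connected at $x$), and observe that the homotopy from $\mathrm{id}_X$ to $f$ freely homotopes that loop out of the disk, contradicting Lemma \ref{homotoped off}.
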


\begin{proof}

Let $f:X\to X$ be homotopic to the identity on $X$.  Then any loop
is homotopic to its image under $f$.  Suppose that $x\in B(X)$ such
that $f(x) \neq x$. Since $X$ is not semilocally simple connected
at $x$, there exists a closed disk neighborhood $D$ of $x$ in the
plane with the property that $\partial D$ is not contained in $X$
and $D\cap f(D) = \emptyset$. Then any loop contained in $D$ can be
homotoped into $f(D)$.  However; this is a contradiction of
Lemma \ref{homotoped off}, since there exist essential loops
contained in the interior of $D\cap X$.

\end{proof}

\begin{defn}We will say that $H$ is a \emph{pseudo-Hawaiian earring
subgroup of $G$}, if $H$ is a subgroup of $G$ and the uncountable homomorphic image of $\mb H$.\end{defn}

It will sometimes be convenient to talk about a generating set, in the sense of infinite products, for a (pseudo-)Hawaiian earring subgroup of $\pi_1(X,x_0)$.  For each (pseudo-)Hawaiian earring subgroup $H\in\pi_1(X,x_0)$, we may choose an (homomorphism) isomorphism $\phi: \mathbb H
\to H$.  We will say that $\{\phi(a_i)\}$ \emph{generates (in the
sense of infinite products)} $H$ and will write $H =
\infgen{\phi(a_n)}$.

In this section, we will show that there exists a natural equivalence
relation on pseudo-Hawaiian earring subgroups.  Natural in the sense that
it is determined by the topology and determines the topology of $B(X)$.

A natural question is what are the possible isomorphism types of
pseudo-Hawaiian earring subgroups.  Conner has conjectured that every
uncountable homomorphic image of an Hawaiian earring group which embeds in
the fundamental group of a planar or one-dimensional Peano continuum is a
Hawaiian earring group.

\subsection{Equivalence classes of pseudo-Hawaiian earring subgroups}

We are now ready to prove:

\textbf{Theorem \ref{bad}} \emph{The homeomorphism type of $B(X)$ is completely determined by $\pi_1(X,x_0)$, where $B(X)$ is the subspace of points at which $X$ is not semilocally simply connected.}

\begin{defn}
We will say that $x$ is a \emph{wedge point} for $H\subset\pi_1(X,x_0)$ a pseudo-Hawaiian earring subgroup if, there exists a surjective homomorphism $\psi: \mb H \to H$ with the property that $\psi = \hat\alpha\circ f$ for a continuous function $f:\mathbb{E} \to X$ such that $f(0) = x$.
\end{defn}

The following two lemmas are immediate from the definitions.

\begin{lem}\label{sur}
If $X$ is a planar Peano continuum and $x\in B(X)$, then there
exists $H$ a pseudo-Hawaiian earring subgroup of $\pi_1(X,x_0)$ with
$x$ as its wedge point.
\end{lem}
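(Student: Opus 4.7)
The plan is to construct the subgroup $H$ explicitly as the conjugated image of a single continuous map $f:(\textbf{E},0)\to(X,x)$ built from essential loops shrinking to $x$. I proceed in three steps.

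First, I use the hypothesis $x\in B(X)$ to extract the loops. Since $X$ is a Peano continuum, it is locally path connected and metric, so I can fix a nested path-connected neighborhood base $U_1\supset U_2\supset\cdots$ at $x$ with $\diam(U_n)\to 0$. Fixing any path $\alpha:(I,0,1)\to(X,x_0,x)$, the definition of $B(X)$ (applied with $\phi=\mathrm{id}$) says precisely that $\widehat{\overline\alpha}\circ j_*(\pi_1(U_n,x))$ is nontrivial in $\pi_1(X,x_0)$ for every $n$. So for each $n$ I can select a loop $b_n:I\to U_n$ based at $x$ whose class $[\overline{\alpha}*b_n*\alpha]$ is nontrivial in $\pi_1(X,x_0)$.

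Second, I assemble the $b_n$ into a map. Because $\diam(U_n)\to 0$, the sequence $\{b_n\}$ is a null sequence of loops at $x$, so by the convention recalled at the end of Section 2 there is a continuous $f:(\textbf{E},0)\to(X,x)$ taking each $\mathbf{a}_n$ continuously to $b_n$. Setting $\psi:=\widehat\alpha\circ f_*:\mathbb H\to\pi_1(X,x_0)$ and $H:=\psi(\mathbb H)$, the map $\psi:\mathbb H\to H$ is a surjective homomorphism of the required form, and $x=f(0)$ is the wedge point of $H$ by construction.

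Third, I must verify that $H$ is uncountable, which is the only nontrivial content of the lemma. By choice, $\psi(a_n)=[\overline\alpha*b_n*\alpha]\neq 1$ for every $n$, so $\psi$ is nontrivial on every tail subgroup $\mathbb H_n$. If necessary I refine the choice of $b_n$ inductively, shrinking each $b_n$ inside $U_n$ so that its class remains essential after multiplication by any preassigned finite word in the previously chosen $[b_1],\dots,[b_{n-1}]$; this kind of diagonal choice is exactly the style used in Lemma \ref{key}. With such a refinement, distinct infinite binary words $\prod_n a_n^{\epsilon_n}\in\mathbb H$ yield distinct images in $H$, giving $|H|\geq 2^{\aleph_0}$.

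The main obstacle is this third step: the tempting ``$\mathbb H$ is uncountable, hence so is $H$'' is wrong in general, since $\mathbb H$ admits countable quotients (for instance, free groups of countable rank by projection to finitely many circles). One genuinely has to use that each $b_n$ has been selected essential inside a shrinking neighborhood of $x$, together with the kind of weight/diameter control established in Section \ref{sec1}, in order to separate uncountably many images. Once that separation is in hand, both properties in the definition of a pseudo-Hawaiian earring subgroup with wedge point $x$ are satisfied by $H$ and $\psi$.
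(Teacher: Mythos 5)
The paper itself offers no argument here (it declares Lemma \ref{sur} ``immediate from the definitions''), and your first two steps are exactly the construction the authors use implicitly at the start of the proof of Proposition \ref{corr}: pick a null sequence of loops $b_n$ based at $x$ that are essential in $X$ and assemble them into a continuous $f:(\textbf{E},0)\to (X,x)$. You are also right that the only substantive content is the uncountability of $H=\widehat{\overline{\alpha}}\circ f_*(\mathbb H)$, which the authors take for granted but which does require justification.

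However, your argument for uncountability has a genuine gap: the diagonal condition you impose is aimed at the wrong relation. If two binary words $\prod_n a_n^{\epsilon_n}$ and $\prod_n a_n^{\delta_n}$ first differ at index $m$ (say $\epsilon_m=1$, $\delta_m=0$) and their images under $f_*$ coincide, then cancelling the common finite prefix yields
\[
[b_m]=\Bigl[\prod_{n>m}b_n^{\delta_n}\Bigr]\cdot\Bigl[\prod_{n>m}b_n^{\epsilon_n}\Bigr]^{-1},
\]
so $[b_m]$ is represented by a loop whose image lies in $U_{m+1}$. Your refinement --- that $[b_n]$ avoid every finite word in the \emph{earlier} classes $[b_1],\dots,[b_{n-1}]$ --- is a condition on heads and says nothing about this tail relation, so the claimed separation of the $2^{\aleph_0}$ binary words does not follow. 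What is needed instead is a condition tying $b_m$ to the \emph{later} neighborhoods: choose $U_{m+1}$ so small that $[b_m]$ has no representative supported in $U_{m+1}$. This can be arranged with the paper's tools: $\pi_k\circ b_m$ is essential in the one-dimensional space $X_k$, so Lemma \ref{lem6} provides sets $O'_m,O''_m$ with disjoint closures and $w^{O'_m}_{O''_m}([\pi_k\circ b_m])\geq 1$; at most one of $\overline{O'_m},\overline{O''_m}$ contains $\pi_k(x)$, so one may take $U_{m+1}$ inside the saturated open set $\pi_k^{-1}(X_k\setminus \overline{O'_m})$, say, and then every loop $c$ in $U_{m+1}$ satisfies $w^{O'_m}_{O''_m}([\pi_k\circ c])=0$, whence $[c]\neq[b_m]$ by injectivity of $\pi_{k*}$. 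With that tail-type condition replacing your head-type one, the first-difference argument closes and $H$ is uncountable; alternatively, one can simply cite the known fact that failure of semilocal simple connectivity at $x$ yields a continuous $f:(\textbf{E},0)\to(X,x)$ with uncountable image, which is what the authors rely on in Proposition \ref{corr} and Lemma \ref{psi2phi}.
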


\begin{lem}\label{conjugatewedgepoint}
If $x$ is a wedge point for $H$ a pseudo-Hawaiian earring subgroup, then $x$ is a wedge point for $\widehat\alpha (H)$.

\end{lem}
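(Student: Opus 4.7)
The plan is to unwind the definitions: if $x$ is a wedge point for $H$, then by definition there exists a surjective homomorphism $\psi \colon \mathbb H \to H$ of the form $\psi = \widehat{\gamma} \circ f_*$ for some continuous $f \colon \mathbf E \to X$ with $f(0)=x$ and some path $\gamma$ from $x$ to $x_0$ (these are the endpoints forced on $\gamma$ so that $\widehat{\gamma}\circ f_*$ lands in $\pi_1(X,x_0)$). The goal is to produce an analogous factorization witnessing that $x$ is a wedge point for $\widehat{\alpha}(H)$.

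The natural candidate is $\psi' := \widehat{\alpha} \circ \psi \colon \mathbb H \to \widehat{\alpha}(H)$. Since $\widehat{\alpha}$ is an isomorphism of fundamental groups (change of basepoint), surjectivity of $\psi$ transfers immediately to surjectivity of $\psi'$ onto $\widehat{\alpha}(H)$, so $\widehat{\alpha}(H)$ remains an uncountable homomorphic image of $\mathbb H$, hence a pseudo-Hawaiian earring subgroup. The one substantive identity to verify is that $\widehat{\alpha}\circ \widehat{\gamma} = \widehat{\gamma \ast \alpha}$: indeed, for any loop $g$ at $f(0)=x$,
\[
\widehat{\alpha}\bigl(\widehat{\gamma}([g])\bigr) = [\overline{\alpha}\ast\overline{\gamma}\ast g\ast \gamma\ast \alpha] = [\overline{\gamma\ast\alpha}\ast g\ast (\gamma\ast\alpha)] = \widehat{\gamma\ast\alpha}([g]),
\]
using $\overline{\gamma\ast\alpha}=\overline{\alpha}\ast\overline{\gamma}$. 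The concatenation $\gamma \ast \alpha$ is legal because $\gamma$ ends at $x_0$ and $\alpha$ starts at $x_0$, and the resulting path runs from $x$ to $\alpha(1)$, which is the correct endpoint for a change-of-basepoint map landing in the ambient group containing $\widehat{\alpha}(H)$.

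Putting these together, $\psi' = \widehat{\gamma \ast \alpha} \circ f_*$, where the same $f$ satisfies $f(0)=x$, so $\psi'$ exhibits $x$ as a wedge point for $\widehat{\alpha}(H)$, completing the proof. There is no genuine obstacle here; the lemma is a formal consequence of the definition of wedge point together with the functorial behavior of the change-of-basepoint isomorphism under path concatenation, and the argument is essentially the same regardless of whether one reads $\widehat{\alpha}(H)$ as a conjugate subgroup of $\pi_1(X,x_0)$ (when $\alpha$ is a loop at $x_0$) or as the image in $\pi_1(X,\alpha(1))$ for a general path $\alpha$ starting at $x_0$.
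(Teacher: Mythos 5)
Your proof is correct and is exactly the definition-unwinding the paper has in mind: the paper offers no written proof, stating only that this lemma is ``immediate from the definitions,'' and your computation $\widehat{\alpha}\circ\widehat{\gamma}=\widehat{\gamma\ast\alpha}$ applied to the witnessing factorization $\psi=\widehat{\gamma}\circ f_*$ is the intended verification. No issues.
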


We should show the existence and uniqueness of wedge points.

\begin{lem}
There exists a unique wedge point for each pseudo-Hawaiian earring
subgroup.
\end{lem}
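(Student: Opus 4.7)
Existence is immediate from Theorem \ref{contplanar}. Given a pseudo-Hawaiian earring subgroup $H\subset\pi_1(X,x_0)$, the definition supplies a surjective homomorphism $\psi\colon \mb H\to H$, whose image is the uncountable group $H$. Applying Theorem \ref{contplanar} to $\psi$ produces a continuous $f\colon (\textbf{E},0)\to (X,w)$ and a path $\alpha$ from $x_0$ to $w$ with $f_*=\widehat{\alpha}\circ\psi$. Inverting the change-of-basepoint isomorphism rewrites this as $\psi=\widehat{\overline\alpha}\circ f_*$, exhibiting $w=f(0)$ as a wedge point for $H$.

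For uniqueness, suppose $x$ and $y$ are both wedge points for $H$, witnessed by surjective homomorphisms $\psi_x,\psi_y\colon \mb H\to H$ and continuous realizations $f_x,f_y\colon \textbf{E}\to X$ with $f_x(0)=x$, $f_y(0)=y$, together with paths $\alpha_x,\alpha_y$ from $x_0$ to $x$ and $y$ respectively. My plan is to assume $x\neq y$ and reach a contradiction with Lemma \ref{homotoped off}, in the same spirit as the closing paragraph of the proof of Proposition \ref{corr}. Choose disjoint open disks $U_x\ni x$ and $U_y\ni y$ small enough that $U_x\cap X$ and $U_y\cap X$ satisfy the hypothesis of Lemma \ref{homotoped off}. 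By continuity there is an $N_0$ such that $f_x(\mathbf{a}_n)\subset U_x$ and $f_y(\mathbf{a}_n)\subset U_y$ for all $n\geq N_0$. Using surjectivity of $\psi_y$, choose for each $n$ an element $b_n\in\mb H$ with $\psi_y(b_n)=\psi_x(a_n)$. Unwinding the equality of these elements of $\pi_1(X,x_0)$ through the conjugations $\widehat{\overline{\alpha_x}}$ and $\widehat{\overline{\alpha_y}}$ asserts precisely that the loop $f_x(\mathbf{a}_n)$ based at $x$ is freely homotopic in $X$, via the connecting path $\alpha_x*\overline{\alpha_y}$, to the loop $f_y(b_n)$ based at $y$.

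The main obstacle, and heart of the argument, is arranging that $f_y(b_n)\subset U_y$ for all sufficiently large $n$; a priori the element $b_n$ supplied by pure surjectivity could be an arbitrarily complicated word in the generators. To overcome this I plan to exploit the continuous realization $f_y$: elements of the tail subgroup $\psi_y(\mb H_M)$ are represented through $f_y$ by loops in $f_y(\textbf{E}_M)$, and $f_y(\textbf{E}_M)$ collapses onto $y$ as $M\to\infty$. Simultaneously $\psi_x(a_n)$ is represented through $f_x$ by $\overline{\alpha_x}*f_x(\mathbf{a}_n)*\alpha_x$ whose non-basepoint portion shrinks to $x$. Combining the uniqueness clause of Theorem \ref{contplanar} with the null-sequence structure of $\mb H$ should force $\psi_x(a_n)\in\psi_y(\mb H_{M(n)})$ for some $M(n)\to\infty$, providing preimages $b_n\in\mb H_{M(n)}$ and hence $f_y(b_n)\subset U_y$ for large $n$. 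Once both $f_x(\mathbf{a}_n)\subset U_x$ and $f_y(b_n)\subset U_y$, we have two essential loops in the disjoint planar disks $U_x$ and $U_y$ that are nevertheless freely homotopic in $X$, contradicting Lemma \ref{homotoped off} and forcing $x=y$.
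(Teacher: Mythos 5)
Your existence argument and your overall strategy for uniqueness (disjoint disks around the two putative wedge points plus Lemma \ref{homotoped off}) match the paper's. But there is a genuine gap exactly where you flag ``the main obstacle'': you never actually produce a single essential loop that lives in both $U_x\cap X$ and $U_y\cap X$ up to free homotopy. Your plan is to pull back the specific generators $\psi_x(a_n)$ through $\psi_y$ and then argue that the preimages $b_n$ can be taken in $\mb H_{M(n)}$ with $M(n)\to\infty$. Nothing forces this: surjectivity of $\psi_y$ only gives \emph{some} word $b_n$, and a priori $\psi_x(a_n)$ could equal $\psi_y(w)$ for a word $w$ involving only the first few generators, in which case $f_y(b_n)$ sits in $f_y(\textbf{E})$ rather than in $U_y$ and no contradiction with Lemma \ref{homotoped off} arises. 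The uniqueness clause of Theorem \ref{contplanar} is a statement about the uniqueness of the conjugating path $\alpha$ up to homotopy rel endpoints; it says nothing about membership of particular elements in the tail subgroups $\psi_y(\mb H_M)$, so invoking it here does not close the gap. In short, the step ``$\psi_x(a_n)\in\psi_y(\mb H_{M(n)})$ with $M(n)\to\infty$'' is asserted, not proved, and is likely false in general.

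The paper avoids this by not chasing the individual generators at all. It fixes $N$ so that $f_*(\mb H_N)$ has representatives in $U\cap X$ and $f'_*(\mb H_N)$ has representatives in $U'\cap X$, and then uses a countability (cardinality) argument to show that the two \emph{tail-subgroup images} $\psi(\mb H_n)$ and $\psi'(\mb H_n)$ must intersect nontrivially for every $n$: each is obtained from the uncountable group $H$ by discarding only ``countably much,'' since $\mb H$ is generated by $\mb H_n$ together with the finitely many loops $a_1,\dots,a_{n-1}$. Any nontrivial element of $\psi(\mb H_n)\cap\psi'(\mb H_n)$ for $n>N$ is then an essential loop freely homotopic into each of the two disjoint disks, which contradicts Lemma \ref{homotoped off}. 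To repair your proof you should replace the pullback-of-generators step with an argument of this kind; as written, the proof is incomplete.
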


\begin{proof}
The existence follows Theorem \ref{contplanar}.

Suppose the a pseudo-Hawaiian earring subgroup $H$ has two distinct
wedge points $x$ and $x'$ which correspond to two homomorphism $\psi$ and $\psi'$ with image $H$.  Then there exists $f:(\textbf{E},0)
\to (X,x)$, $f:(\textbf{E},0) \to (X,x')$, $\alpha:(I,1)\to
(X,x)$, and $\alpha':(I,1)\to (X,x')$ such that
$\widehat\alpha\circ f_* = \psi$ and $\psi' = \widehat\alpha'\circ f_*' $.

Let $U$ and $U'$ be disjoint disks in the plane whose boundaries are not
contained in $X$ such that $U\cap X$ and $U'\cap X $ are neighborhoods of
$x$ and $x'$ respectively. Since $f$ and$f'$ are continuous, the image of
$f(\mathrm{a}_i)$ is eventually contained in $U\cap X$ and the image of
$f'(\mathrm{a}_i)$ is eventually contained in $U'\cap X$.  Hence there
exists $N$ such that $f_*(\mb H_N)$ is contained in $U\cap X$ and
$f'_*(\mb H_N)$ is contained in $U'\cap X$

A countability argument shows that $\psi(\mb H_n)\cap\psi'(\mb H_n)$ is
nontrivial for all $n$. However, by the previous paragraph any element in
the intersection for $n>N$ can then be freely homotoped from $U$ to $U'$ which contradicts Lemma \ref{homotoped off}.
\end{proof}

This lemma also implies that the definition of a wedge point is independent of our choice of homomorphism.

We will let $\hes$ be the set of pseudo-Hawaiian earring subgroups of
$\pi_1(X,x_0)$.  Then we can now use the previous lemmas to define
$\Upsilon:\hes\to B(X)$, by taking a pseudo-Hawaiian earring subgroup to
its wedge point. This function is surjective by Lemma \ref{sur}. However, Lemma
\ref{conjugatewedgepoint} implies that it is highly non-injective.  We
will now define an equivalence relation on $\hes$ to make $\Upsilon$
injective on equivalency classes.

\begin{defn}Let $H_1, H_2$ be pseudo-Hawaiian earring subgroups of
$\pi_1(X,x_0)$. Then we will say that $H_1$ is \emph{similar} to $H_2$ if
there exists a $g\in\pi_1(X,x_0)$ and an $H\in \pi_1(X,x)$ a pseudo-Hawaiian earring subgroup such
that $H_1,gH_2g^{-1}\subset H$.  We will write $H_1\sim H_2$.
\end{defn}
If $X$ is one-dimensional these equivalence classes are the same as
maximal compatible nonempty subfamilies of the set of subgroups of
$\pi_1(X,x_0)$ in \cite{ce}.

The following is well know.

\begin{thm}\label{art}
For $\psi:\mathbb H \to \mathbb F$ a homomorphism where $\mathbb
F$ is a free group,  there exists an $i$ such that $\psi$
factors through $P_{i*}$.
\end{thm}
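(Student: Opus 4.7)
The plan is to prove Theorem~\ref{art} by contradiction, along the lines of Higman's classical argument on homomorphisms out of unrestricted free products.

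First I would fix the interpretation of $P_i$: it is the retraction of $\textbf{E}$ onto the finite bouquet $\mathbf{a}_1\cup\cdots\cup\mathbf{a}_{i-1}$ which collapses $\textbf{E}_i$ to the basepoint $0$. Then $P_{i*}:\mathbb{H}\to\mathbb{H}$ kills precisely the normal closure of $\mathbb{H}_i$, with image isomorphic to the free group on $\{a_1,\ldots,a_{i-1}\}$. Consequently, ``$\psi$ factors through $P_{i*}$'' is equivalent to the assertion $\psi|_{\mathbb{H}_i}\equiv 1$.

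I would argue by contradiction, assuming $\psi(\mathbb{H}_i)\neq\{1\}$ for every $i$. Using this assumption, one inductively selects elements $g_i\in\mathbb{H}_i$ with $\psi(g_i)\neq 1$. Because each $g_i$ is supported on $\textbf{E}_i$ and the subspaces $\textbf{E}_i$ form a null sequence converging to $0$, the infinite concatenation $w=g_1\cdot g_2\cdot g_3\cdots$ defines a bona fide element of $\mathbb{H}$. Moreover, for every $n$ one has a telescoping decomposition $w=(g_1\cdots g_{n-1})\cdot t_n$ with $t_n=g_n g_{n+1}\cdots\in\mathbb{H}_n$, yielding
\[
\psi(t_n)\;=\;\psi(g_1\cdots g_{n-1})^{-1}\,\psi(w).
\]

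The main obstacle, which is the heart of Higman's argument, is to engineer the $g_i$ so that the algebraic constraints this imposes on $\psi(w)$ are incompatible with $\psi(w)\in\mathbb{F}$. The standard resolution leverages two facts about $\mathbb{F}$: it is torsion-free (so arbitrary nonzero powers of a nontrivial element remain nontrivial), and it contains no nontrivial infinitely divisible element. Concretely, one replaces $g_i$ by high powers $h_i^{k_i}$ with $k_i\to\infty$ and $\psi(h_i)\neq 1$, and, by conjugating and by controlling disjointness of the supports of the $\psi(h_i)$ in a fixed basis of $\mathbb{F}$, arranges that $\psi(w)$ is forced to be a $k_n$-th power in $\mathbb{F}$ for an unbounded sequence $k_n$. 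This forces $\psi(w)=1$, and unwinding the telescoping decomposition then forces $\psi(g_i)=1$ for sufficiently large $i$, contradicting the choice of the $g_i$ and completing the proof. The delicate combinatorial selection of the $g_i$ so that these divisibility constraints propagate cleanly through the telescoping sum is the technical crux, and is exactly the core of Higman's original argument.
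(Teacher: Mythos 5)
The paper does not actually prove Theorem~\ref{art}: it is quoted as well known, with the proof deferred to Higman \cite{hig}, Morgan--Morrison \cite{mm}, Cannon--Conner (Theorem 4.4 of \cite{cc3}) and Summers \cite{cs}, so there is no in-text argument to compare against. Your reduction is correct: since every legal word uses $a_1,\dots,a_{i-1}$ only finitely often, it splits into finitely many alternating blocks and $\mathbb{H}\cong F_{i-1}*\mathbb{H}_i$; hence $\ker P_{i*}$ is the normal closure of $\mathbb{H}_i$ and ``$\psi$ factors through $P_{i*}$'' is indeed equivalent to $\psi(\mathbb{H}_i)=\{1\}$. The contradiction framework (choose $g_i\in\mathbb{H}_i$ with $\psi(g_i)\neq 1$ and assemble them into a single legal infinite word) is also the right one.

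The gap is in the mechanism you give for the crux, which as described would fail. With the flat product $w=g_1g_2g_3\cdots$, $g_i=h_i^{k_i}$, the telescoping identity only yields $\psi(w)=\psi(h_1)^{k_1}\cdots\psi(h_{n-1})^{k_{n-1}}\psi(t_n)$ with $\psi(t_n)$ a completely uncontrolled element of $\mathbb{F}$; nothing forces $\psi(w)$ to be a $k_n$-th power, and you cannot ``control the supports of the $\psi(h_i)$ in a fixed basis'' because $\psi$ is given, not constructed. The essential point you are missing is that the exponents must be chosen \emph{before} the tails are known, and in a flat product the unknown tail sits outside every power, so its length is never amplified. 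Higman's device is the \emph{nested} word $b_n=c_nb_{n+1}^{k_n}$ (legal, since $c_m$ occurs only $\prod_{j<m}k_j$ times in $b_1$), with $c_n\in\mathbb{H}_n$, $\psi(c_n)\neq 1$, and $k_n>\ell(\psi(c_n))+1$, where $\ell$ is word length in a fixed basis of $\mathbb{F}$. Writing a nontrivial $x\in\mathbb{F}$ as $uyu^{-1}$ with $y$ cyclically reduced gives $\ell(x^{k})\geq\ell(x)+k-1$, so if $\psi(b_{n+1})\neq 1$ then $\ell(\psi(b_n))\geq\ell(\psi(b_{n+1}))+k_n-1-\ell(\psi(c_n))>\ell(\psi(b_{n+1}))$, and in particular $\psi(b_n)\neq 1$; while if $\psi(b_{n+1})=1$ then $\psi(b_{n+2})^{k_{n+1}}=\psi(c_{n+1})^{-1}\neq 1$, so $\psi(b_{n+2})\neq 1$. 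Thus $\psi(b_n)\neq 1$ for arbitrarily large $n$, and the strict descent of lengths forces $\ell(\psi(b_1))\geq n$ for every $n$, a contradiction. Replacing your flat product and divisibility claim with this nested construction and length-descent argument closes the proof.
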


If we consider homomorphisms from the natural inverse limit containing
$\mb H$ to free groups, then this is a theorem of Higman \cite{hig}.
Morgan and Morrison also gave a topological proof in \cite{mm}. When we
consider homomorphism from $\mb H$, this is a result of Theorem 4.4 in
\cite{cc3} and a proof can be found in \cite{cs}.

\begin{lem}\label{ifwedge}
If $H_1$ and $H_2$ have the same wedge point, then $H_1\sim H_2$.
\end{lem}

\begin{proof}
Let $x$ be a wedge point for $H_1$ and $H_2$.  Let $\psi_i: \mb H \to \pi_1(X,x_0)$ be a homomorphism with image $H_i$ for $i\in\{1,2\}$.  Then there exists
continuous functions $f_i:(\textbf{E}, 0)\to (X,x)$ and
$\alpha_i:(I,1)\to (X,x)$ such that $\widehat\alpha_i\circ
f_{i*} = \psi_{i}$.

Let $g: (\textbf E,0) \to (X,x)$ be sending $\textbf E_o$ to $f_1(\textbf E)$ and $\textbf{E}_e$ to $f_2(\textbf E)$, i.e. $\mathrm{a}_i$ goes to $f_1(\mathrm{a}_{\frac{i+1}2})$ for odd $i$ and $f_2(\mathrm{a}_\frac i2)$ for even $i$.

Hence  $\widehat\alpha_i(H_i)\subset f_*(\mb H)$  and $f_*(\mb
H)$ is a pseudo-Hawaiian earring subgroup. Then after conjugating $H_2$ by $\alpha_1*\overline{\alpha_2}$ and $f_*(\mb H)$ by $\alpha_1$, we have necessary inclusion to show that $H_1\sim H_2$.
\end{proof}

\begin{lem}\label{onlyifwedge}
If $H_1\sim H_2$ , then $H_1$ and $H_2$ have the same wedge point.
\end{lem}

\begin{proof}

It is sufficient to show that if $H_1$ and $H_2$ are pseudo-Hawaiian earring subgroups with the property that $H_1\subset H_2$, then $H_1$ and $H_2$ have the same wedge point.  There exists
continuous functions $f_i:(\textbf{E}, 0)\to (X,x_i)$ and paths $\alpha_i:(I,0,1)\to (X,x_0,x_i)$ such that $\widehat{\overline\alpha}_i\circ f_i(\mathbb H) = H_i$.

Suppose that $f_1(0) = x_1 \neq x_2 = f_2(0)$. By Lemma \ref{alem2}, we may find $r_i$ such that $B_{2r_1}^X(x_1) \cap B_{2r_2}^X(x_2) = \emptyset$.   Then we may choose $N$ such that $f_i(\textbf{E}_N) \subset B_{r_i}^X(x_i)$ for $i= 1,2$.

Let $X' = X \cup  B_{r_2}^{\mathbb R}(x_2)$.  Let $f_i':(\textbf{E}, 0)\to (X',x_i)$ denote the map obtained by extending the range of $f_i$ and $H_i' = \widehat{\overline\alpha}_i\circ f_i'(\mathbb H)$.   Then $H_2'$ is countable which implies that $H_1'$ is countable.  Hence, $ f_{2*}(a_n)$ is eventually trivial.

Fix $n> N$ such that $f_{2}'(\mathbf a_{n})$ is null-homotopic.  Choose a continuous map $h : \mathbb D \to X'$ such that $h|_{\partial \mathbb D} = f_2' (\mathbf a_{n})$.   Hence by Lemma \ref{cutoff}, $h$ can be modified to a map with image contained in $B_{2r_2}^{X'}(x_2)$.  Since $B_{2r_1}^X(x_1)$ and $ B_{2r_2}^X(x_2)$ are disjoint, $h$ maps into $X$ and $f_2(\mathbf a_{n})$ is null-homotopic.  This holds for any $n>N$; thus $f_{2*}(\mathbb H)$ is countable which contradicts our choice that $H_2$ was a pseudo-Hawaiian earring group.

Therefore $x_1 = x_2$.

\end{proof}

Clearly, $\sim$ is reflexive and symmetic. Then Lemmas
\ref{ifwedge} and \ref{onlyifwedge} show that $\sim$ is
transitive. Hence, $\sim$ is actually an equivalency relation.
We will use $[H]$ to denote the equivalency class of pseudo-Hawaiian
earring subgroups generated by the equivalency relation $\sim$.

We can now give an equivalent definition for the wedge point of a psuedo-Hawaiian earring subgroup.  Let $H = \infgen{g_n}$ be a psuedo-Hawaiian earring subgroup.  Then the wedge point of $H$ is the unique point, $x$, which enjoys the following property:
\emph{For every $\epsilon >0$, there exists an $N$ such that the set of homotopy classes  $\{g_n\}_{n>N}$ have representatives contained in $B_\epsilon(x)$}.

\begin{prop}\label{bijective}{$B(X)$ corresponds bijectively to equivalence classes of pseudo-Hawaiian earring subgroups of $\pi_1(X,x_0)$, i.e.
$\Upsilon^{-1} (x) = [H]$}

\end{prop}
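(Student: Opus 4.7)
The plan is to assemble the equality $\Upsilon^{-1}(x) = [H]$ directly from the three preceding lemmas, since all the substantive work has already been done: Lemma \ref{sur} supplies existence of a pseudo-Hawaiian earring subgroup with prescribed wedge point, Lemma \ref{ifwedge} tells us that two subgroups sharing a wedge point are $\sim$-equivalent, and Lemma \ref{onlyifwedge} tells us that $\sim$-equivalent subgroups share a wedge point. So the proposition reduces to a two-line bookkeeping argument, one for each inclusion.

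First I would fix $x \in B(X)$ and, using Lemma \ref{sur}, choose an $H \in \hes$ with $\Upsilon(H) = x$; this shows $\Upsilon$ is surjective and makes the equivalence class $[H]$ a well-defined object to compare against $\Upsilon^{-1}(x)$. For the inclusion $[H] \subseteq \Upsilon^{-1}(x)$, I would take $H' \sim H$ and apply Lemma \ref{onlyifwedge} to conclude that $x$ is also a wedge point of $H'$, hence $\Upsilon(H') = x$ and $H' \in \Upsilon^{-1}(x)$.

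For the reverse inclusion $\Upsilon^{-1}(x) \subseteq [H]$, I would take any $H' \in \Upsilon^{-1}(x)$, so that $x$ is a wedge point for both $H'$ and $H$, and invoke Lemma \ref{ifwedge} to get $H' \sim H$, i.e.\ $H' \in [H]$. Combining the two inclusions gives the claimed equality. Because $H$ was an arbitrary representative of the fibre over $x$, the same argument shows that $\Upsilon$ descends to a well-defined bijection $\hes/{\sim} \to B(X)$. There is no real obstacle here; the only subtle point to verify in writing is that the $H$ produced by Lemma \ref{sur} can indeed play the role of the fixed representative in the statement, which is automatic since $\sim$ has already been shown to be an equivalence relation just above the proposition.
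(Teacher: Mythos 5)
Your proposal is correct and is exactly the argument the paper intends: the proposition is left without an explicit proof precisely because it follows by assembling Lemma \ref{sur} (surjectivity), Lemma \ref{onlyifwedge} (similar subgroups share a wedge point, giving $[H]\subseteq\Upsilon^{-1}(x)$), and Lemma \ref{ifwedge} (subgroups with a common wedge point are similar, giving the reverse inclusion). Your bookkeeping matches the paper's implicit proof step for step.
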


Then it makes sense to talk about the \emph{wedge point of a
pseudo-Hawaiian earring subgroup equivalency class} being the wedge
point of any of its members.

\begin{defn}
Let $\{[H_n]\}$ be a sequence of equivalency classes of pseudo-Hawaiian earring subgroups of the fundamental group of a planar Peano continuum. We will say that $[H_n]$ \emph{converges} to $[H]$, a pseudo-Hawaiian earring subgroup equivalence class, if there exists a choice of representatives $\infgen{g_{n,i}}_i\in[H_n]$ and a sequence of natural numbers $\{M_n\}$ so that for any sequence
$\{k_n\}$, with $k_n> M_n$ the pseudo-Hawaiian earring subgroup
$\infgen{g_{n,k_n}}_n \in [H]$.  We will use all the standard terminology when referring to sequence of pseudo-Hawaiian earring subgroup equivalence classes.
\end{defn}

\emph{Actually since we have to be able to choose the representative for
the equivalence class, we may as well choose the representative such that $M_n = 1$.  This can be done by removing the  first finitely many big loops which will not change the equivalence class.}

Initially there is no reason to suppose that a convergent sequence has a unique limit point.  However, this will be a consequence of Proposition \ref{limitpoint} and Lemma \ref{ifwedge}.

\begin{prop}\label{limitpoint}
If $[H_n] \to [H]$, then $x_n \to x$ where $x_n$ is the wedge
point of $[H_n]$ and $x$ is the wedge point of $[H]$.
\end{prop}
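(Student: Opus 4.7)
The plan is to argue by contradiction: suppose $[H_n] \to [H]$ but the wedge points $x_n$ do not converge to $x$. By compactness of the planar Peano continuum $X$, after passing to a subsequence (which does not affect convergence of the sequence of equivalence classes), I may assume $x_n \to x^*$ for some $x^* \neq x$. First I would select two disjoint closed disks $D, D^* \subset \mathbb R^2$ with $x \in \int(D)$, $x^* \in \int(D^*)$, and $\partial D, \partial D^* \not\subset X$; such disks exist because the boundary circles can be chosen with generic radii to avoid the compact planar set $X$ outside a set of isolated points. For all sufficiently large $n$ we then have $x_n \in \int(D^*)$.

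Next I would exploit the definition of convergence directly. By the remark following that definition I may choose representatives $\infgen{g_{n,i}}_i \in [H_n]$ so that $M_n = 1$; then for \emph{any} choice of integers $k_n \geq 1$, the subgroup $\infgen{g_{n, k_n}}_n$ lies in $[H]$. Using the characterization of wedge points stated just before Proposition \ref{bijective} applied to $\infgen{g_{n,i}}_i$ (whose wedge point is $x_n$), for each large $n$ I may pick $k_n$ so that $g_{n, k_n}$ has a loop representative contained in $\int(D^*) \cap X$. Since $\infgen{g_{n, k_n}}_n \in [H]$ has wedge point $x$, the same characterization, applied now to these diagonal generators, yields some $N$ such that for every $n > N$ the class $g_{n, k_n}$ also has a loop representative contained in $\int(D) \cap X$.

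For each such $n > N$, the loop $g_{n, k_n}$ is then freely homotopable between loops sitting in the disjoint disks $D$ and $D^*$ whose boundaries avoid $X$. Lemma \ref{homotoped off} therefore forces $g_{n, k_n}$ to be null-homotopic for all but finitely many $n$. But then $\infgen{g_{n, k_n}}_n$ is generated, in the sense of infinite products, by only finitely many nontrivial elements; such a subgroup is finitely generated and hence countable, contradicting the fact that a pseudo-Hawaiian earring subgroup is by definition uncountable. The main obstacle I anticipate is setting up the disks $D, D^*$ with the required boundary condition so that Lemma \ref{homotoped off} can be invoked uniformly in $n$; once that is fixed, the argument is a clean diagonalization that leverages both the compactness of $X$ (to produce $x^*$) and the uncountability built into the definition of pseudo-Hawaiian earring subgroup (to force the final contradiction).
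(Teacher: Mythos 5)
Your argument follows the same route as the paper's: assume $x_n \not\to x$, extract a subsequence of wedge points converging to some $x^* \neq x$, surround $x$ and $x^*$ by disjoint disks whose boundaries are not contained in $X$, form a diagonal subgroup $\infgen{g_{n,k_n}}_n$ lying in $[H]$, and contradict Lemma \ref{homotoped off}. Two points need repair, the first of which is substantive. Your parenthetical claim that passing to a subsequence of $\{[H_n]\}$ does not affect convergence to $[H]$ is not justified by the definition: the diagonal subgroup $\infgen{g_{n_j,k_{n_j}}}_j$ built over a subsequence is only a sub-infinite-product of a member of $[H]$, and to conclude it lies in $[H]$ you need it to be a pseudo-Hawaiian earring subgroup, i.e.\ an \emph{uncountable} image of $\mathbb H$ --- which is not automatic for a sub-product (a homomorphic image of $\mathbb H$ generated by infinitely many of the $\phi(a_i)$ can a priori be small). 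The paper sidesteps this entirely by never discarding indices: it sets $M'_k = 0$ for $k$ off the subsequence, chooses $k(n) > \max\{M'_n, M_n\}$ for \emph{every} $n$, so that $\infgen{g_{n,k(n)}}_n \in [H]$ by the definition of convergence applied to the full sequence, and then derives the contradiction only at the subsequence indices $n_i$. You should restructure your diagonalization the same way; as written, the step ``$\infgen{g_{n,k_n}}_n \in [H]$'' rests on an unproved assertion.

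The second point is minor: your justification for the existence of the disks $D$, $D^*$ (``generic radii avoid the compact planar set $X$ outside a set of isolated points'') is not correct --- the boundary circle need only fail to be \emph{contained} in $X$, and even that is not a genericity statement, since $X$ could contain entire annuli. The right reference is Lemma \ref{alem2}, which applies because $x$ and $x^*$ both lie in the closed set $B(X)$. On the positive side, your closing observation --- that if all but finitely many $g_{n,k_n}$ were null-homotopic, the diagonal subgroup could not be an uncountable (pseudo-Hawaiian earring) subgroup of $[H]$ --- makes explicit why the free homotopy between the two disks is genuinely contradictory even when individual generators might be inessential, a point the paper's proof leaves implicit.
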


\begin{proof}
Suppose that $[H_n]$ \emph{converges} to $[H]$ a pseudo-Hawaiian
earring subgroup equivalence class.  Then there exists a choice
of representatives $\infgen{g_{n,i}}_i\in[H_n]$ and $\{M_n\}$
such that if $k_n\geq M_n$, then $\infgen {g_{n,k_n}}_n\in[H]$.

We will proceed by way of contradiction.  Let $x_n$ be be the
wedge point for $[H_n]$.  Suppose that $x_n \not\to x$. Then
there is a subsequence $x_{n_i}$ with the property that $x_{n_i}
\to x' \neq x$.  Fix $A, A'$ disjoint open disk neighborhoods of
$x$ and $x'$ respectively, whose boundaries are not contained in
$X$. Then we may choose $\{M'_{n_i}\}_i$ and $N'$ such that
$g_{n_i,k_i}$ can be freely homotoped into $A'$ for $n_i>N'$ and
$k_i>M'_{n_i}$.  Let $M'_k = 0$ if $k \neq n_i$ for any $i$.

Fix $g_{n,k(n)}$ such that $k(n)> \max\{M'_{n}, \ M_{n}\}$.  Then
$\infgen {g_{n,k(n)}}_n\in[H]$ and there exists an $N$ such that,
for $n> N$, $g_{n,k(n)}$ can be freely homotoped into $A$.  Then for $n_i
>\max\{N',N\}$ $g_{n_i,k(n_i)}$ can be freely  homotoped into $A$ and $A'$, a
contradiction.

\end{proof}

The uniqueness of the limit of a convergent sequence then follows
by considering the corresponding sequence of wedge points.

Let $\overline{\mathcal{H}}$ be the set of equivalency classes of
pseudo-Hawaiian earring subgroups.  We will now construct a topology on
$\overline{\mathcal{H}}$.

\begin{defn}We will say that $A\subset\overline{\mathcal{H}}$ is \emph{closed} if it contains all of its limit points.  Then a
set is \emph{open} in $\overline{\mathcal H}$ if its complement is
closed.
\end{defn}

It is an easy exercise to show that this defines a topology on
$\overline{\mathcal H}$.

Let $\overline \Upsilon:\overline{\hes}\to
B(X)$ by $\overline\Upsilon([H]) = \Upsilon(H)$.  This is well defined by Lemma \ref{onlyifwedge} and a bijection by Proposition \ref{bijective}.

Proposition \ref{limitpoint} is exactly what is required for $\overline\Upsilon$ to be continuous.  Suppose $C\subset B(X)$ which is closed.  Consider $\{[H_n]\}\subset \overline\Upsilon^{-1}(C)$ which converges to $[H]$.  If $x_n$ is the wedge point of $[H_n]$, then $x_n$ is in $C$  and $x_n$ must converge to $x$ the wedge point of $[H]$.  Thus $x\in C$ and $[H]\in\overline\Upsilon^{-1}(C)$.

The next lemma will give the continuity of $\overline\Upsilon^{-1}$ immediately since $X$ is metric.

\begin{lem}
If $x_n\in B(X)$ and and $x_n\to x$, then the corresponding
pseudo-Hawaiian Earring subgroup equivalency classes converge.
\end{lem}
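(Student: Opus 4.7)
Since semilocal simple connectedness propagates from a point to a small path-connected neighborhood (using local path-connectedness of $X$), $B(X)$ is closed, so $x\in B(X)$; let $[H]$ be the unique equivalence class with wedge point $x$ guaranteed by Lemma \ref{sur} and Proposition \ref{bijective}. This will be the limit. Using local path-connectedness of $X$, choose paths $\delta_n:I\to X$ from $x$ to $x_n$ with $\diam(\delta_n)\to 0$, fix a path $\alpha$ from $x_0$ to $x$, and set $\alpha_n=\alpha*\delta_n$. For each $n$, pick a continuous map $f_n:(\textbf{E},0)\to(X,x_n)$ realizing $x_n$ as the wedge point of some representative $H_n'\in[H_n]$, so that $\widehat{\overline{\alpha_n}}\circ(f_n)_*$ surjects onto $H_n'$. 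By the continuity of $f_n$ and the remark following the definition of convergence (which permits dropping finitely many $\mathbf{a}_i$'s from each $f_n$ without leaving $[H_n]$), I may assume $\diam(f_n(\mathbf{a}_i))<1/n$ for every $i$. Setting $g_{n,i}=[\alpha_n*f_n(\mathbf{a}_i)*\overline{\alpha_n}]$, these are the images of the canonical generators under the surjection above, so $H_n'=\infgen{g_{n,i}}$ in the infinite product sense.

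\textbf{The diagonal map.} For any selection $\{k_n\}$, define $F:(\textbf{E},0)\to(X,x)$ by $F(\mathbf{a}_n)=\delta_n*f_n(\mathbf{a}_{k_n})*\overline{\delta_n}$. Each $F(\mathbf{a}_n)$ is a loop at $x$ with diameter at most $2\diam(\delta_n)+1/n\to 0$, so $F$ extends continuously at $0$ and hence is continuous. A direct computation gives
\[
\widehat{\overline{\alpha}}\bigl(F_*(a_n)\bigr)=[\alpha*\delta_n*f_n(\mathbf{a}_{k_n})*\overline{\delta_n}*\overline{\alpha}]=[\alpha_n*f_n(\mathbf{a}_{k_n})*\overline{\alpha_n}]=g_{n,k_n},
\]
so $\widehat{\overline{\alpha}}(F_*(\mb H))=\infgen{g_{n,k_n}}_n$. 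Because $F(0)=x$ and $\overline\alpha$ runs from $x$ to $x_0$, this subgroup has $x$ as its wedge point, so by Proposition \ref{bijective} it lies in $[H]$. With $M_n=1$, this verifies the defining condition for $[H_n]\to[H]$.

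\textbf{The main obstacle.} The subtle point is to ensure that $\infgen{g_{n,k_n}}_n$ is a genuine \emph{pseudo-Hawaiian} earring subgroup for \emph{every} diagonal selection $\{k_n\}$, i.e.\ that $\widehat{\overline{\alpha}}\circ F_*$ always has uncountable image; the argument above otherwise only produces a homomorphic image of $\mb H$. I expect this to be handled by refining the choice of each $f_n$: because $x_n\in B(X)$ supplies essential loops of arbitrarily small diameter near $x_n$, one can arrange the generators $\{f_n(\mathbf{a}_i)\}_i$ to be ``independent'' in a suitable sense---for example, so that for each $n$ and each sufficiently large $i$ the loop $f_n(\mathbf{a}_i)$ cannot be homotoped into a preassigned small neighborhood disjoint from itself (invoking Lemma \ref{homotoped off} as in the earlier wedge-point arguments). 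This independence forces any diagonal selection to yield an uncountable subgroup, completing the proof and, together with Proposition \ref{limitpoint} and metrizability of $X$, establishing the continuity of $\overline\Upsilon^{-1}$.
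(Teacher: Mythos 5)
Your proof follows essentially the same route as the paper's: connect $x$ to the $x_n$ by a null sequence of paths, conjugate the generators of representatives of $[H_n]$ back to the base point, and observe that any diagonal selection yields a continuous map of $\textbf{E}$ wedged at $x$, hence a subgroup lying in $[H]$. The uncountability issue you flag as the ``main obstacle'' is a real point, but the paper leaves it equally implicit (it merely takes each $f_n$ to induce an injective homomorphism), and your sketched resolution via Lemma~\ref{homotoped off} is consistent with how the paper handles such questions elsewhere.
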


\begin{proof}
Let $H_n$ be a pseudo-Hawaiian Earring subgroup of $\pi_1(X,x_0)$ with wedge point $x_n$.  Then there exists a continuous function
$f_n: (\textbf E, 0)\to (X, x_n)$ which induces an injective
homomorphism.  There exists a path $T:I\to X$ with the property
that $T(0)= x$, $T(1)= x_1$, and $T(\frac1n)= x_n$.  Let
$\alpha:I\to X$ be any path from $x$ to $x_0$.  Let $\beta_n$ be
a parametrization of the path $\alpha* T\bigr|_{[0,\frac1n]}$.
Then $\infgen{\overline{\beta_n}*f_n(\mathrm{a}_i)*\beta_n}_i$ has wedge
point $x_n$.  Let $g_{n,i} =
\overline{\beta_n}*f_n(\mathrm{a}_i)*\beta_n$.  There exists $M_n$ such
that $f_n(\mathrm{a}_i)\subset B_\frac1n(x_n)$ for $i> M_N$.  If $k\geq
M_n$ for each $n$, then $\infgen {g_{n,k}}_n\in[H]$.
\end{proof}

Therefore we have know proved the following result.

\begin{thm}\label{bad}
The function $\overline \Upsilon:\overline{\hes}\to B(X)$ is a
homeomorphism, i.e. the homeomorphism type of $B(X)$ is completely determined by $\pi_1(X,x_0)$.
\end{thm}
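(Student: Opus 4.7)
The plan is to assemble the ingredients already established in Section \ref{sec3}; the proof of Theorem \ref{bad} amounts to verifying that $\overline\Upsilon$ is a well-defined bijection whose inverse is continuous. Well-definedness follows from Lemma \ref{onlyifwedge}: similar pseudo-Hawaiian earring subgroups share a common wedge point, so $\overline\Upsilon$ depends only on the equivalence class. Surjectivity is Lemma \ref{sur}, and injectivity is the content of Proposition \ref{bijective}, which identifies $\Upsilon^{-1}(x)$ with a single equivalence class $[H]$. Hence $\overline\Upsilon$ is a bijection.

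For continuity of $\overline\Upsilon$, I would work directly from the sequential definition of closed sets on $\overline\hes$. Let $C\subset B(X)$ be closed and suppose $\{[H_n]\}\subset \overline\Upsilon^{-1}(C)$ converges to $[H]$. Proposition \ref{limitpoint} says that the wedge points $x_n$ of $[H_n]$ converge in the metric space $X$ to the wedge point $x$ of $[H]$. Since each $x_n\in C$ and $C$ is closed, $x\in C$ and so $[H]\in\overline\Upsilon^{-1}(C)$. Thus $\overline\Upsilon^{-1}(C)$ is sequentially closed, hence closed in $\overline\hes$.

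For continuity of $\overline\Upsilon^{-1}$, I would use that $B(X)$ is a subspace of the metric space $X$, so sequential continuity suffices. This is precisely the content of the final lemma preceding the theorem: given $x_n\to x$ in $B(X)$, one explicitly constructs pseudo-Hawaiian earring subgroups $H_n$ with wedge point $x_n$ using null-sequence generators of the form $g_{n,i}=\overline{\beta_n}*f_n(\mathbf{a}_i)*\beta_n$ and chooses thresholds $M_n$ so that, for any diagonal selection $k_n\geq M_n$, the resulting subgroup $\infgen{g_{n,k_n}}_n$ lies in the equivalence class $[H]$ with wedge point $x$. By the definition of convergence in $\overline\hes$, this says exactly that $[H_n]\to [H]$, so $\overline\Upsilon^{-1}$ is sequentially continuous.

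The only genuine subtlety is ensuring that the prescription of closed sets via sequential limits really gives a topology for which sequential continuity of maps out of $\overline\hes$ equals continuity; this was folded into the preceding discussion where the topology was defined, and no further work is required. With the bijection and both continuities in hand, $\overline\Upsilon$ is a homeomorphism, so the homeomorphism type of $B(X)$ is determined by the algebraic data $\pi_1(X,x_0)$ together with its equivalence relation $\sim$ on pseudo-Hawaiian earring subgroups. The main conceptual obstacle was not in this final assembly step but in the preparatory Proposition \ref{limitpoint}, whose proof relies on Lemma \ref{homotoped off} to rule out a runaway limit point; once that is in place, the theorem follows by combining the pieces.
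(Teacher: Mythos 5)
Your proposal is correct and follows essentially the same route as the paper: bijectivity from Lemma \ref{sur}, Lemma \ref{onlyifwedge}, and Proposition \ref{bijective}; continuity of $\overline\Upsilon$ from Proposition \ref{limitpoint} together with the sequential definition of closed sets in $\overline{\hes}$; and continuity of $\overline\Upsilon^{-1}$ from the final lemma via metrizability of $B(X)$. No substantive differences to report.
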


\renewcommand{\thethm}{A\arabic{thm}}
 \setcounter{thm}{0}
\section*{APPENDIX}

\begin{lem}
Every non-degenerate null-homotopic loop in a one-dimensional space contains a proper non-degenerate subloop which is also null-homotopic.
\end{lem}

\begin{proof}
Suppose that $f: S^1 \to X$ is a non-degenerate null-homotopic loop in a one-dimensional space $X$.  Then $f$ factors through a map $f': S^1 \to D$ where $D$ is a one-dimensional planar dendrite (see \cite[Theorem 3.7]{cc3}).  Since $f(S^1)$ is non-degenerate, so is $D$ and there exists $x\in D$ such that $D\backslash\{x\}$ has at least two components.  Hence $S^{-1}\backslash \{f^{'-1}(x)\}$ has at least two components.  Let $A$ be the closure of a component of $S^{-1}\backslash \{f^{'-1}(x)\}$.  Then $f|_A$ is a proper null-homotopic subpath of $f$.  Since $f'|_A$ maps onto the closure of a component of $D\backslash\{x\}$ and the components of $D\backslash\{x\}$ are non-degenerate, $f|_A$ is non-degenerate.
\end{proof}

\begin{lem}
Suppose that $f:[0,3]\to X$ is a path into a one-dimensional space $X$ such that $f|_{[i,i+2]}$ is a reduced path for $i\in 0,1$.  If $f_{[1,2]}$ is a non-degenerate then $f$ is reduced.
\end{lem}

\begin{proof}

Suppose $f|_{[s,t]}$ is a non-degenerate null-homotopic subloop of $f$.  Since $f|_{[i,i+2]}$ is a reduced path for $i\in 0,1$; $s\leq 1<2\leq t$.

If $s_1\leq s_2\leq 1<2\leq t_1\leq t_2$ and $f|_{[s_i,t_i]}$ is a non-degenerate null-homotopic subloop of $f$, then $f|_{[s_2,t_1]}$ is a non-degenerate null-homotopic subloop of $f$.

Suppose that $s_1\leq s_2\leq \cdots \leq 1<2\leq \cdots \leq t_2 \leq t_1$.  Let $s = \lim s_i$ and $t = \lim t_i$. It is not hard to see that $f|_{[s,t]}$ is also a non-degenerate null-homotopic subloop of $f$ (for details see the proof of Theorem 3.9 in \cite{cc3}).

Then there exists a unique minimal interval $[s_0, t_0]$ such that $f|_{[s_0,t_0]}$ is a non-degenerate null-homotopic subloop of $f$.  But then $f$ contains no a proper non-degenerate null-homotopic subloops which contradicts the previous lemma.

\end{proof}

Greg Conner and Mark Meilstrup proved the following lemma in \cite{precm}.

\begin{lem} {\label{aalem1} Let $H$ be a function from the first-countable space
$X\times Y$ into $Z$.  Let $\{C_i\}$ be a null sequence of closed
sets whose union is $X$.  Suppose that $\{D_i= H(C_i\times Y)\}$ is
a null sequence of sets in $Z$ and $H$ is continuous on each
$C_i\times Y$.  If for every subsequence $C_{i_k}\to x_0$ there
exists a $z_0\in Z$ such that $D_{i_k} \to z_0$ then $H$ is
continuous on $X\times Y$.}\end{lem}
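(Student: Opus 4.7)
The plan is to verify sequential continuity at an arbitrary point $(x,y)$, since $X\times Y$ is first-countable. I will take a sequence $(x_n,y_n)\to(x,y)$ and, for each $n$, choose an index $i(n)$ with $x_n\in C_{i(n)}$ (possible since $\bigcup_i C_i=X$). By passing to a subsequence, I may assume one of two scenarios: (a) there is a fixed index $j$ with $x_n\in C_j$ for all $n$, or (b) $i(n)\to\infty$.

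In scenario (a), closedness of $C_j$ forces $x\in C_j$, and the continuity of $H$ on $C_j\times Y$ yields $H(x_n,y_n)\to H(x,y)$ directly. In scenario (b), the null property of $\{C_i\}$ gives $\operatorname{diam}(C_{i(n)})\to 0$; together with $x_n\in C_{i(n)}$ and $x_n\to x$, this implies that any point of $C_{i(n)}$ lies within distance $\operatorname{diam}(C_{i(n)})+d(x_n,x)$ of $x$, so $C_{i(n)}$ Hausdorff-converges to $\{x\}$. The hypothesis then supplies a $z_0\in Z$ with $D_{i(n)}\to z_0$, and since $H(x_n,y_n)\in D_{i(n)}$ while $\operatorname{diam}(D_{i(n)})\to 0$, I conclude $H(x_n,y_n)\to z_0$.

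The remaining step, which is the main obstacle, is to identify $z_0$ with $H(x,y)$. I would argue as follows: pick any $j$ with $x\in C_j$, so that $H(x,y)\in D_j$. If in addition $x$ lies in $C_{i(n)}$ for infinitely many $n$, then $H(x,y)\in D_{i(n)}$ for those $n$, and together with $\operatorname{diam}(D_{i(n)})\to 0$ and $D_{i(n)}\to z_0$ this forces $H(x,y)=z_0$. The delicate situation is when $x$ fails to belong to $C_{i(n)}$ for all large $n$; here I plan to exploit the full strength of the hypothesis by interleaving the subsequence $(C_{i(n)})$ with any other subsequence of $\{C_i\}$ Hausdorff-converging to $\{x\}$. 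The resulting uniqueness of the limit $z_0$ across all such convergent subsequences, combined with the continuity of $H|_{C_j\times Y}$ applied along the comparison sequence $(x, y_n)\to(x,y)$, pins down $z_0 = H(x,y)$.

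Putting the two cases together shows that every subsequence of $H(x_n,y_n)$ admits a further subsequence converging to $H(x,y)$, yielding the desired continuity. The technical heart of the proof is the last identification $z_0=H(x,y)$, since this is where one must convert the purely set-theoretic hypothesis on convergent subsequences of $\{C_i\}$ into a pointwise comparison involving the actual value $H(x,y)$; everything else is a standard dichotomy plus null-sequence bookkeeping.
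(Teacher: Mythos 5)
Your overall strategy --- reduce to sequential continuity, split according to whether the chosen indices $i(n)$ repeat a fixed value infinitely often or tend to infinity, and in the second case squeeze $H(x_n,y_n)$ between the null sets $D_{i(n)}$ and their limit $z_0$ --- is exactly the paper's argument. The genuine problem is the step you yourself flag as the ``main obstacle'': identifying $z_0$ with $H(x,y)$. Your proposed fix does not work. Interleaving the subsequence $(C_{i(n)})$ with a fixed $C_j$ containing $x$ does not produce a subsequence in the sense of the hypothesis (the indices are neither increasing nor distinct), and the constant sequence $C_j, C_j, \dots$ does not converge to $\{x\}$ unless $C_j$ is a singleton; moreover the comparison points $H(x,y_n)$ furnished by continuity of $H|_{C_j\times Y}$ lie in $D_j$, not in $D_{i(n)}$, so they say nothing about $z_0$ except in the easy case $x\in C_{i(n)}$ for infinitely many $n$, which you already disposed of. In fact the identification cannot be extracted from the stated hypotheses at all: take $X=\{0\}\cup\{1/n : n\geq 1\}$, $Y$ a single point, $C_0=\{0\}$, $C_n=\{1/n\}$, $H(0)=0$ and $H(1/n)=1$. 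Every hypothesis of the lemma holds (each infinite subsequence of the $C_n$ that converges converges to $0$, and the corresponding $D$'s converge to $z_0=1$), yet $H$ is discontinuous at $0$.

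To be fair, the paper's own proof is no better on this point: it simply writes $D_{i_n}\to z_0 = H(x_0,y_0)$ with no justification. The lemma is really being used with an implicit extra hypothesis, namely that the limit $z_0$ attached to a subsequence $C_{i_k}\to x_0$ is $H(x_0,y)$ for every $y$ (equivalently, that $H(\{x_0\}\times Y)$ is a single point lying in the limit of the $D_{i_k}$). In the intended application (Lemma \ref{kreduced}) this is automatic because $g(c_{i_k})\in D_{i_k}$ and $g(c_{i_k})\to g(x_0)=H(x_0,y)$ for all $y$. So the honest repair is to add that hypothesis and verify it where the lemma is invoked, rather than to try to derive the identification from the statement as written.
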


\begin{proof}
Consider a sequence $(x_n, y_n) \to (x_0,y_0)$.  For each $n$,
choose an $i_n$ such that $x_n\in C_{i_N}$.  If $\{C_{i_n}\}$ is
finite then by restricting $H$ to $\cup_n C_{i_n}\times Y$ we have
$H(x_n,y_n) \to H(x_0,y_0)$ be a finite application of the pasting
lemma.  If $\{C_{i_n}\}$ is infinite, then since $\{C_i\}$ is a null
sequence and $x_n\in C_{n_i}$, we have $C_{i_n}\to x_0$ and thus
$H(x_n,y_n)\in D_{i_n}\to z_0= H(x_0,y_o)$.  Thus $H$ is continuous
on all of $X\times  Y$.
\end{proof}

\begin{lem}\label{alem1}
If $A$ is an annulus and $C$ is a closed subset of $\int A$ that
separates the boundary components $J_1$ and $J_2$ of $A$, then
some component of $C$ separates $J_1$ from $J_2$ in $A$.
\end{lem}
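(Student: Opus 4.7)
The plan is to reduce the statement to the classical theorem that in a locally connected metric continuum, a closed set which separates two points must contain a single connected component separating those same two points. Once such a component is produced, the fact that it lies in $\int(A)$ together with connectedness of $J_1$ and $J_2$ immediately upgrades point-pair separation to separation of the entire boundary circles.

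First I would fix arbitrary points $p\in J_1$ and $q\in J_2$. Reading $C$ as a compact subset of $\int(A)$ (replacing $C$ by its closure in $A$ if necessary and observing that for the statement to have content this closure should lie in $\int(A)$), the hypothesis that $C$ separates $J_1$ from $J_2$ in $A$ supplies a decomposition $A\setminus C=U_1\sqcup U_2$ into disjoint nonempty open sets with $J_i\subset U_i$. In particular $C$ separates $p$ from $q$ in $A$.

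Next, since the annulus $A$ is a locally connected metric continuum, I would invoke the classical separation theorem (see e.g.\ Kuratowski, \emph{Topology} II, \S57): if a closed subset of a locally connected metric continuum separates two of its points, then some connected component of that subset already separates the two points. Applied to $C$, $p$, $q$ this produces a component $C_0$ of $C$ and a decomposition $A\setminus C_0=V_1\sqcup V_2$ into disjoint open sets with $p\in V_1$ and $q\in V_2$.

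Finally, since $C_0\subset C\subset\int(A)$, the component $C_0$ is disjoint from $J_1\cup J_2$, so $J_1\cup J_2\subset V_1\cup V_2$. Connectedness of $J_1$ together with $p\in J_1\cap V_1$ forces $J_1\subset V_1$, and symmetrically $J_2\subset V_2$. Hence $C_0$ separates $J_1$ from $J_2$ in $A$, which is the desired conclusion. The principal obstacle is the invocation of the classical separation theorem; a self-contained argument would proceed either through Phragm\'en--Brouwer type reasoning in the plane (available since $A$ embeds in $\mathbb R^2$) or through a quasi-component analysis exploiting local connectedness of $A$ to identify quasi-components of $C$ with its connected components.
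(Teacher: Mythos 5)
Your overall strategy---reduce to separating a point $p\in J_1$ from a point $q\in J_2$, extract a single separating component, and then use connectedness of $J_1$ and $J_2$ to upgrade point separation back to separation of the boundary circles---is the right one, and the final upgrading step is fine. The gap is in the central invocation: the classical theorem you quote is \emph{false} for general locally connected metric continua. It requires unicoherence (equivalently, the Phragm\'en--Brouwer property), which is precisely what the paper's one-line citation is pointing at, and the annulus is not unicoherent. The circle already shows the statement you wrote cannot be right (a two-point set separates $\mathbb{S}^1$, but no component of it does), and in the annulus itself the union of two disjoint arcs, each joining $J_1$ to $J_2$, separates $A$ into two pieces while neither arc alone separates; so no component of that closed set separates the two points it separates. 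Applying the theorem directly to $A$ is therefore unjustified as written.

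The standard repair is to pass to a unicoherent ambient space first: cap off $J_1$ and $J_2$ with disks $D_1\ni p$ and $D_2\ni q$ to realize $A$ inside $\mathbb{S}^2$. Your decomposition $A\setminus C=U_1\sqcup U_2$ with $J_i\subset U_i$ then gives the decomposition of $\mathbb{S}^2\setminus C$ into $U_1\cup\int(D_1)$, $U_2\cup\int(D_2)$, and the remaining components, so $C$ separates $p$ from $q$ in $\mathbb{S}^2$. Since $\mathbb{S}^2$ is a locally connected \emph{unicoherent} continuum, the component-separation theorem applies there and yields a component $C_0$ of $C$ with $\mathbb{S}^2\setminus C_0=V_1\sqcup V_2$, $p\in V_1$, $q\in V_2$. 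As $C_0\subset\int(A)$ misses $D_1\cup D_2$, connectedness of $D_i$ forces $J_i\subset V_i$, and intersecting with $A$ gives the desired separation of $J_1$ from $J_2$ in $A$. (Two smaller points: a closed subset of $\int(A)$ need not be compact, so your parenthetical about replacing $C$ by its closure needs the observation that in the paper's application $C$ is already closed in $A$; and the theorem you want is Kuratowski's separation theorem for unicoherent, locally connected continua, not for locally connected continua in general.) With these changes your argument is complete and agrees in substance with the paper's appeal to Phragm\'en--Brouwer properties.
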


This is a consequence of Phragm$\acute{e}$n-Brouwer properties. (see \cite{hw}.)

Let $B_r^X(x) = \{ y\in X \ | \ d(x,y)<r\}$ and $S_r^X(x)= \{ y\in X \ |\ d(x,y) = r\}$.  If $X$ is a planar set then $B_r^X(x) = B_r^\mathbb R(x)\cap X$ and $S_r^X(x) = S_r^\mathbb R(x)\cap X$.

\begin{lem}\label{alem2}
If $X$ is a planar set which is not semilocally simple connected at $x$, then there exists arbitrarily small $r$ such that $S_r^\mathbb R(x)\not\subset X$.
\end{lem}

\begin{proof}
Suppose that there exists an $\epsilon>0$ such that for all $r<\epsilon$, $S_r^\mathbb R(x)\subset X$.  Then $B^\mathbb R_\epsilon(x)\subset X$ which implies that $B^\mathbb R_\epsilon(x) = B^X_\epsilon(x)$ and thus $X$ is simply connected at $x$.
\end{proof}

\begin{lem}\label{homotoped off}
Let $X$ be a subset of $\mb R^2$ and $U$ a closed disk in the plane whose boundary is not contained in $X$. If $\alpha$ is an essential loop contained in $X\cap \int(U)$, then $\alpha$ cannot be freely homotoped out of $U\cap X$.
\end{lem}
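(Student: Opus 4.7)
My plan is to argue by contradiction via winding numbers. Suppose there is a free homotopy $H: S^{1}\times [0,1]\to X$ with $H(\cdot,0)=\alpha$ and $\beta := H(\cdot,1)$ entirely disjoint from $U$. Because $\partial U\not\subset X$, I may pick a point $p\in\partial U\setminus X$. Since $H$ takes values in $X\subset \mb R^{2}\setminus \{p\}$ and $\mb R^{2}\setminus \{p\}$ is homotopy equivalent to $S^{1}$, the winding number about $p$ is a free-homotopy invariant, so $w(\alpha,p)=w(\beta,p)$.

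Next I would compute both windings. The inclusion $\alpha\subset \int(U)$ together with convexity of $U$ forces the bounded component of $\mb R^{2}\setminus \alpha$ to lie inside $U$, so the point $p\in\partial U$ sits in the unbounded component and $w(\alpha,p)=0$. For $\beta$, since $U$ is a connected set disjoint from $\beta$, the closed disk $U$ lies in a single component of $\mb R^{2}\setminus \beta$; local constancy of the winding number then gives $w(\beta,q)=w(\beta,p)=0$ for every $q\in U$. A parallel case analysis now shows $w(\alpha,q)=0$ for every $q\in \mb R^{2}\setminus X$: if $q\in U$, then $H$ avoids $q$ because $q\notin X$, so homotopy invariance together with the previous calculation gives $w(\alpha,q)=w(\beta,q)=0$; and if $q\notin U$, then $q$ lies in the unbounded region of $\mb R^{2}\setminus \alpha$ since $\alpha\subset \int(U)$, so again $w(\alpha,q)=0$.

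The final step is to derive a contradiction from the assumption that $\alpha$ is essential. For a loop in a planar Peano continuum (the setting in which this lemma is applied throughout the paper), being essential is equivalent to having nontrivial winding number around some point of the complement, so the vanishing of all these winding numbers forces $\alpha$ to be null-homotopic in $X$. I expect this last invocation to be the main obstacle to writing out a fully self-contained proof: it uses the full strength of the planar Peano hypothesis (where one can cite the disk-filling criterion of Cannon--Conner recalled in the proof of Lemma \ref{kreduced}), whereas the winding-number bookkeeping in the preceding paragraphs is entirely elementary and uses only that $U$ is a convex disk and that $p\in\partial U\setminus X$.
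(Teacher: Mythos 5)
There is a genuine gap, and it is exactly at the step you flagged as the ``main obstacle'': the claim that for a loop in a planar set, being essential is equivalent to having nontrivial winding number about some point of the complement. Only one direction of that equivalence is true. A nullhomotopic loop has all winding numbers zero, but the converse fails badly: in a wedge of two circles embedded in the plane, the commutator loop $aba^{-1}b^{-1}$ is essential, yet its winding number about every point of the complement vanishes (winding numbers only see the image of $[\alpha]$ in $H_1(\mb R^2\setminus\{q\})\cong\mb Z$, i.e.\ an abelianized invariant). The Cannon--Conner disk-filling result you cite does not rescue this; it says that a loop whose image under the decomposition map $\pi_k$ is nullhomotopic bounds a disk --- a statement about collapsing line segments, not about winding numbers. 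Since the lemma is applied in the paper to loops such as images of the generators $\mathbf a_n$ under arbitrary continuous maps, which may well be commutator-like loops with all winding numbers zero, your argument gives no information in precisely the cases that matter. The winding-number bookkeeping in your first two paragraphs is correct but, for this reason, cannot be completed to a proof.

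The paper's proof takes a different and genuinely stronger route that you may want to internalize: it looks at the track of the free homotopy itself. If $h:A\to X$ is the homotopy on an annulus $A$ with $h(J_1)=\alpha$ inside $\int(U)$ and $h(J_2)$ outside $U$, then $h^{-1}(\partial U\cap X)$ separates $J_1$ from $J_2$, so by a Phragm\'en--Brouwer argument some single component $C$ of it separates them. Now the hypothesis $\partial U\not\subset X$ is used: $h(C)$ is a connected subset of $\partial U\cap X$, which is a proper subset of a circle, so $h(C)$ lies in an arc, which is contractible (indeed an absolute retract). Tietze extension then lets one cap off the subannulus between $J_1$ and $C$ to a map of a disk into $X$, showing $\alpha$ is nullhomotopic in $X$ --- a contradiction that works for an arbitrary subset $X$ of the plane and for essential loops of any homotopy type, not just those detected by winding numbers.
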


\begin{proof}
Let $X$ be a subset of $\mb R^2$ and $U$ a closed disk in the plane whose boundary is not contained in $X$. Suppose $\alpha$ is an essential loop contained in $X\cap \int(U)$ such that $\alpha$ can be homotoped out of $U\cap X$.

Let $A$ be an annulus with boundary components $J_1$ and $J_2$. Then there exists a map $h: A\to X$ which takes $J_1$ to $\alpha$ and $J_2$ to a loop in $X$ not intersecting $U\cap X$. Then $h^{-1}(\partial U\cap X)$ is a closed subset contained in the $\int A$ which separates $J_1$ from $J_2$ in $A$. Hence by Lemma \ref{alem1}, some component $C$ of $h^{-1}(\partial U\cap X)$ separates $J_1$ from $J_2$ in $A$.  Since $\partial U \not\subset X$, $h\bigr|_{C}$ maps to an arc in $X$.  Hence by
Tietze Extension Theorem, we may alter $h$ to a map of a disk into $X$.  Therefore $\alpha$ is null-homotopic, a clear contradiction.

\end{proof}

\begin{lem}\label{cutoff}
Suppose the $f:\partial \mathbb D \to X$ is a continuous map into a planar set.  If $f$ is null-homotopic then there exists a map $h: \mathbb D \to X$ such that $\diam {h(\mathbb D)}\leq 2\diam{f(\partial\mathbb D)}$.
\end{lem}

\begin{proof}
Let $x_0\in f(\partial\mathbb D)$ and $r_0 = \diam{f(\partial\mathbb D)}$.  Let $r_1= \inf\{ r\geq r_0 \ | S_r^\mathbb R(x_0) \not \subset X \}$.  Then $S_{r_1}^X(x)$ is the disjoint union of closed intervals (some of which are possible degenerate).  Let $h: \mathbb D \to X$ be a null homotopy of $f$.   Consider the components of $h^{-1}\bigr(S_{r_1}^X(x)\bigl)$.   Then as before we can cut $h$ off at $S_{r_1}^X(x)$.   Then the modified $h$ can be contracted into $B_{r_0}^X(x_0)$ which completes the proof.

\end{proof}

\begin{lem}\label{alem3}
If $x_n \to x$ in a Peano continuum, then there exists a path
$T:I\to X$ with the property that $T(0)= x_1$, $T(1)= x$, and
$T(1-\frac1n)= x_n$.
\end{lem}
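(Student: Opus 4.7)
The plan is to build $T$ as a concatenation of short paths $\gamma_n$ joining consecutive $x_n$ and $x_{n+1}$, reparametrized onto the interval $[1-\tfrac1n,\,1-\tfrac1{n+1}]$, and then to extend by $T(1)=x$. The essential input is the uniform local path connectedness of a Peano continuum: since $X$ is compact, connected, locally path connected and metric, for every $\varepsilon>0$ there is a $\delta>0$ such that any two points of $X$ at distance less than $\delta$ can be joined inside $X$ by a path of diameter less than $\varepsilon$. This is the standard consequence of local path connectedness on a compactum, and it is the feature that makes the tail of the construction converge uniformly to $x$.

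Using this, I would first choose, for each positive integer $k$, a modulus $\delta_k>0$ such that any two points within distance $\delta_k$ are joined by a path of diameter less than $1/k$. Since $x_n\to x$, I can choose an increasing sequence $N_1<N_2<\cdots$ with $d(x_n,x)<\delta_k/2$ (hence $d(x_n,x_{n+1})<\delta_k$) for every $n\geq N_k$. For each such $n\geq N_1$ pick a path $\gamma_n:[1-\tfrac1n,\,1-\tfrac1{n+1}]\to X$ from $x_n$ to $x_{n+1}$ whose diameter is less than $1/k$, where $k$ is the largest index with $N_k\leq n$. For the finitely many indices $n<N_1$, use path connectedness of $X$ (Peano continua are path connected) to pick any path $\gamma_n$ from $x_n$ to $x_{n+1}$ on the corresponding subinterval. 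Finally set $T(1)=x$.

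Continuity on $[0,1)$ follows from the pasting lemma, since the $\gamma_n$ agree at the common endpoints $1-\tfrac1n$. The only real step, and the one that actually uses the hypothesis $x_n\to x$, is continuity at $t=1$: given $\varepsilon>0$, choose $k$ with $1/k<\varepsilon/2$ and then $n_0\geq N_k$ so large that $d(x_m,x)<\varepsilon/2$ for all $m\geq n_0$. For any $t\in[1-\tfrac1{n_0},1]$ with $t\neq 1$, the point $T(t)$ lies on some $\gamma_m$ with $m\geq n_0$, and $\gamma_m$ has diameter less than $1/k$ and contains $x_m$, so $d(T(t),x)\leq d(T(t),x_m)+d(x_m,x)<\varepsilon$. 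I do not anticipate a real obstacle here; the only thing to be careful about is aligning the choice of $\delta_k$ (uniform local path connectedness) with the convergence rate of $x_n$, which is arranged by the indices $N_k$.
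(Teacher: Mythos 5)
Your proof is correct and takes essentially the same route as the paper's: both build $T$ by concatenating paths from $x_n$ to $x_{n+1}$ on the intervals $[1-\tfrac1n,\,1-\tfrac1{n+1}]$, set $T(1)=x$, and verify continuity at $t=1$ by forcing the tail paths to be small. The only difference is the smallness mechanism---you invoke uniform local path connectedness of a Peano continuum (a global modulus $\delta_k$), while the paper simply keeps each connecting path inside a nested basis $U_k\subset B_{\frac1k}(x)$ of path-connected neighborhoods of $x$---and this difference is cosmetic.
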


\begin{proof}
Let $U_n$ be a path connected neighborhood of $x$ contained in
$B_\frac1n(x)\cap U_{n-1}$.  Let $T_n:[1-\frac1n,
1-\frac1{n+1}]\to X$ be any path from $x_n$ to $x_{n+1}$ with
the property that if $x_n, x_{n+1}\in U_k$ then $T_n([1-\frac1n,
1-\frac1{n+1}])\subset U_k$. $T_n$ will exist since there is a
minimal $U_k$ containing both $x_n$ and $x_{n+1}$.  Then $T:I\to
X$ by $T(x) = T_n(x) $ for $x\in [1-\frac1n, 1-\frac1{n+1}]$ and
$T(1) =x$ is a continuous path.
\end{proof}


\begin{thebibliography}{HD}

%% Use the widest label as the parameter.

%% In IMPAN journals, only the title is italicized; boldface is not used.
%% The issue number is only given when the issues are paginated separately.

%%%%%%% To ease editing, use normal size:

\normalsize
\baselineskip=17pt

%%%%%%%%%%%%%%%


\bibitem[ADTW]{at}  S. Akiyama and G. Dorfer and J.M. Thuswaldner R. Winkler,
\emph{On the fundamental group of the {S}ierpi\'nski-gasket},
Topology Appl. 156 (2009), 1655--1672.

\bibitem[CC1]{cc1}  J.W. Cannon and G.R. Conner,
\emph{The combinatorial structure of the {H}awaiian earring group},
Topology Appl. 106 (2000), 225--271.

\bibitem[CC2]{cc3}  J.W. Cannon and G.R. Conner,
\emph{On the fundamental groups of one-dimensional spaces},
Topology Appl. 153 (2006), 2648--2672.

\bibitem[CC3]{cc4}  J.W. Cannon and G.R. Conner,
\emph{The homotopy dimension of codiscrete subsets of the 2-sphere {$\mathbb S\sp 2$}},
Fund. Math. 197 (2007), 35--66.

\bibitem[CCZ]{ccz}  J.W. Cannon and G.R. Conner and A. Zastrow,
\emph{One-dimensional sets and planar sets are aspherical},
Topology Appl. 120 (2002), 23--45.

\bibitem[CE]{ce}  G. Conner and K. Eda,
\emph{Fundamental groups having the whole information of spaces},
Topology Appl. 146/147 (2005), 317--328.

\bibitem[CM]{precm}  G. Conner and M. Meilstrup,
\emph{Deforestation of Peano continua and minimal deformation retracts}, 
Topology Appl. 159 (2012), 3253--3262.


\bibitem[CF]{cf}  M.L. Curtis and M.K. Fort,
\emph{Singular homology of one-dimensional spaces},
Ann. of Math. (2) 69 (1959), 309--313.

\bibitem[E1]{eda2}  Katsuya Eda,
\emph{Free {$\sigma$}-products and fundamental groups of subspaces of the plane},
Topology Appl. 84 (1998), 283--306.

\bibitem[E2]{eda}  Katsuya Eda,
\emph{The fundamental groups of one-dimensional spaces and spatial homomorphisms},
Topology Appl. 123 (2002), 479--505.

\bibitem[E3]{eda3}  Katsuya Eda,
\emph{Homotopy types of one-dimensional Peano continua},
Fund. Math. 209 (2010), 27--42.

\bibitem[H]{hig}  G. Higman,
\emph{Unrestricted free products, and varieties of topological groups},
J. London Math. Soc. 27 (1952), 73--81.

\bibitem[HW]{hw} W. Hurewicz and H. Wallman,
\emph{Dimension {T}heory},
Princeton Mathematical Series, v. 4, Princeton University Press, Princeton, N. J., 1941.

\bibitem[K]{ck}  C. Kent,
\emph{Homomorphisms into the fundamental group of one-dimensional and planar Peano continua}, Masters Thesis (2008).

\bibitem[K1]{ck1}C. Kent,
\emph{A note on homomorphims from the fundamental group of planar sets}, preprint.

bibitem[vM]{vM} J. van Mill,
\emph{Infinite-dimensional topology},
North-Holland Mathematical Library, North-Holland Publishing Co., Amsterdam, 1989.


\bibitem[dS]{smit}  B. de Smit,
\emph{The fundamental group of the {H}awaiian earring is not free},
Internat. J. Algebra Comput. 2 (1992), 33--37.


\bibitem[MM]{mm}  J. Morgan and I. Morrison,
\emph{A van {K}ampen theorem for weak joins},
Proc. London Math. Soc. (3) 53 (1986), 562--576.

\bibitem[S]{cs}  E. Summers,
\emph{There is no surjective map from the hawaiian earring group to the double hawaiian earring group}, Masters Thesis (2002).\



\end{thebibliography}
\end{document}